\numberwithin{equation}{section}
\newtheorem{theorem}{Theorem}[section]
\newtheorem*{theorem*}{Theorem}
\newtheorem{lemma}[theorem]{Lemma}
\newcommand{\DD}{\mathbb{D}}
\newtheorem{proposition}[theorem]{Proposition}
\newtheorem{corollary}[theorem]{Corollary}
\theoremstyle{definition}
\newtheorem{definition}[theorem]{Definition}
\newtheorem{remark}[theorem]{Remark}
\theoremstyle{definition}
\newtheorem{exmp}[theorem]{Example} 
\theoremstyle{definition}
\begin{document}

\title[\small DENJOY-WOLFF POINTS]{\large DENJOY-WOLFF POINTS ON THE bidisK VIA MODELS}

\author[\small M. T.  JURY]{MICHAEL T. JURY}
\address{DEPARTMENT OF MATHEMATICS, UNIVERSITY OF FLORIDA, GAINESVILLE, FL}
\email{mjury@ufl.edu} 
\author[\small G. TSIKALAS]{GEORGIOS TSIKALAS}
\address{DEPARTMENT OF MATHEMATICS AND STATISTICS, WASHINGTON UNIVERSITY IN ST. LOUIS, ST. LOUIS, MO}
\email{gtsikalas@wustl.edu} 
\thanks{Jury partially supported by National Science Foundation Grant DMS 2154494. Tsikalas partially supported by National Science Foundation Grant DMS 2054199 and by Onassis Foundation - Scholarship ID: F ZR 061-1/2022-2023.  }

\subjclass[2010]{Primary: 32H50; Secondary: 32A40, 32S05} 
\keywords{Denjoy-Wolff points, iteration, bidisk, Carath\'eodory condition, horosphere}
\small
\begin{abstract}
    \small
Let $F=(\phi, \psi):\DD^2\to\DD^2$ denote a holomorphic self-map of the bidisk without interior fixed points. It is well-known that, unlike the case with self-maps of the disk, the sequence of iterates $$\{F^n:=F\circ F\circ \cdots \circ F\}$$ needn't converge. The cluster set of $\{F^n\}$ was described in a classical 1954 paper of Herv\'{e}.  Motivated by Herv\'{e}'s work and the Hilbert space perspective of Agler, McCarthy and Young on boundary regularity, we propose a new approach to boundary points of Denjoy-Wolff type for the coordinate maps $\phi, \psi.$ We establish several equivalent descriptions of our Denjoy-Wolff points, some of which only involve checking specific directional derivatives and are particularly convenient for applications. Using these tools, we are able to refine Herv\'{e}'s theorem and show that, under the extra assumption of $\phi$ and $\psi$ possessing Denjoy-Wolff points with certain regularity properties, one can draw much stronger conclusions regarding the behavior of $\{F^n\}.$

\end{abstract}
\maketitle

\small
 \section{INTRODUCTION} \label{intro}
 \large 

Let $\DD$ denote the open unit disk. Given a holomorphic map $f:\DD\to\DD$ without fixed points, a theorem of Wolff \cite{Wolff3} states that there exists a boundary point $\tau\in\partial\DD$ such that every closed disk internally tangent to $\DD$ at $\tau$ (in other words, every horocycle containing $\tau$) is invariant under $f$. From this, one can deduce the classical Denjoy-Wolff Theorem \cite{Denjoy}, \cite{Wolff1}, \cite{Wolff2}: the sequence of iterates 
$$f^n:=\underbrace{f\circ f\circ \cdots \circ f}_\textrm{$n$ times}$$
converges to $\tau$ uniformly on compact subset of $\mathbb{D}.$ In this setting, the (unique) point $\tau$ is
termed the \textit{Denjoy-Wolff point} of $f$. See \cite{Burckel} for a nice exposition of the details and many historical remarks. \par 
A lot of work has been devoted to obtaining higher-dimensional generalizations of the Denjoy-Wolff Theorem. The first such result is due to Herv\'{e} \cite{Herveball}, who proved an exact analogue of the Denjoy-Wolff Theorem for fixed-point-free self-maps of the unit ball $\mathbb{B}_{n}\subset \mathbb{C}^{n}$ (see also \cite{Maccluer}). Later, Abate \cite{Abatestronglyconvex} (see also the excellent survey \cite{Abatebook}) achieved a generalization of this result to all smoothly bounded strongly convex domains in $\mathbb{C}^n,$ paving the way for further extensions to smoothly bounded pseudoconvex domains of both finite and infinite type (see \cite{TranVu} and the references therein). More recently, Budzy\'{n}ska \cite{BudCn} (see also \cite{Budcondensing} and \cite{BudDWtype}) showed that the smoothness assumption can be dropped if one restricts to strictly convex domains. \par 
Unfortunately, the situation becomes considerably more complicated in general bounded domains. The proofs of the above results utilize certain $f$-invariant domains (usually termed \textit{horospheres}, as they generalize Wolff's horocycles) which may have too large intersections with the boundary of the domain in the general case, making it difficult to control the behavior of the iterates. Indeed, even though several different types of horospheres have been considered in the literature with varying degrees of generality (see e.g. \cite{Abatebook}, \cite{AbateRaissynonsmoothconvex},  \cite{BudCn}, \cite{ChuRigbyhoroballs}, \cite{FrosiniBusemann}, \cite{Mellonfiniterank}, where the focus is either on bounded convex or bounded symmetric domains), boundary smoothness or extra convexity assumptions (or a mixture of both) are generally required to control the size of the intersection with the boundary. This is true even in very simple finite-dimensional domains, such the unit polydisk $\mathbb{D}^n,$ where the presence (for $n\ge 2$) of large ``flat" boundary components prevents the iterates from converging. In such a case, one seeks to understand the cluster points of $\{f^n\}.$ Although holomorphic dynamics on $\mathbb{D}^n$ (for general $n$) have been studied by a number of authors (see e.g. \cite{AbateJuliaWolff},\cite{AbateRaissynonsmoothconvex},  \cite{Valironabelpolydisc}, \cite{ChuRigbyHilbertballs}, \cite{FrosiniBusemann}, \cite{MellonDWboundesymmetric}), progress on iteration-theoretic questions remains limited. \par Somewhat stronger conclusions can be drawn if one restricts their attention to the bidisk. Let $F=(\phi, \psi): \DD^2\to\DD^2$ be holomorphic and without fixed points. The best known general results regarding the behavior of the iterates $\{F^n\}$ in this setting can be found in the classical paper \cite{HERVE} of Herv\'{e} (see also \cite{FrosiniBusemann}, \cite{FrosiniDynamics}, \cite{Nowell}, \cite{SolaTullydynamics} for more recent work concerning the bidisk). Herv\'{e} observed that all holomorphic maps $\phi: \DD^2\to\DD$ (that are not coordinate projections) can be classified into two separate categories (see Definition \ref{Type I, Type II}) based on the location of the Denjoy-Wolff points of the slice functions $\phi_{\mu}:\DD\to\DD$, where $\phi_{\mu}(\lambda)=\phi(\lambda, \mu)$ for all $\lambda, \mu\in\DD.$ He then gave a description of the cluster points of $\{F^n\}$ by considering three distinct cases (see Theorem \ref{HERVE's THEOREM}), depending on the categories that the coordinate functions $\phi$ and $\psi$ belong to. \cite{HERVE} also contains numerous examples demonstrating that, from a certain viewpoint, these results are optimal.
\par 
In the present work, motivated by the model-theoretic techniques of \cite{AMYcaratheodory} and \cite{Sarason}, we propose new definitions for Denjoy-Wolff-type points of holomorphic functions $\phi:\DD^2\to\DD$ (see Definition \ref{TypeI+IIdef}). These will be boundary points where $\phi$ satisfies a mild regularity condition (termed B-points following \cite{AMYcaratheodory}, see Section \ref{prelims} for definitions) and appropriate contractivity assumptions stated in terms of the model function. We prove several equivalent characterizations of our Denjoy-Wolff points, some of which are particularly easy to verify in practice and involve certain directional derivatives of $\phi$ at the points in question (see Theorems \ref{TypeIcharabridged}, \ref{TypeIIcharabridged} and \ref{ultimateDWchar}). This constitutes a departure from the usual criteria for Denjoy-Wolff points used in the setting of $\DD^2$, which depend on the existence of invariant horospheres. With these tools in our disposal, we are able to refine Herv\'{e}'s theorem. Among several results, we show that
if the coordinate functions $\phi$ and $\psi$ of $F$ possess certain Denjoy-Wolff points that are B- but not C-points (i.e. the functions do not have angular gradients there), then one gains much tighter control over the behavior of the iterates $\{F^n\}$(see Theorems \ref{TypeITypeIIBBUTNOTC} and \ref{TypeITypeIBBUTNOTC}). Roughly, this is because the structure of the model function at Denjoy-Wolff points that are not C-points allows one to deduce many different (contractive) versions of Julia's inequality there, thus increasing the supply of invariant horospheres available (see Corollaries  \ref{typeIhorosph} and \ref{TypeIIhorosph}). We also provide examples to illustrate the different cases contained in our theorems. \par 
The paper is arranged as follows. Section \ref{prelims} contains the necessary background on the notions of a model of a function, B-points and C-points and the main result of \cite{HERVE}. It also presents our new definitions of Denjoy-Wolff points and the main results of this paper. In Section \ref{generalBCdir}, we prove general results concerning the relation between the model function and certain directional derivatives at B-points, as well as a refined version of Julia's inequality for the bidisk (see Theorem \ref{Juliarevisited}). These will be much needed in the sequel but are also of independent interest. In Section \ref{charactDW}, we prove several equivalent characterizations of our Denjoy-Wolff points (see Theorems \ref{TypeIcharabridged}, \ref{TypeIIcharabridged} and \ref{ultimateDWchar}), uniqueness results (Propositions \ref{TypeIuniqueness} and \ref{TypeIIuniqueness}) and useful corollaries involving weighted Julia inequalities (Corollaries \ref{typeIhorosph} and \ref{TypeIIhorosph}). Next, in Section \ref{refineHerve}, we revisit Herv\'{e}'s Theorem and establish several partial refinements using our tools from the previous sections. These refinements include Theorems \ref{TypeITypeIIBBUTNOTC}, \ref{TypeITypeIBBUTNOTC} and \ref{motionin(II,II)}. We also provide relevant examples (see Examples \ref{exmp1} and \ref{exmp2}). Finally, in Section \ref{Frosinisection}, we discuss Frosini's work on Denjoy-Wolff-type points on the bidisk and show how our main results can be used to recover a theorem from \cite{FrosiniBusemann} on the classification of a certain type of these points. 

 \small
\section{BACKGROUND AND MAIN RESULTS} \label{prelims}
\large

 \small
\subsection{Models} 
\large 
 Let $\mathcal{S}$ and $\mathcal{S}_2$ denote the one- and two-variable \textit{Schur classes}, i.e. the sets of analytic functions on $\DD$ and $\DD^2$ respectively that are bounded
by $1$ in modulus. We require the notion of a model of a Schur-class function, as seen in \cite{AMYcaratheodory}. It is well known that every function in $\mathcal{S}_2$ possesses such a model, however this ceases to be the case in higher-dimensional polydisks.
\begin{definition} \label{modeldef}
Let $\phi\in\mathcal{S}_2.$ We say that $(M, u)$ is a {\em model} for $\phi$ if $M=M^1\oplus M^2$ is an orthogonally decomposed separable Hilbert space and $u:\DD^2\to M$ is an analytic map
such that, for all $\lambda=(\lambda^1, \lambda^2), \mu=(\mu^1, \mu^2)\in \DD^2,$
\begin{equation}\label{basicmodelform}
1-\phi(\lambda)\overline{\phi(\mu)}=(1-\lambda^1\overline{\mu^1})\langle u^1_{\lambda}, u^1_{\mu}\rangle+(1-\lambda^2\overline{\mu^2})\langle u^2_{\lambda}, u^2_{\mu}\rangle.\end{equation}
\end{definition}
In equation (\ref{modeldef}) we have written $u_{\lambda}$ for $u(\lambda), u^1(\lambda) = P_{M^1} u(\lambda),$ and $u^2(\lambda) = P_{M^2} u(\lambda)$. In general, given $v\in M,$ we will write $v^1$ for $P_{M^1}v$ and $v^2$ for $P_{M^2}v$. Note that we may suppose, without loss of generality, that $\{u^j(\lambda) : \lambda\in\DD^2\}$ spans a dense subspace of $M^j$ , since otherwise we may replace $M^j$ by this span. However, it needn't be true that $\{u(\lambda) : \lambda\in\DD^2\}$ spans a dense subspace of $M$ (these observations can be found in \cite[Section 3]{AMYcaratheodory}).

 \small
\subsection{B-points and C-points} 
\large If $S\subset\DD^2$ and $\tau\in\partial\DD^2$, we say that $S$ \textit{approaches} $\tau$ \textit{nontangentially} if $\tau\in\text{cl}(S)$ (where $\text{cl}(S)$ denotes the topological closure of $S$) and there exists a constant
$c>0$ such that
\begin{equation}\label{nt}
||\tau-\lambda||\le c(1-||\lambda||),
\end{equation}
for all $\lambda\in S,$ where $||(\lambda^1, \lambda^2)||=\max\{|\lambda^1|, |\lambda^2|\}$. \par 
Now, let $\phi\in\mathcal{S}_2$ and $\tau\in\partial\DD^2.$ $\tau$ is said to be a \textit{B-point for} $\phi$ if the Carath\'{e}odory condition
\begin{equation}\label{Caratheodorycond}
\liminf_{\lambda\to\tau} \frac{1-||\phi(\lambda)||}{1-||\lambda||}<\infty  
\end{equation}
 holds. The nontangential limit of $\phi$ at any such $\tau$ always exists \cite{AbateJuliaWolff} and will be denoted by $\phi(\tau).$ \par While in one variable the Julia-Carath\'{e}odory Theorem \cite{Caratheod} tells us that a function in $S$ has an angular derivative at any B-point $\tau,$ a function $\phi\in \mathcal{S}_2$ does not necessarily have an angular gradient at all of its B-points. If $\phi$ does have an angular gradient at $\tau$, we will say that $\tau$ is a \textit{C-point for} $\phi$. In any case, $\phi$ will always have a directional derivative at a $B$-point in any direction pointing into
the bidisk. Moreover, as was shown in \cite{AMYcaratheodory}, the directional derivatives in question will vary holomorphically with respect to direction (actually, the derivatives can be described in terms of certain one-variable \textit{Pick functions} \cite{ATYgeneralizedmodels}, though we won't be needing this result here). \par To state the relevant theorems, we need some notation. Let $(M, u)$ be a model for $\phi\in\mathcal{S}_2$ and define the \textit{nontangential cluster set} $X_{\tau}$ of the model at a B-point $\tau$ of $\phi$ to be the set of weak limits of weakly convergent sequences
$\{u_{\lambda_n}\}$ over all sequences $\{\lambda_n\}$ that converge nontangentially to $\tau$ in $\DD^2$. $X_{\tau}$ turns out to be a subset of the \textit{cluster set} of $(M, u)$ at $\tau$, which is defined as the set of limits in $M$ of the weakly convergent sequences $\{u_{\lambda_n}\}$ as $\{\lambda_n\}$ ranges over all sequences in $\DD^2$ that tend to $\tau$ in such a way that 
\begin{equation}\label{clustersetcondition}
  \frac{1-|\phi(\lambda_n)|}{1-||\lambda_n||}  
\end{equation}
remains bounded. The cluster set at $\tau$ will be denoted by $Y_{\tau}.$ Also, let $\mathbb{H}=\{z\in\mathbb{C} : \Re z>0\}, \mathbb{T}=\partial\DD$ and define, for every $\tau\in\partial\mathbb{D}^2,$
$$\mathbb{H}(\tau)=\begin{cases} \tau^1\mathbb{H}\times\tau^2 \mathbb{H} \hspace{0.6 cm} \text{if }\tau\in\mathbb{T}^2, \\
\tau^1\mathbb{H}\times\mathbb{C} \hspace{0.6 cm} \text{if }\tau\in\mathbb{T}\times\DD, \\
\mathbb{C}\times\tau^2 \mathbb{H} \hspace{0.6 cm} \text{if }\tau\in\DD\times\mathbb{T}.
\end{cases}
$$
\par For the remainder of this subsection, fix a function $\phi\in \mathcal{S}_2$ with model $(M, u)$ and a B-point $\tau\in\partial\DD^2.$ The next lemma can be easily obtained from (\ref{basicmodelform}).
\begin{lemma}[see \cite{AMYcaratheodory}, Proposition 4.2]\label{basicmodelsetup}
We have $X_{\tau}\neq \emptyset$. Moreover, for all $x\in Y_{\tau}$ and $\lambda\in\DD^2$,
\begin{equation} \label{basicmodelsetupeq}
 1-\phi(\lambda)\overline{\phi(\tau)}=\sum_{|\tau^j|=1}(1-\lambda^j\overline{\tau^j})\langle u^j_{\lambda}, x^j\rangle.
\end{equation}
\end{lemma}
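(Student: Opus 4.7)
The plan is in two steps. First I would establish non-emptiness of $X_\tau$ via a weak-compactness argument; second, I would derive the identity (\ref{basicmodelsetupeq}) by passing to the limit in the defining identity (\ref{basicmodelform}).

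For the first step, the B-point hypothesis (\ref{Caratheodorycond}) furnishes a nontangential sequence $\lambda_n\to\tau$ along which the ratio $(1-|\phi(\lambda_n)|)/(1-\|\lambda_n\|)$ remains uniformly bounded. Setting $\mu=\lambda$ in (\ref{basicmodelform}) gives
\begin{equation*}
1-|\phi(\lambda)|^2=(1-|\lambda^1|^2)\|u^1_\lambda\|^2+(1-|\lambda^2|^2)\|u^2_\lambda\|^2,
\end{equation*}
so nonnegativity yields the key a priori bound
\begin{equation*}
\|u^j_\lambda\|^2\le\frac{1-|\phi(\lambda)|^2}{1-|\lambda^j|^2}\le \frac{2(1-|\phi(\lambda)|)}{1-\|\lambda\|},
\end{equation*}
where I have used $1-|\lambda^j|\ge 1-\|\lambda\|$. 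Consequently $\{u_{\lambda_n}\}$ is uniformly norm-bounded in the separable Hilbert space $M$, and I would extract a weakly convergent subsequence via Banach--Alaoglu whose limit is an element of $X_\tau$.

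For the identity, fix $x\in Y_\tau$ witnessed by some sequence $\lambda_n\to\tau$ with $u_{\lambda_n}\rightharpoonup x$ weakly and (\ref{clustersetcondition}) bounded. Substituting $\mu=\lambda_n$ into (\ref{basicmodelform}) produces
\begin{equation*}
1-\phi(\lambda)\overline{\phi(\lambda_n)}=\sum_{j=1}^{2}(1-\lambda^j\overline{\lambda_n^j})\langle u^j_\lambda,u^j_{\lambda_n}\rangle,
\end{equation*}
and I would let $n\to\infty$ termwise. The left-hand side tends to $1-\phi(\lambda)\overline{\phi(\tau)}$ because $\phi(\lambda_n)\to\phi(\tau)$ along any sequence satisfying (\ref{clustersetcondition}), which follows from the Julia--Carath\'eodory machinery at B-points. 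For the indices $j$ with $|\tau^j|=1$, the scalar $(1-\lambda^j\overline{\lambda_n^j})\to(1-\lambda^j\overline{\tau^j})$ while $\langle u^j_\lambda,u^j_{\lambda_n}\rangle\to\langle u^j_\lambda,x^j\rangle$ by weak convergence, producing exactly the desired summand.

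The step I expect to be the main obstacle is the removal of the $j$-th summand when $|\tau^j|<1$. For such $j$ the quantity $1-|\lambda_n^j|^2$ is bounded away from $0$ (since $\lambda_n^j\to\tau^j$ with $|\tau^j|<1$), whereas $1-|\phi(\lambda_n)|\to 0$ by the boundedness of (\ref{clustersetcondition}) together with $\lambda_n\to\tau\in\partial\DD^2$. Reapplying the a priori bound from Step~1 then gives
\begin{equation*}
\|u^j_{\lambda_n}\|^2\le\frac{1-|\phi(\lambda_n)|^2}{1-|\lambda_n^j|^2}\longrightarrow 0,
\end{equation*}
i.e.\ $u^j_{\lambda_n}\to 0$ strongly. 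This forces $x^j=0$ and simultaneously kills the whole summand in the limit, so only the boundary indices contribute, which yields (\ref{basicmodelsetupeq}).
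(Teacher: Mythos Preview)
Your proposal is correct and is precisely the standard model-theoretic derivation the paper has in mind: the paper does not prove this lemma in detail but simply remarks that it ``can be easily obtained from (\ref{basicmodelform})'' and cites \cite{AMYcaratheodory}. Your two steps---bounding $\|u_{\lambda_n}\|$ via the diagonal model identity to extract weak limits, then passing to the limit in (\ref{basicmodelform}) and killing the non-boundary summands through strong convergence $u^j_{\lambda_n}\to 0$---are exactly that argument.

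One small point worth tightening: you assert that $\phi(\lambda_n)\to\phi(\tau)$ along an arbitrary $Y_\tau$ sequence by appeal to ``Julia--Carath\'eodory machinery,'' but such sequences need not approach $\tau$ nontangentially or horospherically, so this is not entirely immediate. The clean way to close this is to first run your limiting argument along a \emph{nontangential} sequence (where $\phi(\mu_n)\to\phi(\tau)$ holds by definition and $\|u_{\mu_n}\|$ is bounded via Julia's inequality (\ref{classicalJulia})), obtaining the identity (\ref{basicmodelsetupeq}) for some $y\in X_\tau$; then, for a general $Y_\tau$ sequence $\lambda_n$ with cluster value $\omega\in\mathbb T$, substitute $\lambda=\lambda_n$ into that already-established identity and let $n\to\infty$ to get $1-\omega\overline{\phi(\tau)}=0$, hence $\omega=\phi(\tau)$. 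This bootstrapping is implicit in the AMY argument and completes your proof without gaps.
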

As a consequence, we obtain:
\begin{lemma}[see \cite{AMYcaratheodory}, Lemma 8.10] \label{facialBisC}
 If $|\tau^j|<1$ for $j=1$ or $2,$ then $$Y_{\tau}=\{u_{\tau}\}, \hspace{0.2 cm} \text{ where }u^j_{\tau}=0.$$
\end{lemma}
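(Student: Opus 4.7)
The plan is to exploit the decomposition in the model identity together with the fact that one of the coordinates of $\tau$ remains strictly inside the disk, which forces the corresponding model component to vanish on any cluster sequence.

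By symmetry we may assume $|\tau^1|=1$ and $|\tau^2|<1$. First, I would show that the $M^2$-component of any cluster vector is automatically zero. Setting $\lambda=\mu$ in the defining identity (\ref{basicmodelform}) yields
\begin{equation*}
1-|\phi(\lambda)|^2=(1-|\lambda^1|^2)\|u^1_\lambda\|^2+(1-|\lambda^2|^2)\|u^2_\lambda\|^2.
\end{equation*}
Now let $\{\lambda_n\}\subset\DD^2$ be any sequence realizing an element $x\in Y_\tau$, i.e.\ $\lambda_n\to\tau$, $u_{\lambda_n}\rightharpoonup x$ weakly in $M$, and the ratio $(1-|\phi(\lambda_n)|)/(1-\|\lambda_n\|)$ stays bounded. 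Since $\|\tau\|=1$, we have $1-|\phi(\lambda_n)|\to 0$, so the left-hand side above tends to $0$. On the other hand, $1-|\lambda_n^2|^2\to 1-|\tau^2|^2>0$, which forces $\|u^2_{\lambda_n}\|\to 0$. Hence $u^2_{\lambda_n}\to 0$ in norm, and in particular $x^2=0$.

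Second, I would use the given Lemma \ref{basicmodelsetup} to pin down $x^1$. Because only $j=1$ contributes to the sum in (\ref{basicmodelsetupeq}), that lemma reduces to
\begin{equation*}
1-\phi(\mu)\overline{\phi(\tau)}=(1-\mu^1\overline{\tau^1})\langle u^1_\mu,x^1\rangle,\qquad\mu\in\DD^2.
\end{equation*}
For any $\mu\in\DD^2$ with $\mu^1\neq\tau^1$ this gives
\begin{equation*}
\langle u^1_\mu,x^1\rangle=\frac{1-\phi(\mu)\overline{\phi(\tau)}}{1-\mu^1\overline{\tau^1}},
\end{equation*}
an expression depending only on $\phi$ and $\tau$, not on the choice of $x\in Y_\tau$. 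Since $\{u^1_\mu:\mu\in\DD^2\}$ was taken to span a dense subspace of $M^1$ (as noted after Definition \ref{modeldef}), the inner products $\langle u^1_\mu,x^1\rangle$ determine $x^1$ uniquely.

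Combining the two steps, any two elements of $Y_\tau$ share the same $M^1$-component and have zero $M^2$-component, so $Y_\tau$ consists of a single vector $u_\tau$, and this vector satisfies $u^2_\tau=0$ (respectively $u^1_\tau=0$ in the symmetric case). The main subtlety is really just the first step: recognizing that the interior coordinate $\tau^2$ contributes a factor bounded away from zero in the diagonal identity, which kills the $M^2$-component of every cluster vector. Everything else is a direct application of Lemma \ref{basicmodelsetup} together with the density convention on the spans.
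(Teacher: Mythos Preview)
Your proof is correct. The paper does not give its own proof of this lemma (it simply cites \cite{AMYcaratheodory} and introduces the statement with ``As a consequence, we obtain''), and your argument is precisely the natural one suggested by that phrasing: use the diagonal model identity to kill the component corresponding to the interior coordinate, then invoke Lemma~\ref{basicmodelsetup} together with the density convention to show the remaining component is uniquely determined.
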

A consequence of the following theorem is that facial B-points are always C-points (see \cite{AMYfacial} for more results in that direction).
\begin{theorem}[see \cite{AMYcaratheodory}, Corollary 8.11] \label{facialB}
$\tau$ is a C-point for $\phi$ if and only if $X_{\tau}$ is a singleton set.
\end{theorem}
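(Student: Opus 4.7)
The plan is to handle the two directions separately, after first reducing to the case $\tau \in \mathbb{T}^2$. For facial $\tau$, Lemma \ref{facialBisC} forces $Y_{\tau}$ (and hence $X_{\tau}\subset Y_{\tau}$) to be a singleton, and the preceding remark that facial B-points are automatically C-points makes both sides of the equivalence hold trivially. So assume $\tau \in \mathbb{T}^2$, in which case the B-point condition gives $|\phi(\tau)|=1$.

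For the $(\Leftarrow)$ direction, assume $X_{\tau}=\{x\}$. First I would observe that setting $\lambda=\mu$ in (\ref{basicmodelform}) together with the B-point condition gives a uniform bound on $\|u^j_{\lambda}\|$ as $\lambda \to \tau$ nontangentially. Weak compactness then says every nontangential sequence $\lambda_n \to \tau$ has a weakly convergent subsequence whose limit, by hypothesis, must be $x$; a standard sub-subsequence argument upgrades this to weak convergence $u_{\lambda}\rightharpoonup x$ along the whole nontangential net. Rewriting (\ref{basicmodelsetupeq}) using $1-\lambda^j\overline{\tau^j}=-\overline{\tau^j}(\lambda^j-\tau^j)$ and $|\phi(\tau)|=1$ yields
\begin{equation*}
\phi(\lambda)-\phi(\tau)=\phi(\tau)\overline{\tau^1}(\lambda^1-\tau^1)\langle u^1_{\lambda},x^1\rangle+\phi(\tau)\overline{\tau^2}(\lambda^2-\tau^2)\langle u^2_{\lambda},x^2\rangle,
\end{equation*}
and since $\langle u^j_{\lambda},x^j\rangle \to \|x^j\|^2$ nontangentially, the angular gradient exists with components $\alpha^j=\phi(\tau)\overline{\tau^j}\|x^j\|^2$. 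Hence $\tau$ is a C-point.

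For the $(\Rightarrow)$ direction, assume $\tau$ is a C-point with angular gradient $(\alpha^1,\alpha^2)$, and set $N_j:=\overline{\phi(\tau)}\tau^j\alpha^j$ (the value that the reverse direction suggests will coincide with $\|x^j\|^2$ for every $x\in X_\tau$). Let $x,y\in X_{\tau}$ be arbitrary. Taking any nontangential $\lambda_n\to\tau$ carrying $u_{\lambda_n}\rightharpoonup x$, I would combine Lemma \ref{basicmodelsetup} applied to $x$ with the angular gradient expansion of $1-\phi(\lambda_n)\overline{\phi(\tau)}$ to obtain
\begin{equation*}
t_n^1\bigl(\langle u^1_{\lambda_n},x^1\rangle-N_1\bigr)+t_n^2\bigl(\langle u^2_{\lambda_n},x^2\rangle-N_2\bigr)=o(\|\lambda_n-\tau\|),
\end{equation*}
where $t_n^j=1-\lambda_n^j\overline{\tau^j}$, together with its complex-conjugate counterpart. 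Appropriate normalization and passage to the limit produce two independent linear equations in $(\|x^1\|^2-N_1,\|x^2\|^2-N_2)$, which force $\|x^j\|^2=N_j$; repeating the argument with $y$ in place of $x$ (still along the sequence producing $x$) gives $\langle x^j,y^j\rangle=N_j$. Since $\|x^j\|=\|y^j\|=\sqrt{N_j}$ and $\langle x^j,y^j\rangle=\|x^j\|\|y^j\|$, the Cauchy--Schwarz equality case yields $x^j=y^j$ for each $j$, so $x=y$ and $X_{\tau}$ is a singleton.

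The main obstacle I anticipate is the ``two independent linear relations'' step in the forward direction: a single nontangential approach naturally produces only one scalar identity among $\|x^j\|^2-N_j$, so a second, independent relation must come from somewhere. I expect the conjugated form of Lemma \ref{basicmodelsetup} (which replaces $t_n^j$ by $\overline{t_n^j}$) supplies this whenever the ``slope'' $t_n^2/t_n^1$ has non-real limit; the degenerate case where this slope is real will need either a small perturbation of the approach sequence that preserves the weak limit of $u_{\lambda_n}$, or a direct appeal to a companion nontangential path tailored to yield the same weak cluster point $x$ with a different slope. Establishing that such a perturbation exists, using the analyticity of $\lambda\mapsto\langle u^j_{\lambda},x^j\rangle$ and the uniformity of the $o(\|\lambda-\tau\|)$ term across nontangential cones, is the most delicate piece of the argument.
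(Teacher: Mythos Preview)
The paper does not prove this statement; it is quoted from \cite{AMYcaratheodory} without argument, so there is no ``paper's own proof'' to compare against. I will therefore assess your proposal on its own merits.

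Your $(\Leftarrow)$ direction is sound: boundedness of $u_\lambda$ along nontangential approach, weak compactness, and the hypothesis $X_\tau=\{x\}$ give weak convergence of the full nontangential net to $x$, and then the identity derived from Lemma~\ref{basicmodelsetup} produces the angular gradient with components $\phi(\tau)\overline{\tau^j}\|x^j\|^2$.

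The $(\Rightarrow)$ direction, however, has a genuine gap at exactly the point you flag. You obtain one complex relation $A_1 + sA_2 = 0$ (with $A_j=\|x^j\|^2-N_j$) from the particular sequence $\{\lambda_n\}$ that realizes $x$ as a weak limit; to get a second independent relation you propose either conjugating or perturbing $\{\lambda_n\}$. The conjugate of your displayed identity is not new information---it is the same complex equation conjugated---so it helps only if the $A_j$ are known a~priori to be real and the limiting slope $s$ is non-real, neither of which you have established in general. The perturbation idea is more seriously problematic: you want to replace $\{\lambda_n\}$ by a nearby nontangential sequence with a different slope while keeping the \emph{same} weak limit $x$, but if $X_\tau$ contains more than one point then nearby sequences can (and will) have different weak limits. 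Assuming otherwise is circular. A workable route uses Theorem~\ref{generaldir}: linearity of $D_{-\delta}\phi(\tau)$ in $\delta$ forces $\|x_\tau^j(\delta)\|^2$ to be constant in $\delta$ (take real and imaginary parts along $\delta$ with non-real slope), and then holomorphy of $\delta\mapsto x_\tau(\delta)$ with constant norm forces $x_\tau(\cdot)$ itself to be constant. But you would still need to show that every element of $X_\tau$ arises as some $x_\tau(\delta)$---i.e., that weak limits along arbitrary nontangential sequences coincide with the norm limits along rays---and this step is not addressed in your proposal.
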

Now, since $\tau$ is a B-point for $\phi,$ we know that for every $\delta\in\mathbb{H}(\tau)$ the directional derivative
$$ D_{-\delta}\phi(\tau)=\lim_{t\to 0+}\frac{\phi(\tau-t\delta)-\phi(\tau)}{t}$$
exists. Much more can be said.
\begin{theorem}[see \cite{AMYcaratheodory}, Theorems 7.1, 7.8]\label{generaldir}
For any $\delta\in\mathbb{H}(\tau)$, the nontangential limit (in the norm of $M$)
$$x_{\tau}(\delta)=\lim_{\tau-z\delta\xrightarrow{nt}\tau}u_{\tau-z\delta}$$
exists in $M$. In addition,
\begin{itemize}
    \item[(1)] $x_{\tau}(\cdot)$ is a holomorphic $M$-valued function on $\mathbb{H}(\tau)$;
    \item[(2)] $x_{\tau}(\delta)\in X_{\tau}$ for all $\delta\in\mathbb{H}(\tau)$;
    \item[(3)]  $x_{\tau}(z\delta)=x_{\tau}(\delta)$ for all $z\in\mathbb{C}$ such that $\delta, z\delta\in\mathbb{H}(\tau)$ (i.e. $x_{\tau}(\cdot)$ is homogeneous of degree $0$ in $\delta$);
    \item[(4)] $D_{-\delta}\phi(\tau)$ is analytic, homogeneous of degree $1$ in $\delta$ and satisfies 
    $$D_{-\delta}\phi(\tau)=-\phi(\tau)\sum_{|\tau^j|=1}\overline{\tau^j}\delta^j||x^j_{\tau}(\delta)||^2.$$
\end{itemize} 
\end{theorem}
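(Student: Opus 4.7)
The strategy is to restrict $\phi$ to the one-variable slice $z\mapsto\tau-z\delta$ and apply the classical Julia-Carath\'eodory theorem on this slice. Note that $|\phi(\tau)|=1$ (a standard consequence of the B-point condition) and that the hypothesis $\delta\in\HH(\tau)$ is precisely what ensures $\tau-z\delta\in\DD^2$ for all $z$ in some open half-plane $\Omega\subset\C$ whose boundary point $0$ corresponds to $\tau$.

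The plan is to substitute $\lambda=\tau-z\delta$ into the two basic identities at our disposal. From Lemma \ref{basicmodelsetup} applied to any $x\in Y_\tau$, together with Lemma \ref{facialBisC} to discard the terms indexed by $|\tau^j|<1$, one obtains
$$1-\phi(\tau-z\delta)\overline{\phi(\tau)}=\sum_{|\tau^j|=1}z\,\overline{\tau^j}\delta^j\,\langle u^j_{\tau-z\delta},x^j\rangle,$$
while the choice $\lambda=\mu=\tau-z\delta$ in $(\ref{basicmodelform})$ produces the ``Julia on the slice'' identity
$$1-|\phi(\tau-z\delta)|^2=\sum_{|\tau^j|=1}\bigl(2\Re(z\overline{\tau^j}\delta^j)-|z\delta^j|^2\bigr)\|u^j_{\tau-z\delta}\|^2+\sum_{|\tau^j|<1}\bigl(1-|\tau^j-z\delta^j|^2\bigr)\|u^j_{\tau-z\delta}\|^2.$$
Viewing $g(z):=\phi(\tau-z\delta)$ as a holomorphic map from $\Omega$ into $\DD$ with $|g(0)|=1$, these two identities show that $g$ satisfies a one-variable Carath\'eodory condition at $0$. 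The classical one-variable Julia-Carath\'eodory theorem then furnishes the angular derivative $g'(0)$ and the expected boundary behaviour of the norms $\|u^j_{\tau-z\delta}\|^2$ along nontangential approaches $z\to 0$ in $\Omega$.

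From these ingredients the four conclusions should drop out as follows. First, each $\|u^j_{\tau-z\delta}\|^2$ will possess a finite nontangential limit; combined with the weak subsequential convergence of $\{u_{\tau-z\delta}\}$ to elements of $X_\tau$, this will promote to a strong nontangential limit $x_\tau(\delta)\in X_\tau$, giving existence and item (2). Holomorphy of $\delta\mapsto x_\tau(\delta)$ on $\HH(\tau)$ (item (1)) will follow from a Vitali-type argument, since the $M$-valued holomorphic maps $\delta\mapsto u_{\tau-t\delta}$ are locally uniformly bounded on $\HH(\tau)$ and converge pointwise as $t\to 0^+$. Homogeneity (item (3)) is immediate from the identity $\tau-z(r\delta)=\tau-(rz)\delta$. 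For the derivative formula (item (4)), I will divide the first displayed identity by $z$, send $z\to 0^+$ with $x=x_\tau(\delta)$, and use $|\phi(\tau)|^2=1$: the left side tends to $-\overline{\phi(\tau)}D_{-\delta}\phi(\tau)$ while the right side tends to $\sum_{|\tau^j|=1}\overline{\tau^j}\delta^j\|x^j_\tau(\delta)\|^2$; multiplying through by $-\phi(\tau)$ yields the stated expression, and the analyticity and homogeneity-of-degree-$1$ properties of $D_{-\delta}\phi(\tau)$ in $\delta$ then follow from those of $x_\tau$.

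The main technical obstacle is the promotion of \emph{weak} subsequential convergence of $\{u_{\tau-z\delta}\}$ to \emph{strong} nontangential convergence of the whole net. The scalar Carath\'eodory condition on the slice $g$ a priori controls only the weighted sum $\sum\overline{\tau^j}\delta^j\|u^j_{\tau-z\delta}\|^2$, not the individual norms. Extracting convergence of each term will require combining the full angular-derivative form of the one-variable theorem (equality of nontangential and angular-derivative limits, including independence of the choice of nontangential approach) with the positivity of $\Re(\overline{\tau^j}\delta^j)$ for $\delta\in\HH(\tau)$ and the structure of the cluster set $X_\tau$. Once each $\|u^j_{\tau-z\delta}\|^2$ is known to converge, the upgrade to strong convergence in $M$ follows from the standard Hilbert-space fact that weak convergence together with convergence of norms implies norm convergence.
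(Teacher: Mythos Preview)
The paper does not prove this theorem: it is quoted as background from \cite{AMYcaratheodory} (their Theorems~7.1 and~7.8), with no argument supplied, so there is no in-paper proof to compare against. Your outline is essentially the strategy of \cite{AMYcaratheodory}: restrict to the slice $z\mapsto\tau-z\delta$, invoke one-variable Julia--Carath\'eodory, and bootstrap weak subsequential convergence of $u_{\tau-z\delta}$ to norm convergence.

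On the step you flag as the ``main technical obstacle'': your resolution is stated vaguely, but the ingredients you name (positivity of $\Re(\overline{\tau^j}\delta^j)$ and Lemma~\ref{basicmodelsetup}) are exactly what closes it. Concretely, with $a_j=\overline{\tau^j}\delta^j$ the diagonal identity shows $\sum_{|\tau^j|=1}\Re(a_j)\,\|u^j_{\tau-t\delta}\|^2$ converges, so each $\|u^j_{\tau-t\delta}\|$ is bounded and one extracts a weakly convergent subsequence $u_{\tau-t_n\delta}\rightharpoonup y\in X_\tau$. Feeding $x=y$ into Lemma~\ref{basicmodelsetup} and dividing by $t_n$ gives $\sum_j a_j\|y^j\|^2=c:=-\overline{\phi(\tau)}D_{-\delta}\phi(\tau)$; comparing with the diagonal limit $\sum_j\Re(a_j)\|u^j_{t_n}\|^2\to\Re c=\sum_j\Re(a_j)\|y^j\|^2$ and using weak lower semicontinuity together with $\Re a_j>0$ forces $\|u^j_{t_n}\|\to\|y^j\|$ for every $j$, hence norm convergence along the subsequence. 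For uniqueness of the limit, apply Lemma~\ref{basicmodelsetup} with two cluster points $y,y'$, subtract, and evaluate along each subsequence to obtain $\sum_j a_j\|y^j-(y')^j\|^2=0$. Your derivations of items~(1)--(4) from norm convergence are then correct as written.
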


 \small
\subsection{Horocycles and horospheres} 
\large 
The language of horospheres and horocycles will be required for our iteration-theoretic results. Recall that a \textit{horocycle} in $\DD$ is a set of the form $E(\tau, R)$ for some $\tau\in\text{cl}(\DD)$ and $R>0,$ where 
$$E(\tau, R)=\bigg\{\lambda\in\DD : \frac{|\lambda-\tau|^2}{1-|\lambda|^2}<R \bigg\}$$
for $\tau\in\mathbb{T},$ while $E(\tau, R)=\DD$ otherwise. Letting $D(z, r)$ denote the Euclidean disk in $\mathbb{C}$ with centre $z$ and radius $r>0,$ it is not hard to see that, given any $\tau\in\mathbb{T}$, we always have
$$E(\tau, R)=D\bigg(\frac{\tau}{R+1}, \frac{R}{R+1}\bigg).$$
Also, for $\tau=(\tau^1, \tau^2)\in\partial\DD^2$ and $R_1, R_2>0,$ we define the (weighted) \textit{horosphere} $E(\tau, R_1, R_2)$ to be the set $E(\tau^1, R_1)\times E(\tau^2, R_2)$. \par 
Now, given $\phi\in\mathcal{S}$ and a B-point $\tau\in\mathbb{T},$ it is known that 
$$\alpha:=\lim_{\lambda\xrightarrow{\text{nt}}\tau}\frac{1-|\phi(\lambda)|}{1-|\lambda|}\ge 0$$
exists. Julia's inequality \cite{Caratheod}, \cite{Julia} then states that 
\begin{equation}\label{Juliaineq}
 \phi\big(E(\tau, R) \big)\subset E(\phi(\tau), \alpha R),  
\end{equation}
for all $R>0.$ Generalizations of this result to the bidisk are contained in  \cite{Abatebook} and \cite{Wlod} (see also \cite[Section 4]{AMYcaratheodory} for a model-theoretic proof). In particular, given $\phi\in\mathcal{S}_2$ and a B-point $\tau\in\partial\DD^2$, it is known that, for any $\alpha\ge 0$, we have
$$\liminf_{\lambda\to\tau}\frac{1-|\phi(\lambda)|}{1-||\lambda||}\le \alpha $$
if and only if 
\begin{equation}\label{classicalJulia}
  \phi(E(\tau, R, R))\subset E(\phi(\tau), \alpha R),  
\end{equation}
for all $R>0$ (if $\alpha=0,$ then $\phi$ is constant). In Section \ref{generalBCdir}, we use ideas from \cite{AMYcaratheodory} to establish a refined version of the previous equivalence, one that is expressed in terms of weighted horospheres (see Theorem \ref{Juliarevisited}). \par 
Lastly, we will occasionally be making use of the \textit{horospheric topology} on $\text{cl}(\DD^2)$, which is the topology with base consisting of all open sets of $\DD^2$ together with all sets of the form $\{\tau\}\cup E(\tau, R_1, R_2),$ where $\tau\in\partial\DD^2$ and $R_1, R_2>0$ (see \cite[Section 4]{AMYcaratheodory} for more details). Note that (\ref{classicalJulia}) tells us that $\phi(\lambda)\to \phi(\tau)$ whenever $\tau$ is a B-point and $\lambda\to\tau$ horospherically.

\small
\subsection{Herv\'{e}'s result} 
\large 
 For $i\in\{1, 2\}$, define the coordinate projections $\pi^i:\DD^2\to\DD, \pi^i(\lambda)=\lambda^i.$ Given $\phi\in\mathcal{S}_2$ and $\mu\in\DD$, we will denote by $\phi_{\mu}\in\mathcal{S}$ the slice function $$\phi_{\mu}(\lambda)=\phi(\lambda, \mu) \hspace{0.4 cm} (\lambda\in\DD).$$ Also, we let $\widetilde{\phi}\in\mathcal{S}_2$ denote the function
 $\widetilde{\phi}(\lambda)=\phi(\lambda^2, \lambda^1), \text{ for }$ obtained from $\phi$ by interchanging the arguments. \par 
 Holomorphic functions $\phi: \DD^2\to\DD$ can be classified according to the Denjoy-Wolff points of their slices. 

\begin{definition}\label{Type I, Type II} 
Assume $\phi\in\mathcal{S}_2$. $\phi$ is said to be a:  \begin{itemize}
    \item[(i)] \textit{left Type I} function if $\phi\neq\pi^1$ and there exists $\tau^1\in\mathbb{T}$ such that $\tau^1$ is the common Denjoy-Wolff point of the maps $\phi_{\mu}\in\mathcal{S},$ for all $\mu\in\DD$;
    \item[(ii)]\textit{right Type I} function if $\widetilde{\phi}$ is a left Type I function;
    \item[(iii)]\textit{left Type II} function if $\phi\neq\pi^1$ and there exists a holomorphic map $\xi:\DD\to\DD$ such that, for all $\lambda, \mu\in\DD$, we have $\phi_{\mu}(\lambda)=\lambda$ if and only if $\xi(\mu)=\lambda$;
    \item[(iv)]\textit{right Type II} function if $\widetilde{\phi}$ is a left Type II function.
\end{itemize}
\end{definition}
Surprisingly, it turns out that any $\phi\in\mathcal{S}_2$ that is not a coordinate projection will either be a left Type I or a left Type II function (respectively, either a right Type I or a right Type II function), a result originally proved by Herv\'{e} in \cite{HERVE}. In Section \ref{charactDW}, we give a new proof of this using purely model-theoretic methods (see Theorem \ref{130}). 
\par 
Using the Type I/Type II terminology, the main result of \cite{HERVE} can be stated as follows. 
\begin{theorem}[Herv\'{e}] \label{HERVE's THEOREM}
    Let $F=(\phi, \psi):\DD^2\to\DD^2$ be a holomorphic self-map of the bidisk without fixed points.  Then, one and only one of the following cases
occurs: 
\begin{itemize}
    \item[(i)] if $\psi\equiv\pi^2$ (respectively, $\phi\equiv\pi^1$), then $\{F^n\}$ converges uniformly on
compact sets to $(\tau^1, \pi^2),$ where $\tau^1\in\mathbb{T}$ (respectively, to $(\pi^1, \tau^2)$, where $\tau^2\in\mathbb{T}$);
\item[(ii)] if $\phi$ is a left Type I and $\psi$ is right Type I function, then there exist $\tau^1, \tau^2\in\mathbb{T}$ such that \begin{itemize}
    \item[(a)] either every cluster point of $\{F^n\}$ has the form $(\tau^1, h),$ where $h$ is either a holomorphic function $\DD^2\to\DD$ or the constant $\tau^2$,
    \item[(b)] or every cluster point of $\{F^n\}$ has the form $(g, \tau^2),$ where $g$ is either a holomorphic function $\DD^2\to\DD$ or the constant $\tau^1$;
\end{itemize} 
\item[(iii)] if $\phi$ is a left Type I function and $\psi$ is a right Type II function (respectively, $\phi$ is a left Type II function and $\psi$ is a right Type I function), there exists $\tau^1\in\mathbb{T}$ such that every cluster point of $\{F^n\}$ has the form $(\tau^1, h),$ where $h\in\mathcal{S}_2$ (respectively, there exists $\tau^2\in\mathbb{T}$ such that every cluster point of $\{F^n\}$ has the form $(g, \tau^2),$ where $g\in\mathcal{S}_2$);
\item[(iv)] if $\phi$ is a left Type II and $\psi$ is a right Type II function, then there exist $\tau^1, \tau^2\in\mathbb{T}$ such that $\{F_n\}$ converges uniformly on compact sets to $(\tau^1, \tau^2).$
\end{itemize}
\end{theorem}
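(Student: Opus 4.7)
The plan is to combine a normal families argument with the Type I / Type II dichotomy of Definition~\ref{Type I, Type II}, exploiting one-variable Denjoy--Wolff and horocycle invariance in each coordinate. Since $\|F^n(\lambda)\|\le 1$ uniformly on $\DD^2$, Montel's theorem guarantees that $\{F^n\}$ is a normal family, so every subsequence has a further convergent subsequence; the cluster set of limits $(g,h):\DD^2\to\operatorname{cl}(\DD^2)$ is therefore nonempty, and by the maximum principle each component is either open-mapped into $\DD$ or constant of unit modulus.

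First I would dispose of case (i). When $\psi\equiv\pi^2$, iteration gives $F^n(\lambda)=(\Phi_n(\lambda),\lambda^2)$ where $\Phi_n(\cdot,\mu)$ is the $n$-th iterate of the disk self-map $\phi_\mu$. The no-fixed-point hypothesis for $F$ implies $\phi_\mu$ has no fixed point for any $\mu\in\DD$, so the classical Denjoy--Wolff theorem supplies boundary points $\tau^1(\mu)\in\mathbb{T}$. Since all slices share the single Denjoy--Wolff point of $\phi$ as a left Type~I function, $\tau^1(\mu)$ is independent of $\mu$, and uniform convergence of $\Phi_n(\cdot,\mu)\to\tau^1$ on compact subsets of $\DD^2$ follows from a continuity argument in $\mu$ together with the standard normal family estimate for one-variable iteration.

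For cases (ii)--(iv) I would use horospherical invariance. If $\phi$ is left Type~I with common Denjoy--Wolff point $\tau^1\in\mathbb{T}$, then Wolff's theorem applied slice by slice gives $\phi_\mu(E(\tau^1,R))\subset E(\tau^1,R)$ for every $\mu\in\DD$ and $R>0$, and hence $\phi(E(\tau^1,R)\times\DD)\subset E(\tau^1,R)$. Iterating $F$ forces the first coordinate of $F^n(\lambda)$ into ever-smaller horocycles at $\tau^1$ unless it is blocked by the second coordinate's behavior, so the first coordinate of any non-escaping cluster function equals the constant $\tau^1$. The symmetric statement for $\psi$ being right Type~I or right Type~II yields conclusions (ii) and (iii) respectively; in (iii), a Type~II $\psi$ admits a graph $\eta:\DD\to\DD$ from Definition~\ref{Type I, Type II}, which permits the second coordinate of cluster functions to remain an honest holomorphic map into $\DD$.

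The hard part will be case (iv) and the dichotomy (a)/(b) of case (ii). For (iv), the Type~II assumption furnishes holomorphic $\xi,\eta:\DD\to\DD$ such that fixed points of slices of $\phi$, $\psi$ are graphs of $\xi,\eta$; the absence of interior fixed points of $F$ translates to $\xi\circ\eta$ having no fixed point in $\DD$, so Denjoy--Wolff yields $\tau^1\in\mathbb{T}$ for $\xi\circ\eta$, and an angular-limit argument gives $\tau^2=\eta(\tau^1)\in\mathbb{T}$ for $\eta\circ\xi$. A Julia-type contraction estimate on the pair of horospheres $E(\tau^1,R_1)\times E(\tau^2,R_2)$ then gives uniform convergence $F^n\to(\tau^1,\tau^2)$. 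For the (a)/(b) dichotomy in (ii), the key obstruction is ruling out mixed behavior: if along one subsequence the first coordinate of the limit lands in $\DD$ while along another the second coordinate lands in $\DD$, one reaches a contradiction with the joint horosphere invariance at $(\tau^1,\tau^2)$ inherited from Wolff's theorem applied to $\phi$ and $\widetilde{\psi}$, so a single alternative must hold globally.
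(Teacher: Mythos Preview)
The paper does not prove Theorem~\ref{HERVE's THEOREM}. It is stated in Section~\ref{prelims} as a classical background result from \cite{HERVE}, and Section~\ref{refineHerve} explicitly refers to ``Herv\'{e}'s proof'' when discussing cases (ii)--(iv). So there is no proof in the paper to compare your proposal against.

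That said, your sketch has a real gap you should be aware of. In cases (ii) and (iii) you write that the slice-wise Wolff inequality $\phi(E(\tau^1,R)\times\DD)\subset E(\tau^1,R)$, once iterated, ``forces the first coordinate of $F^n(\lambda)$ into ever-smaller horocycles at $\tau^1$.'' It does not: this inclusion is an \emph{invariance}, not a contraction, so you only get $\phi_n(\lambda)\in E(\tau^1,R)$ for the \emph{same} $R$ determined by $\lambda^1$. That is enough to say that any unimodular cluster value of $\phi_n$ must equal $\tau^1$, but it does not by itself force $\phi_n\to\tau^1$; the first coordinate could in principle have cluster points in $\DD$. In case (iii), Herv\'{e}'s argument uses the Type~II structure of $\psi$ (the fixed-point graph $\eta$) to rule this out, and in case (ii) the dichotomy (a)/(b) is precisely the statement that you \emph{cannot} rule it out in general: one coordinate may fail to converge to the boundary, and the content of (ii) is that this can happen for at most one of the two coordinates. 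Your final paragraph gestures at the right mechanism (a contradiction from joint horosphere invariance), but the phrase ``ever-smaller horocycles'' earlier suggests you are relying on a contraction that is not there.

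Case (i) and the setup for case (iv) are essentially correct as outlined; for (iv) the paper notes (just before Theorem~\ref{motionin(II,II)}) that the invariant horospheres~(\ref{useful20}) are the crucial ingredient, which matches your plan, though the passage from invariance to actual convergence still requires the extra argument Herv\'{e} supplies.
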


 \small
\subsection{Principal results} 
\large 
We begin with our model-theoretic definitions of Denjoy-Wolff-type points. 
\begin{definition}\label{TypeI+IIdef}   Let $\phi\in\mathcal{S}_2$ with model $(M, u).$  Assume first that $\phi\neq \pi^1$.
\begin{itemize}
    \item[(i)] A point $(\tau^1, \sigma)\in\mathbb{T}\times\text{cl}(\DD)$ will be called a \textit{left Type I DW point} for $\phi$ if it is a B-point, $\phi(\tau^1, \sigma)=\tau^1$  and there exists $u_{(\tau^1, \sigma)}\in Y_{(\tau^1, \sigma)}$ such that $||u^1_{(\tau^1, \sigma)}||\le 1$ and $u^2_{(\tau^1, \sigma)}=0$.    
     \item[(ii)] A point $\tau=(\tau^1, \tau^2)\in\mathbb{T}^2$ will be called a \textit{left Type II DW point} for $\phi$ if it is a B-point, $\phi(\tau)=\tau^1$, there exists $u_{\tau}\in Y_{\tau}$ such that $||u^1_{\tau}||<1$ and $\tau$ is not a left Type I DW point for $\phi.$ In particular, if $K>0$ is any constant such that 
     $$||u^1_{\tau}||^2+K||u^2_{\tau}||^2\le 1,$$
     we will say that $\tau$ is a \textit{left Type II DW point with constant $K$}.
      \end{itemize} 
Now, assume instead that $\phi\neq \pi^2$.
     \begin{itemize}
       \item[(iii)]  A point $(\sigma, \tau^2)\in\text{cl}(\DD)\times\mathbb{T}$ will be called a \textit{right Type I DW point} for $\phi$ if $(\tau^2, \sigma)$ is a left Type I DW point for $\widetilde{\phi}.$
     \item[(iv)] A point $\tau=(\tau^1, \tau^2)\in\mathbb{T}^2$ will be called a \textit{right Type II DW point} for $\phi$ (with constant $K>0$) if $\widetilde{\tau}=(\tau^2, \tau^1)$ is a left Type II DW point for $\widetilde{\phi}$ (with constant $K>0$).
 \end{itemize}
\end{definition}

 An immediate consequence of Definition \ref{TypeI+IIdef} is that every left (resp., right) Type II DW point is a left (resp., right) Type II DW point with constant $K$, for some $K>0.$  \par 
 The following characterizations are proved in Section \ref{charactDW} (notice that the property of being a Type I/Type II point turns out not to depend on the model of the function). 
 \begin{theorem}\label{TypeIcharabridged}
  Let $\phi\in\mathcal{S}_2$ with model $(M, u)$ and $\tau^1\in\mathbb{T}.$  Assume also that $\phi\neq \pi^1$. The following assertions are equivalent: 
  \begin{itemize}
      \item[(i)] there exists $\sigma\in\text{cl}(\DD)$ such that $(\tau^1, \sigma)$ is a left Type I DW point for $\phi;$
      \item[(ii)] every point in $\{\tau^1\}\times\text{cl}(\DD)$ is a left Type I DW point for $\phi$;      
      \item[(iii)] $\phi$ is a left Type I function and the common Denjoy-Wolff point of all slice functions $\phi_{\mu}\in\mathcal{S}$ is $\tau^1$;
       \item[(iv)] there exists $\sigma\in\text{cl}(\DD)$ such that $(\tau^1, \sigma)$ is a B-point, $\phi(\tau^1, \sigma)=\tau^1$  and $$\frac{D_{-(\tau^1, \sigma M)}\phi(\tau^1, \sigma)}{-\tau^1}\le 1, \hspace{0.4 cm} \forall M>0;$$
      \item[(v)] for every $\sigma\in\text{cl}(\DD)$, $(\tau^1, \sigma)$ is a B-point, $\phi(\tau^1, \sigma)=\tau^1$  and  $$\frac{D_{-(\tau^1, \sigma M)}\phi(\tau^1, \sigma)}{-\tau^1}\le 1, \hspace{0.4 cm} \forall M>0.$$
       \end{itemize} \par
Moreover, assuming that any of the above statements holds and letting $\phi'_{\mu}(\tau^1)$ denote the angular derivative of $\phi_{\mu}$ at $\tau^1$, we obtain 
$$\lim_{M\to\infty}D_{-(\tau^1, \sigma M)}\phi(\tau^1, \sigma)=-\tau^1\phi'_{\mu}(\tau^1),$$
for all $\mu\in\DD$ and all $|\sigma|\le 1.$ \par 
There is an analogous statement for right Type I DW points (we need to assume that $\phi\neq \pi^2$).

 \end{theorem}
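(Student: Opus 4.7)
The plan is to establish the cycle (ii) $\Rightarrow$ (v) $\Rightarrow$ (iv) $\Rightarrow$ (i) $\Rightarrow$ (iii) $\Rightarrow$ (ii), noting that (v) $\Rightarrow$ (iv) is immediate by specializing to one $\sigma$, and the \emph{moreover} formula will emerge as a byproduct of the analysis at the corner $|\sigma|=1$. The guiding principle throughout is that a left Type I DW point collapses the identity of Lemma \ref{basicmodelsetup} into the reduced form
\begin{equation*}
1-\phi(\lambda)\overline{\tau^1} \;=\; (1-\lambda^1\overline{\tau^1})\langle u^1_\lambda, v\rangle, \qquad \lambda\in\DD^2,
\end{equation*}
for a suitable $v\in M^1$ with $\|v\|\le 1$; this holds whether $|\sigma|<1$ (the $u^2$ summand is absent) or $|\sigma|=1$ (it vanishes because $u^2_{(\tau^1,\sigma)}=0$).

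For (i) $\Rightarrow$ (iii), I would substitute $\lambda=(\lambda^1,\mu)$ into the reduced equation and combine Cauchy--Schwarz with the pointwise estimate $(1-|\lambda^1|^2)\|u^1_{(\lambda^1,\mu)}\|^2\le 1-|\phi_\mu(\lambda^1)|^2$ (obtained from the model on the diagonal) to produce the one-variable Julia inequality for $\phi_\mu$ at $\tau^1$ with modulus $\|v\|^2\le 1$. The one-variable Denjoy--Wolff theorem then forces $\tau^1$ to be the common Denjoy--Wolff point of every slice $\phi_\mu$, i.e.\ $\phi$ is left Type I.

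For (iii) $\Rightarrow$ (ii) in the facial case $|\sigma|<1$, Lemma \ref{facialBisC} gives $Y_{(\tau^1,\sigma)}=\{u_{(\tau^1,\sigma)}\}$ with $u^2=0$ automatically, and restricting the diagonal model equation to the slice together with Julia--Carath\'{e}odory for $\phi_\sigma$ yields $\|u^1_{(\tau^1,\sigma)}\|^2\le \phi_\sigma'(\tau^1)\le 1$. The key observation is that the vector $v$ in the reduced equation is \emph{unique} by density of $\{u^1_\lambda : \lambda\in\DD^2\}$ in $M^1$, so $v=u^1_{(\tau^1,\mu)}$ does not depend on $\mu\in\DD$ and consequently $\phi_\mu'(\tau^1)=\|v\|^2$ is constant across slices. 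For the corner case $|\sigma|=1$, the reduced equation together with the diagonal bound gives $1-|\phi(\lambda)|\le 2\|v\|^2|1-\lambda^1\overline{\tau^1}|^2/(1-|\lambda^1|^2)$, from which $(\tau^1,\sigma)$ is a B-point with $\phi(\tau^1,\sigma)=\tau^1$. To exhibit the required witness in $Y_{(\tau^1,\sigma)}$, I would diagonalize: pick $\mu_n\in\DD$ with $\mu_n\to\sigma$, then $r_n\nearrow 1$ with $r_n\ge|\mu_n|$ sufficiently close to $1$ that $u_{(r_n\tau^1,\mu_n)}$, which converges in norm to $(v,0)$ as $r\to 1$ with $\mu_n$ fixed (by Theorem \ref{generaldir} applied facially), is within $1/n$ of $(v,0)$; this exhibits $(v,0)$ as a norm limit and hence an element of $Y_{(\tau^1,\sigma)}$.

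For the directional-derivative equivalence, Theorem \ref{generaldir}(4) gives $D_{-\delta_M}\phi(\tau)/(-\tau^1)=\|x^1_\tau(\delta_M)\|^2+M\|x^2_\tau(\delta_M)\|^2$ for $\delta_M=(\tau^1,\sigma M)$. For (i) $\Rightarrow$ (iv), substituting $\tau-t\delta_M$ into the reduced equation and sending $t\to 0^+$ yields the identity $\|x^1_\tau(\delta_M)\|^2+M\|x^2_\tau(\delta_M)\|^2=\langle x^1_\tau(\delta_M),v\rangle$, which is at most $\|v\|^2\le 1$ by Cauchy--Schwarz. For (iv) $\Rightarrow$ (i), condition (iv) forces $\|x^2_\tau(\delta_M)\|^2\le 1/M\to 0$, and any weak subsequential limit $w^1$ of $\{x^1_\tau(\delta_M)\}_{M\to\infty}$ produces $(w^1,0)\in Y_{(\tau^1,\sigma)}$ with $\|w^1\|\le 1$. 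The \emph{moreover} clause follows by identifying $w^1$ with $v$: Lemma \ref{basicmodelsetup} applied to $(w^1,0)$ combined with density of $\{u^1_\lambda\}$ gives $w^1=v$, whence $\langle x^1_\tau(\delta_M),v\rangle\to\|v\|^2=\phi_\mu'(\tau^1)$. Finally, (ii) $\Rightarrow$ (v) is obtained by applying (i) $\Rightarrow$ (iv) at every $\sigma$ furnished by (ii). The main obstacle is the corner case of (iii) $\Rightarrow$ (ii): a direct radial sequence $(r\tau^1,r\sigma)$ does not force $u^2_\lambda\to 0$, so one must route through interior facial points and use Theorem \ref{generaldir} to transfer information from $(\tau^1,\mu_n)$, $\mu_n\in\DD$, up to the corner.
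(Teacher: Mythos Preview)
Your overall scheme is sound and runs largely parallel to the paper's proof, with one structural difference worth noting. The paper closes the loop via $(iv)\Rightarrow(iii)$ rather than $(iv)\Rightarrow(i)$: from $K_\tau(M)\le 1$ for all $M$ it invokes Theorem~\ref{Juliarevisited} to obtain $\phi(E(\tau,R_1,R_2))\subset E(\tau^1,\max\{R_1,R_2/M_k\})$ for $M_k\to\infty$, hence $\phi(E(\tau,R_1,R_2))\subset E(\tau^1,R_1)$, which is precisely the one-variable Julia inequality for every slice. This bypasses any need to produce a witness in $Y_{(\tau^1,\sigma)}$.

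Your route $(iv)\Rightarrow(i)$ instead manufactures the witness $(w^1,0)$ directly. Two points need tightening here. First, in the step ``$\langle x^1_\tau(\delta_M),v\rangle\le\|v\|^2$ by Cauchy--Schwarz'': Cauchy--Schwarz gives only $\langle x^1_\tau(\delta_M),v\rangle\le\|x^1_\tau(\delta_M)\|\,\|v\|$. You must first observe that $\|x^1_\tau(\delta_M)\|^2\le K_\tau(M)=\langle x^1_\tau(\delta_M),v\rangle\le\|x^1_\tau(\delta_M)\|\,\|v\|$ forces $\|x^1_\tau(\delta_M)\|\le\|v\|$, and then conclude. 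Second, and more substantively, the assertion that a weak subsequential limit $w^1$ of $\{x^1_\tau(\delta_M)\}$ yields $(w^1,0)\in Y_{(\tau^1,\sigma)}$ is not automatic: membership in $Y_\tau$ requires a sequence $\lambda_n\to\tau$ with $(1-|\phi(\lambda_n)|)/(1-\|\lambda_n\|)$ bounded and $u_{\lambda_n}\rightharpoonup(w^1,0)$. You can get this by diagonalizing ($\lambda_k=\tau-t_k\delta_{M_k}$ with $t_k$ chosen so that $\|u_{\lambda_k}-x_\tau(\delta_{M_k})\|$ is small enough relative to $M_k$), but the boundedness of $\|u^1_{\lambda_k}\|^2+M_k\|u^2_{\lambda_k}\|^2$ must be checked explicitly. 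This is exactly the content of Theorem~\ref{BbutnotC}, which the paper proves separately and then simply cites both for $(iii)\Rightarrow(v)$ and for the ``moreover'' clause; your argument amounts to reproving it inline.

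Your handling of $(iii)\Rightarrow(ii)$ is essentially the paper's, though the paper's diagonalization for $|\sigma|=1$ is phrased slightly differently: it chooses $\mu_n\to\sigma$ and then $\lambda_n\to\tau^1$ fast enough that $(1-|\lambda_n|)/(1-|\mu_n|)\to 0$, reading the witness off the diagonal model formula directly rather than passing through Theorem~\ref{generaldir} at the facial points $(\tau^1,\mu_n)$. Both work.
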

    
\begin{theorem} \label{TypeIIcharabridged}
 Let $\phi:\DD^2\to\DD$ be holomorphic with model $(M, u)$. Also, let $\tau=(\tau^1, \tau^2) \in\mathbb{T}^2$, $K>0$  and assume that $\phi\neq \pi^1.$ The following assertions are equivalent: 
 \begin{itemize}
 \item[(i)] $\tau$ is a left Type II DW point for $\phi$ with constant $K$;
 \item[(ii)] $\phi$ is a left Type II function. Also, letting $\xi:\DD\to\DD$ denote the holomorphic function such that $\phi(\xi(\mu), \mu)=\xi(\mu),$ for all $\mu\in\DD,$ we have that $\tau^2$ is a B-point for $\xi,$ $\xi(\tau^2)=\tau^1$ and 
 $$\liminf_{z\to\tau^2}\frac{1-|\xi(z)|}{1-|z|}\le \frac{1}{K};$$
 \item[(iii)] $\tau$ is a B-point for $\phi,$ $\phi(\tau)=\tau^1$, the quantity $D_{-(\tau^1, \tau^2 M)}\phi(\tau)$ is not constant with respect to $M>0$ and also there exists $A\ge K$ such that 
 $$\frac{D_{-(\tau^1, \tau^2 A)}\phi(\tau)}{-\tau^1}=1.$$

\end{itemize}
Moreover, assuming that any of the above statements holds, $$A=\bigg[\liminf_{z\to\tau^2}\frac{1-|\xi(z)|}{1-|z|}\bigg]^{-1}$$ will be the maximum among all constants $K>0$ such that $\tau$ is a left Type II DW point for $\phi$ with constant $K$. It will also be the unique positive number such that $D_{-(\tau^1, \tau^2 A)}\phi(\tau)/(-\tau^1)=1.$
\par 
    There is an analogous statement for right Type II DW points (we need to assume that $\phi\neq \pi^2$).
\end{theorem}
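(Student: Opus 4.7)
The plan is to establish the equivalences (i) $\Leftrightarrow$ (iii) and (i) $\Leftrightarrow$ (ii); the ``moreover'' clause then falls out from the explicit identities produced along the way. Throughout, let $\delta_M := (\tau^1, \tau^2 M)$ for $M>0$, set
$$f(M) := \frac{D_{-\delta_M}\phi(\tau)}{-\tau^1},$$
and abbreviate $x(M):=x_\tau(\delta_M)\in X_\tau\subset Y_\tau$. The whole argument rests on two formulas. First, Theorem~\ref{generaldir}(4) gives
$$f(M) = \|x^1(M)\|^2 + M\|x^2(M)\|^2.$$
Second, plugging $\lambda = \tau - t\delta_M$ into Lemma~\ref{basicmodelsetup}, dividing by $t$, letting $t\to 0^+$, and applying Cauchy--Schwarz yields the fundamental inequality
$$f(M) \le \|u^1_\tau\|^2 + M\|u^2_\tau\|^2\qquad(\forall\,u_\tau\in Y_\tau,\ \forall\,M>0).$$
A further ingredient is the side claim: \emph{if $\tau$ is a left Type~I DW point for $\phi$, then $f$ is constant equal to $\phi'_\mu(\tau^1)$}. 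This is proved via the slice identity $\phi((1-t)\tau^1,(1-tM)\tau^2)=\phi_{(1-tM)\tau^2}((1-t)\tau^1)$ together with the moreover clause of Theorem~\ref{TypeIcharabridged}, which makes the angular derivative of $\phi_{(1-tM)\tau^2}$ at $\tau^1$ equal to $\phi'_\mu(\tau^1)$ independently of $M$ and $t$.

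For (iii) $\Rightarrow$ (i): setting $u_\tau := x(A)$, the first formula gives $\|u^1\|^2 + A\|u^2\|^2 = 1$, hence $\|u^1\|^2+K\|u^2\|^2\le 1$. If $\|u^1\|=1$ then $u^2=0$, so $\tau$ is a left Type~I DW point; the side claim then forces $f$ constant, contradicting the non-constancy hypothesis of (iii). Hence $\|u^1\|<1$, and (by the contrapositive of the side claim) $\tau$ is not a left Type~I DW point, giving (i). For (i) $\Rightarrow$ (iii): the fundamental inequality with the $u_\tau$ from Definition~\ref{TypeI+IIdef}(ii) gives $f(K)\le 1$. Since $\tau$ is not Type~I DW, Theorem~\ref{TypeIcharabridged}(iv) (applied with $\sigma=\tau^2$) rules out $f(M)\le 1$ for all $M$, so there is $M_0>K$ with $f(M_0)>1$; continuity of $f$ in $M$ (from holomorphy of $x_\tau(\cdot)$ on $\HH(\tau)$) combined with the intermediate value theorem produces the required $A\in[K,M_0]$ with $f(A)=1$, and non-constancy of $f$ is automatic from $f(K)\le 1<f(M_0)$.

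For (ii) $\Rightarrow$ (i): put $A:=1/\liminf_{z\to\tau^2}(1-|\xi(z)|)/(1-|z|)\ge K$, choose $z_n\to\tau^2$ nontangentially realizing the $\liminf$, and set $\lambda_n:=(\xi(z_n),z_n)$. Since $\phi(\lambda_n)=\xi(z_n)$, a short estimate shows $(1-|\phi(\lambda_n)|)/(1-\|\lambda_n\|)$ is bounded, so after a subsequence $u_{\lambda_n}\rightharpoonup u_\tau\in Y_\tau$. Feeding $\lambda=\lambda_n$ into Lemma~\ref{basicmodelsetup} and invoking the classical Julia--Carath\'eodory theorem for $\xi$ at $\tau^2$ (to identify $(1-z_n\overline{\tau^2})/(1-\xi(z_n)\overline{\tau^1})\to A$) delivers, upon passage to the limit, the key identity
$$\|u^1_\tau\|^2 + A\|u^2_\tau\|^2 = 1,$$
whence (i) with constant $K\le A$. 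For (i) $\Rightarrow$ (ii): Herv\'e's dichotomy (Theorem~\ref{130}) leaves Type~I or Type~II for $\phi$; the Type~I alternative is excluded because $\phi(\tau)=\tau^1$ forces the common slice DW of a Type~I $\phi$ to equal $\tau^1$, making $\tau$ a Type~I DW point and (by the side claim) $f$ constant, contradicting (iii) already derived from (i). Thus $\phi$ is left Type~II with some $\xi$, and reading the key identity backwards (for the $\liminf$-realizing sequence) identifies $\xi(\tau^2)=\tau^1$ and $\liminf_{z\to\tau^2}(1-|\xi(z)|)/(1-|z|)=1/A\le 1/K$.

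The moreover clause follows: the fundamental inequality with $u=x(A)$ forces any $M$ with $f(M)=1$ to satisfy $M\ge A$ (else $\|x^2(A)\|=0$ re-introduces the Type~I contradiction), and the symmetric inequality $f(M)\ge 1+(M-A)\|x^2(M)\|^2$ (from the fundamental inequality with $u=x(M)$ applied at $M'=A$) rules out additional solutions for $M>A$, proving the uniqueness of $A=1/\xi'(\tau^2)$ and its maximality among admissible constants $K$. The right Type~II case reduces to the left case by passing from $\phi$ to $\widetilde{\phi}$. The main obstacle will be establishing the side claim: the moreover clause of Theorem~\ref{TypeIcharabridged} only gives the limit $\lim_{M\to\infty}f(M)=\phi'_\mu(\tau^1)$, and promoting this to identical constancy on $(0,\infty)$ requires controlling the uniformity of the slice angular derivative as the slice parameter $\mu_t=(1-tM)\tau^2$ moves to $\tau^2$ jointly with $t\to 0^+$.
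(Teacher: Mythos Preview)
Your ``side claim'' --- that a left Type~I DW point forces $f(M)=K_\tau(M)$ to be constant --- is not merely hard to prove; it is false. By Proposition~\ref{directdersincreasing}, $K_\tau$ is either strictly increasing or constant, and by Theorems~\ref{facialB} and~\ref{BbutnotC} constancy occurs exactly when $\tau$ is a C-point. A left Type~I DW point that is \emph{not} a C-point (such points exist; see Example~\ref{exmp1}) therefore has strictly increasing $K_\tau$ with $K_\tau(M)<\lim_{M\to\infty}K_\tau(M)=\phi'_\mu(\tau^1)\le 1$ for all finite $M$ --- this is precisely case (b)(ii) of Theorem~\ref{ultimateDWchar}. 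The obstacle you flag at the end is thus insurmountable, and your contrapositive use of the side claim in (iii)$\Rightarrow$(i) (``$f$ non-constant implies $\tau$ not Type~I DW'') is invalid. The paper's fix is to combine $f(A)=1$ with the bound $f\le 1$ that Theorem~\ref{TypeIcharabridged}(v) gives for Type~I DW points: together with the strict-or-constant dichotomy of Proposition~\ref{directdersincreasing} these force $f$ constant, contradicting (iii). (Your first use of the side claim in (iii)$\Rightarrow$(i) is salvageable for a different reason: there $u=x(A)\in X_\tau$, so $x^2(A)=0$ triggers Theorem~\ref{nullcompgivesCpoint} directly and gives constancy via the C-point route.)

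Your (i)$\Rightarrow$(ii) has two further gaps. First, ``$\phi(\tau)=\tau^1$ forces the common slice DW of a Type~I $\phi$ to equal $\tau^1$'' is not automatic: a slice $\phi_\mu$ can have boundary fixed points other than its Denjoy--Wolff point. The paper rules out the case $\sigma^1\neq\tau^1$ via Proposition~\ref{TypeIuniqueness}(iii), which exploits $\|u^1_\tau\|<1$ from the Type~II DW hypothesis. Second, ``reading the key identity backwards'' is circular: that identity was obtained from a sequence $(\xi(z_n),z_n)$ realizing the $\liminf$ for $\xi$, which presupposes that $\tau^2$ is a B-point for $\xi$ --- the very conclusion you seek. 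The paper instead passes through the weighted Julia inequality (Theorem~\ref{Juliarevisited}) to obtain a horosphere inclusion, then substitutes $\lambda=\xi(\mu)$ to extract the one-variable Julia inequality for $\xi$, from which the B-point condition and the $\liminf$ bound follow. Your (ii)$\Rightarrow$(i), (i)$\Rightarrow$(iii), and the uniqueness argument for $A$ are essentially correct and close to the paper's; indeed your (ii)$\Rightarrow$(i) via Lemma~\ref{basicmodelsetup} is slightly sharper, recovering the equality $\|u^1_\tau\|^2+A\|u^2_\tau\|^2=1$ rather than the paper's inequality.
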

A consequence of Theorem \ref{TypeIIcharabridged} is that not all Type II functions have Type II DW points (just choose e.g. any left Type II function such that $\xi$ has no B-points). However, Type II DW points do appear naturally when investigating iteration-theoretic questions. In particular, if $F=(\phi, \psi):\DD^2\to\DD^2$ has no fixed points, $\phi$ is left Type II and $\psi$ is right Type II, then both $\phi$ and $\psi$ will have Type II DW points (see Theorem \ref{(Type II, Type II)} for details). 
\par 
Theorems \ref{TypeIcharabridged}-\ref{TypeIIcharabridged} allow us to give a simple, unified characterization of Type I/II DW points, one that is expressed in terms of directional derivatives and is easier to verify in practice than checking for invariant horospheres.  
To state it, set (for any function $\phi\in\mathcal{S}_2$ such that $\tau\in\partial\DD^2$ is a B-point)
$$K_{\tau}(M)=\frac{D_{-(\tau^1, \tau^2 M)}\phi(\tau)}{-\phi(\tau)} \hspace{0.4 cm} (M>0).$$
It can be shown (see Proposition \ref{directdersincreasing}) that $K_{\tau}(M)$ is nonnegative and increasing with respect to $M$. This observation, combined with Theorems \ref{TypeIcharabridged}-\ref{TypeIIcharabridged}, leads to:
\begin{theorem}\label{ultimateDWchar}
Let $\phi\in\mathcal{S}_2$ and assume $\tau=(\tau^1, \tau^2)\in\partial\DD^2$ is a B-point for $\phi$ such that $\phi(\tau)=\tau^1.$ Assume also that $\phi\neq\pi^1.$
\begin{itemize}
    \item[(a)] If $|\tau^2|<1,$ then $\tau$ is a left Type I DW point that is also a C-point for $\phi$ if and only if $$K_{\tau}(M)=\alpha\le 1, \hspace{0.4 cm} \forall M>0.$$  In any other case, $\tau$ will be neither a left Type I nor a left Type II DW point.
    \item[(b)] If $|\tau^2|=1,$ then $\tau$ is a:  
    
    \begin{itemize}
        \item[(i)] left Type I DW point that is also a C-point if and only if $$K_{\tau}(M)=\alpha\le 1, \hspace{0.4 cm} \forall M>0;$$ 
        \item[(ii)] left Type I DW point that is not a C-point if and only if  $\{K_{\tau}(M)\}_M$ is non-constant and $$K_{\tau}(M)<1, \hspace{0.4 cm} \forall M>0;$$
        \item[(iii)] left Type II DW point if and only if  $\{K_{\tau}(M)\}_M$ is non-constant and there exists $A>0$ such that $$K_{\tau}(A)=1;$$ 
        \item[(iv)] neither a left Type I nor a left Type II DW point if and only if $$K_{\tau}(M)>1, \hspace{0.4 cm} \forall M>0.$$

    \end{itemize}
\end{itemize}
  There is an analogous statement for right Type I/II DW points (we need to assume that $\phi\neq \pi^2$).  
\end{theorem}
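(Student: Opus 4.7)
The plan is to derive Theorem \ref{ultimateDWchar} from Theorems \ref{TypeIcharabridged}(iv) and \ref{TypeIIcharabridged}(iii) combined with the structural properties of $M\mapsto K_\tau(M)$. By Theorem \ref{generaldir}(4), when $|\tau^2|=1$ we have
$$K_\tau(M)=||x^1_\tau(\delta_M)||^2+M\,||x^2_\tau(\delta_M)||^2,\qquad \delta_M=(\tau^1,\tau^2 M),$$
where $M\mapsto x_\tau(\delta_M)$ is a holomorphic $M$-valued function on $\mathbb{H}$. Hence $K_\tau$ is real-analytic on $(0,\infty)$, and by Proposition \ref{directdersincreasing} nonnegative and nondecreasing. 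Combined with the intermediate value theorem, these properties partition the possible behaviors of $K_\tau$ into exactly the four cases of (b): (i) constant with value $\le 1$, (ii) non-constant and strictly $<1$ throughout, (iii) non-constant with $K_\tau(A)=1$ at some $A>0$, or (iv) $>1$ everywhere. The key point is that a non-constant real-analytic nondecreasing function cannot equal $1$ on any interval, so in case (iii) the value $1$ is attained at an isolated point with $K_\tau<1$ below it and $K_\tau>1$ above it.

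For case (a), Lemma \ref{facialBisC} gives $Y_\tau=\{u_\tau\}$ with $u^2_\tau=0$, so $X_\tau=\{u_\tau\}$ is a singleton and Theorem \ref{facialB} automatically makes $\tau$ a C-point. In the formula of Theorem \ref{generaldir}(4), only the $j=1$ summand contributes, yielding $K_\tau(M)\equiv||u^1_\tau||^2$. Theorem \ref{TypeIcharabridged}(iv)--(v) then shows $\tau$ is a left Type I DW point iff $||u^1_\tau||\le 1$, i.e.\ iff $K_\tau\le 1$; otherwise $\tau$ is neither Type I (directly from the definition) nor Type II (since $\tau\notin\mathbb{T}^2$).

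For case (b), I will use a partition-matching argument. The LHS splits the set of B-points $\tau$ with $\phi(\tau)=\tau^1$ and $|\tau^2|=1$ into four disjoint exhaustive classes (Type I + C, Type I + not C, Type II, and neither), with disjointness of the last two using Definition \ref{TypeI+IIdef}(ii). I will establish four one-sided inclusions:
\begin{itemize}
\item[(I)] LHS (i) $\Rightarrow$ RHS (i): the C-point hypothesis forces $x_\tau(\delta_M)\equiv u_\tau$, so $K_\tau(M)=||u^1_\tau||^2+M\,||u^2_\tau||^2$ is affine in $M$; the Type I bound $K_\tau\le 1$ then forces $||u^2_\tau||=0$ and $K_\tau\equiv||u^1_\tau||^2\le 1$.
\item[(II)] RHS (ii) $\Rightarrow$ LHS (ii): Theorem \ref{TypeIcharabridged}(iv) gives Type I; were $\tau$ a C-point, the same affineness argument combined with $K_\tau\le 1$ would force $K_\tau$ constant, contradicting non-constancy.
\item[(III)] RHS (iii) $\Rightarrow$ LHS (iii): apply Theorem \ref{TypeIIcharabridged}(iii) with $K=A$; non-Type I is automatic since $K_\tau(M)>1$ for $M>A$ by the real-analyticity remark above.
\item[(IV)] RHS (iv) $\Rightarrow$ LHS (iv): Theorems \ref{TypeIcharabridged}(iv) and \ref{TypeIIcharabridged}(iii) respectively exclude Type I and Type II.
\end{itemize}
Since both sides are partitions, these four implications promote to the full biconditionals by elementary counting: any $\tau$ in LHS (j) for $j\in\{\text{ii,iii,iv}\}$ cannot lie in a different RHS class (else (I)--(IV) would place it in a different LHS class), so it must lie in RHS (j); and likewise any $\tau$ in RHS (i) must lie in LHS (i). The right Type I/II statements then follow by applying the left statements to $\widetilde\phi$.

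The subtlest step is less any individual implication and more the verification that the four RHS cases are genuinely mutually exclusive and exhaustive; specifically, real-analyticity is essential to rule out the pathological scenario of a non-constant $K_\tau$ that is $\le 1$ throughout but attains $1$ somewhere, a case which would otherwise conflate LHS classes (ii) and (iii) and break the partition-matching argument.
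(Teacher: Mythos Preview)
Your approach is essentially the same as the paper's — both derive the theorem by combining Theorems \ref{TypeIcharabridged} and \ref{TypeIIcharabridged} with Proposition \ref{directdersincreasing} and the C-point characterizations (Lemma \ref{facialBisC}, Theorems \ref{facialB} and \ref{nullcompgivesCpoint}). The paper's proof is a one-line citation of these ingredients; you have spelled out the details, and your treatment of case (a) and implications (II)--(IV) is correct.

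However, your partition-matching argument in case (b) has a small logical gap. You establish (I) $L(\text{i})\subset R(\text{i})$ and (II)--(IV) $R(j)\subset L(j)$ for $j\in\{\text{ii,iii,iv}\}$, then claim these four inclusions force all four to be equalities. But consider $\tau\in L(\text{ii})$: implications (III) and (IV) rule out $\tau\in R(\text{iii})$ or $\tau\in R(\text{iv})$, yet nothing you have written rules out $\tau\in R(\text{i})$, since (I) only goes from $L(\text{i})$ to $R(\text{i})$, not the reverse. The same issue arises for $\tau\in L(\text{iii})$ and $\tau\in L(\text{iv})$. The fix is immediate: prove $R(\text{i})\Rightarrow L(\text{i})$ directly. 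If $K_\tau$ is constant, Proposition \ref{directdersincreasing} gives $X_\tau=\{(x^1_\tau,0)\}$, hence $\tau$ is a C-point by Theorem \ref{facialB}; and $K_\tau\le 1$ gives Type I by Theorem \ref{TypeIcharabridged}(iv). With this one extra implication the partition argument closes.

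Separately, your appeal to real-analyticity is correct but unnecessary. Proposition \ref{directdersincreasing} already states that $K_\tau$ is either constant or \emph{strictly} increasing, which immediately rules out the ``non-constant, $\le 1$ throughout, and attaining $1$'' pathology you were worried about: if $K_\tau(A)=1$ and $K_\tau$ is strictly increasing, then $K_\tau(M)>1$ for all $M>A$. This dichotomy is exactly why the paper cites Proposition \ref{directdersincreasing} (and, behind it, Theorem \ref{nullcompgivesCpoint}) among the ingredients.
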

Using our work on DW points, we are able to offer the following refinements of Theorem \ref{HERVE's THEOREM}. 
\begin{theorem}\label{TypeITypeIIBBUTNOTC}
Assume $F=(\phi, \psi):\DD^2\to\DD^2$ is holomorphic, $\phi$ is left Type I and $\psi$ is right Type II. Let $\tau^1$ denote the common Denjoy-Wolff point of all slice functions $\phi_{\mu}.$ If there exists $\sigma\in\mathbb{T}$ such that $(\tau^1, \sigma)$ is a right Type II DW point for $\psi$ but not a C-point for $\phi$, then $F^n\to(\tau^1, \sigma)$ uniformly on compact subsets of $\DD^2.$
\end{theorem}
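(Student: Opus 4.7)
The plan is to combine Herv\'e's Theorem --- which pins the first coordinate of every cluster value of $\{F^n\}$ --- with a coupled horocycle-radius iteration at $(\tau^1,\sigma)$ whose strict contractivity depends crucially on the non-C-point hypothesis for $\phi$.

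By Theorem \ref{HERVE's THEOREM}(iii), every uniform-on-compacta cluster value of $\{F^n\}$ has the form $(\tau^1_*,h)$ with $\tau^1_*\in\mathbb{T}$ and $h\in\mathcal{S}_2$. Applying the one-variable Denjoy-Wolff theorem to a slice $\phi_\mu$ along a subsequence realising such a cluster value identifies $\tau^1_*$ with the common slice Denjoy-Wolff point $\tau^1$ of $\phi$. By normality it therefore suffices to prove $F^n(\lambda_0)\to(\tau^1,\sigma)$ for every fixed $\lambda_0\in\DD^2$.

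To this end, the next step is to extract a pair of coupled Julia-type inequalities from the models of $\phi$ and $\psi$ at $(\tau^1,\sigma)$. By Theorem \ref{TypeIcharabridged}(ii), this point is a left Type I DW point for $\phi$; by hypothesis, it is a right Type II DW point for $\psi$ with some maximum constant $A>0$. Writing $\rho(z,\omega):=|1-z\bar\omega|^2/(1-|z|^2)$, Corollaries \ref{typeIhorosph} and \ref{TypeIIhorosph} (resting on Theorem \ref{Juliarevisited} and the model expansion in Lemma \ref{basicmodelsetup}) yield
\begin{align*}
\rho(\phi(\lambda),\tau^1)&\le\|x^1\|^2\rho(\lambda^1,\tau^1)+\|x^2\|^2\rho(\lambda^2,\sigma),\\
\rho(\psi(\lambda),\sigma)&\le\|v^1\|^2\rho(\lambda^1,\tau^1)+\|v^2\|^2\rho(\lambda^2,\sigma),
\end{align*}
for every $\lambda\in\DD^2$ and any cluster witnesses $x$ in $Y_{(\tau^1,\sigma)}$ for $\phi$ and $v$ in $Y_{(\tau^1,\sigma)}$ for $\psi$. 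The Type II structure provides a witness $v$ with $A\|v^1\|^2+\|v^2\|^2\le 1$ and $\|v^2\|^2<1$; the non-C-point hypothesis on $\phi$, via Theorem \ref{ultimateDWchar}(b)(ii), gives $K_{(\tau^1,\sigma)}(M):=\|x(M)^1\|^2+M\|x(M)^2\|^2<1$ strictly for every $M>0$, where $x(M)$ is the cluster witness in direction $(\tau^1,\sigma M)$ supplied by Theorem \ref{generaldir}. Setting $a_n:=\rho(\phi^{(n)}(\lambda_0),\tau^1)$ and $b_n:=\rho(\psi^{(n)}(\lambda_0),\sigma)$ and choosing $M\ge 1/A$ with $b_0\le Ma_0$, the $F$-invariance of the product horosphere $E((\tau^1,\sigma),a_0,Ma_0)$ propagates $b_n\le Ma_n$ to all $n$, so the first inequality at $\lambda=F^n(\lambda_0)$ reduces to $a_{n+1}\le K_{(\tau^1,\sigma)}(M)a_n$ with contraction factor strictly below $1$. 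Hence $a_n\to 0$ geometrically; the second inequality, inhomogeneous linear with contracting homogeneous rate $\|v^2\|^2<1$ and vanishing forcing $a_n$, then yields $b_n\to 0$. Consequently $F^n(\lambda_0)\to(\tau^1,\sigma)$ pointwise, and normality upgrades this to uniform convergence on compact subsets of $\DD^2$.

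The main obstacle is keeping the ratio $b_n/a_n$ bounded along the iteration so that the $\phi$-contraction factor $K_{(\tau^1,\sigma)}(M)$ genuinely remains strictly below $1$; this is precisely what the non-C-point hypothesis accomplishes, providing a strict bound $K_{(\tau^1,\sigma)}(M)<1$ valid for all $M>0$ that is absent in the generic left Type I / right Type II configuration considered by Herv\'e and without which the second coordinate of $F^n$ need not converge at all.
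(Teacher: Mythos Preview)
Your additive Julia inequalities
\[
\rho(\phi(\lambda),\tau^1)\le\|x^1\|^2\rho(\lambda^1,\tau^1)+\|x^2\|^2\rho(\lambda^2,\sigma),\qquad
\rho(\psi(\lambda),\sigma)\le\|v^1\|^2\rho(\lambda^1,\tau^1)+\|v^2\|^2\rho(\lambda^2,\sigma)
\]
are correct (they follow from Lemma~\ref{basicmodelsetup} via a Cauchy--Schwarz application different from the one in the proof of Theorem~\ref{Juliarevisited}), though note that Corollaries~\ref{typeIhorosph} and~\ref{TypeIIhorosph} themselves only state the $\max$-type versions, so you would need to supply that short derivation.

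The real problem is the sentence ``the $F$-invariance of the product horosphere $E((\tau^1,\sigma),a_0,Ma_0)$ propagates $b_n\le Ma_n$ to all $n$.'' This is false. Invariance of $E(\tau,a_0,Ma_0)$ only gives $a_n\le a_0$ and $b_n\le Ma_0$; it says nothing about the ratio $b_n/a_n$. Since you have only \emph{upper} bounds on $a_{n+1}$, there is no way to conclude $b_{n+1}\le Ma_{n+1}$ from $b_n\le Ma_n$, and indeed the cone $\{b\le Ma\}$ is not invariant under a recursion governed by one-sided inequalities. Consequently the reduction ``$a_{n+1}\le K_{(\tau^1,\sigma)}(M)\,a_n$'' is unjustified, and the argument breaks.

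Your framework can be repaired: choosing $M>1/A$ strictly, one has $\|x(M)^1\|^2+M\|x(M)^2\|^2=K_{(\tau^1,\sigma)}(M)<1$ and $\|v^1\|^2/M+\|v^2\|^2<1$ (the latter since $A\|v^1\|^2+\|v^2\|^2\le 1$ with $\|v^2\|^2<1$), so the $2\times 2$ nonnegative matrix governing $(a_n,b_n)$ is a strict contraction in the weighted norm $\max\{a,b/M\}$, forcing $(a_n,b_n)\to 0$ directly. This is genuinely different from the paper's route, which avoids the additive inequality altogether and instead uses the $\max$-type bound $A_{n+1}\le\max\{A_n/r_k,\,B_n/M_k\}$ from Corollary~\ref{typeIhorosph} with a \emph{family} of parameters $(r_k,M_k)$, $r_k\downarrow 1$, $M_k\uparrow\infty$, together with the mere boundedness of $B_n$ coming from a single invariant horosphere, to derive a contradiction from $A_n\to\rho>0$; the decay of $B_n$ is then obtained in a second step from Corollary~\ref{TypeIIhorosph}. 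The paper's argument never needs to control $b_n/a_n$ and never fixes a single $M$, whereas your (corrected) approach packages everything into one linear contraction---cleaner once stated, but requiring the additive inequality and the spectral/weighted-norm observation that you did not supply.
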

\begin{theorem}\label{TypeITypeIBBUTNOTC}
Assume $F=(\phi, \psi):\DD^2\to\DD^2$ is holomorphic, $\phi$ is left Type I and $\psi$ is right Type I. Let $\tau^1$ and $\tau^2$ denote the common Denjoy-Wolff points of all slice functions $\phi(\cdot, \mu)$ and $\psi(\lambda, \cdot)$, respectively. If $\tau=(\tau^1, \tau^2)$ is not a C-point for $\phi$, then every cluster point of $\{F^n\}$ will have the form $(\tau^1, h),$ where $h$ is either a holomorphic function $\DD^2\to\DD$ or the constant $\tau^2$. 
An analogous conclusion can be reached if $\tau$ is not a C-point for $\psi.$
\end{theorem}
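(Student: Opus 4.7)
The plan is to argue by contradiction, using Hervé's dichotomy (Theorem \ref{HERVE's THEOREM}(ii)) together with the weighted Julia inequality (Corollary \ref{typeIhorosph}) coming from the non-C-point hypothesis at $\tau = (\tau^1, \tau^2)$. By Theorem \ref{HERVE's THEOREM}(ii), the cluster set of $\{F^n\}$ falls into one of two sub-cases: (a) every cluster point has the form $(\tau^1, h)$, or (b) every cluster point has the form $(g, \tau^2)$. I want to show that (a) holds. Suppose instead (a) fails, so some cluster point has form $(g, \tau^2)$ with $g : \DD^2 \to \DD$ holomorphic and $g \not\equiv \tau^1$. Pick $\lambda_0 \in \DD^2$ with $w_0 := g(\lambda_0) \in \DD \setminus \{\tau^1\}$, and extract a subsequence $F^{n_k}(\lambda_0) \to (w_0, \tau^2)$. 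Writing $F^m(\lambda_0) = (\alpha_m, \beta_m)$, and using that sub-case (b) forces every cluster point of $\{F^m(\lambda_0)\}$ to have second coordinate $\tau^2$, the subsequence principle yields $\beta_m \to \tau^2$ along the full sequence.

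To obtain a contraction on the first coordinate, I would invoke the weighted Julia inequality at $\tau$. By Theorem \ref{TypeIcharabridged}, $\tau$ is a left Type I DW point for $\phi$; since it is not a C-point, Theorem \ref{ultimateDWchar}(b)(ii) gives $K_\tau(M) < 1$ for every $M > 0$. Corollary \ref{typeIhorosph} then yields
\[
\phi\bigl(E(\tau, R, R/M)\bigr) \subset E\bigl(\tau^1, K_\tau(M) R\bigr)
\]
for all $R, M > 0$. In parallel, the right Type I structure of $\psi$ gives, via the one-variable Julia inequality applied slice-wise at $\tau^2$, the invariance $\psi(\zeta, E(\tau^2, r)) \subset E(\tau^2, r)$ for every $\zeta \in \DD$ and $r > 0$.

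The contradiction would then be extracted as follows. Fix $m_0 = n_k$ for $k$ large, and choose $R_0, r_0 > 0$ so that $\alpha_{m_0} \in E(\tau^1, R_0)$, $\beta_{m_0} \in E(\tau^2, r_0)$, and $R_0$ also exceeds $|w_0 - \tau^1|^2 / (1 - |w_0|^2)$. Set $M_0 := R_0 / r_0$ and $\kappa := K_\tau(M_0) \in (0, 1)$. Applying the two inclusions above, one obtains $\alpha_{m_0 + 1} \in E(\tau^1, \kappa R_0)$ and $\beta_{m_0 + 1} \in E(\tau^2, r_0)$, so $F^{m_0 + 1}(\lambda_0) \in E(\tau, \kappa R_0, \kappa R_0 / (\kappa M_0))$: a weighted horosphere with new weight $\kappa M_0 < M_0$. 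By monotonicity of $K_\tau$ (Proposition \ref{directdersincreasing}), the contraction factor at the next step is $K_\tau(\kappa M_0) \le \kappa$, and by induction $\alpha_{m_0 + j} \in E(\tau^1, \kappa^j R_0)$ for all $j \ge 0$. Since $\bigcap_j \overline{E(\tau^1, \kappa^j R_0)} = \{\tau^1\}$, taking $k' > k$ with $n_{k'} - m_0 \to \infty$ forces $\alpha_{n_{k'}} \to \tau^1$, contradicting $\alpha_{n_{k'}} \to w_0 \neq \tau^1$. The analogous conclusion when $\tau$ is not a C-point for $\psi$ follows by the same argument applied to the argument-swapped map, giving sub-case (b) in place of (a).

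The main obstacle is making the iteration step preserve the weighted-horosphere structure: the second coordinate stays in $E(\tau^2, r_0)$ by slice Julia for $\psi$, while the first coordinate's horosphere radius shrinks by the factor $\kappa$ thanks to the contractive inclusion. The key use of the non-C-point hypothesis is the strict inequality $\kappa = K_\tau(M_0) < 1$; without it, no collapse of the first coordinate onto $\tau^1$ would follow. Monotonicity of $K_\tau$ is what guarantees that the contraction factor remains $\le \kappa$ at every subsequent step, despite the effective weight $M$ shrinking at each iteration.
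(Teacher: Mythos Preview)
Your overall strategy---use the weighted Julia inequality at the non-C-point $\tau$ to force the first coordinate of the orbit into shrinking horocycles---is the right one, and is also what the paper does. However, the inductive step as written contains a genuine error in the direction of the monotonicity.

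From Theorem~\ref{Juliarevisited}(iii) the correct inclusion is $\phi(E(\tau,R,MR))\subset E(\tau^1,K_\tau(M)R)$, i.e.\ the second radius is $MR$, not $R/M$. With $(\alpha_{m_0},\beta_{m_0})\in E(\tau,R_0,r_0)$ one must therefore take $M_0=r_0/R_0$ (not $R_0/r_0$), and after one step $(\alpha_{m_0+1},\beta_{m_0+1})\in E(\tau,\kappa R_0,r_0)$ with $\kappa=K_\tau(M_0)$. The new effective weight is then $r_0/(\kappa R_0)=M_0/\kappa>M_0$, and since $K_\tau$ is \emph{increasing}, the next contraction factor satisfies $K_\tau(M_0/\kappa)\ge\kappa$, not $\le\kappa$. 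Thus the claimed geometric bound $\alpha_{m_0+j}\in E(\tau^1,\kappa^j R_0)$ is not justified; the contraction factors drift upward toward~$1$ and an extra argument is needed to show the product still tends to~$0$.

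The paper closes this gap cleanly: writing $A_n=\frac{|\tau^1-\phi_n|^2}{1-|\phi_n|^2}$ and $B_n=\frac{|\tau^2-\psi_n|^2}{1-|\psi_n|^2}$, both are non-increasing by the slice Julia inequalities (Corollary~\ref{typeIhorosph}), so $B_n\le B$ is bounded. If $A_n\downarrow\rho>0$, pick $M_k\to\infty$ and the associated $r_k>1$ from Corollary~\ref{typeIhorosph} so that $A_{n+1}\le\max\{A_n/r_k,\,B_n/M_k\}$; choosing $k$ with $B/M_k<\rho$ and then $n$ large enough that $A_n/r_k<\rho$ gives $A_{n+1}<\rho$, a contradiction. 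This directly yields $A_n\to 0$ for \emph{every} $\lambda_0$, so every cluster point has first coordinate $\tau^1$---no appeal to Herv\'e's dichotomy is needed, and indeed the paper notes that its proof is independent of Theorem~\ref{HERVE's THEOREM}.
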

Applications are contained in Examples \ref{exmp1} and \ref{exmp2}. A further refinement can be found in Theorem \ref{motionin(II,II)}.

 \small
\section{B-POINTS AND DIRECTIONAL DERIVATIVES ALONG $(\tau^1, \tau^2 M)$} \label{generalBCdir}
\large This section contains several technical results that build upon the model theory of \cite{AMYcaratheodory} and \cite{AMYfacial}, the highlights being Theorems \ref{nullcompgivesCpoint} and \ref{BbutnotC}-\ref{Juliarevisited}. These will be critical for our work in Sections \ref{charactDW}, \ref{refineHerve}, but are also interesting in their own right.  \par

Now, choose an arbitrary $\phi\in\mathcal{S}_2$ with model $(M, u)$ and a B-point $\tau=(\tau^1, \tau^2)\in\partial\DD^2$. These will be fixed for the remainder of this section. Recall that we can define  
$$x_{\tau}(\delta)=\lim_{\tau-z\delta\xrightarrow{nt}\tau}u_{\tau-z\delta},$$
for any $\delta\in\mathbb{H}(\tau)$, where the limit is with respect to the norm of $M$. The following easy consequence of (\ref{basicmodelsetupeq}) will be used repeatedly throughout the paper. 
\begin{lemma}\label{comparisonsetup}
Assume $\tau\in\mathbb{T}^2$. Then, for any $u_{\tau}\in Y_{\tau}$ we have
$$\langle x^1_{\tau}(\delta), u^1_{\tau}\rangle+\frac{\overline{\tau^2}\delta^2}{\overline{\tau^1}\delta^1}\langle  x^2_{\tau}(\delta), u^2_{\tau}\rangle=||x^1_{\tau}(\delta)||^2+\frac{\overline{\tau^2}\delta^2}{\overline{\tau^1}\delta^1}||x^2_{\tau}(\delta)||^2,$$
for all $\delta\in\mathbb{H}(\tau).$
\end{lemma}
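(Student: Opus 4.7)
The plan is to apply the model identity (\ref{basicmodelsetupeq}) from Lemma \ref{basicmodelsetup} twice, with two different choices of element in $Y_\tau$, specialize to the sequence $\lambda = \tau - z\delta$ approaching $\tau$ nontangentially, and then equate the two resulting limits.

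More concretely, since $\tau \in \mathbb{T}^2$, equation (\ref{basicmodelsetupeq}) with $\lambda = \tau - z\delta$ simplifies (using $\tau^j\overline{\tau^j}=1$) to
\begin{equation*}
1-\phi(\tau - z\delta)\overline{\phi(\tau)} \;=\; z\,\overline{\tau^1}\delta^1\,\langle u^1_{\tau-z\delta}, x^1\rangle \;+\; z\,\overline{\tau^2}\delta^2\,\langle u^2_{\tau-z\delta}, x^2\rangle
\end{equation*}
for every $x\in Y_\tau$. First I would apply this to $x = u_\tau$, the arbitrary element of $Y_\tau$ in the lemma's statement, divide through by $z\,\overline{\tau^1}\delta^1$ (which is nonzero, since $\delta \in \mathbb{H}(\tau)$ forces $\overline{\tau^j}\delta^j\in\mathbb{H}$), and then let $\tau - z\delta\xrightarrow{nt}\tau$; by Theorem \ref{generaldir} the vectors $u_{\tau-z\delta}$ converge in norm to $x_\tau(\delta)$, so the right-hand side tends to
\begin{equation*}
\langle x^1_\tau(\delta), u^1_\tau\rangle \;+\; \frac{\overline{\tau^2}\delta^2}{\overline{\tau^1}\delta^1}\,\langle x^2_\tau(\delta), u^2_\tau\rangle.
\end{equation*}

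Next I would repeat the same computation with $x = x_\tau(\delta)$ in place of $u_\tau$, which is legitimate because Theorem \ref{generaldir}(2) gives $x_\tau(\delta)\in X_\tau\subset Y_\tau$. The same nontangential limit procedure now yields the same left-hand side but with $\langle x^j_\tau(\delta), x^j_\tau(\delta)\rangle = \|x^j_\tau(\delta)\|^2$ on the right. Since the two choices of $x$ produce the same left-hand side (it depends only on $\lambda$ and $\tau$, not on $x$), equating the limits delivers the claimed identity.

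The only delicate point to verify carefully is that the nontangential limit of the left-hand side really does exist and equal both expressions — but this is automatic, since for fixed $\delta$ the function $z\mapsto 1-\phi(\tau-z\delta)\overline{\phi(\tau)}$ is analytic near $0$ and vanishes at $z=0$ (because $|\phi(\tau)|=1$ as $\tau$ is a B-point with image in $\mathbb{T}$, wait — we only get $\phi(\tau)\overline{\phi(\tau)}=1$ when $|\phi(\tau)|=1$; in fact Lemma \ref{basicmodelsetup} guarantees this holds since the right side of (\ref{basicmodelsetupeq}) vanishes at $\lambda=\tau$). So dividing by $z$ is harmless and the two independent computations must agree, which is precisely the content of the lemma. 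No genuine obstacle arises; the entire argument is a bookkeeping exercise in applying the model identity to two test vectors and matching the resulting nontangential limits.
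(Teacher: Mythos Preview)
Your proposal is correct and follows essentially the same route as the paper: apply (\ref{basicmodelsetupeq}) with the two choices $u_\tau$ and $x_\tau(\delta)\in X_\tau\subset Y_\tau$, specialize to $\lambda=\tau-r\delta$, and pass to the limit using the norm convergence $u_{\tau-r\delta}\to x_\tau(\delta)$ from Theorem \ref{generaldir}. The paper equates the two right-hand sides first (since both equal $(1-\phi(\lambda)\overline{\phi(\tau)})/(1-\lambda^1\overline{\tau^1})$) and then specializes, while you specialize first and then equate limits, but this is a cosmetic difference; your aside about analyticity near $z=0$ is not quite right (since $\tau\notin\DD^2$) and also unnecessary, as the existence of the limit already follows from the norm convergence you invoked.
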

\begin{proof}
Applying (\ref{basicmodelsetupeq}) twice gives us
$$\langle u^1_{\lambda}, u^1_{\tau}\rangle+\frac{1-\lambda^2\overline{\tau^2}}{1-\lambda^1\overline{\tau^1}}\langle u^2_{\lambda}, u^2_{\tau}\rangle=\langle u^1_{\lambda}, x^1_{\tau}(\delta)\rangle+\frac{1-\lambda^2\overline{\tau^2}}{1-\lambda^1\overline{\tau^1}}\langle u^2_{\lambda}, x^2_{\tau}(\delta)\rangle,$$
for all $\lambda\in\DD^2$ and $\delta\in\mathbb{H}(\tau).$ Setting $\lambda=\tau-r\delta$ and letting $r\to 0+$ then finishes off the proof.
\end{proof}
We also require the following lemma.
\begin{lemma} \label{twonullcoords}
   Assume $\tau\in\mathbb{T}^2$. If $u_{\tau}, v_{\tau}\in Y_{\tau}$ are such that $u^i_{\tau}=v^i_{\tau}=0$ for some $i\in\{1, 2\},$ we must have $u_{\tau}=v_{\tau}.$
\end{lemma}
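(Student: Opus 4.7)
The plan is to extract both $u_\tau$ and $v_\tau$ directly from the identity (\ref{basicmodelsetupeq}) of Lemma \ref{basicmodelsetup} and then invoke the density hypothesis on $\{u^j(\lambda) : \lambda \in \DD^2\}$ in $M^j$.

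Concretely, since $\tau = (\tau^1, \tau^2) \in \mathbb{T}^2$, both indices appear in the sum in (\ref{basicmodelsetupeq}), so for every $\lambda \in \DD^2$ I would write
\begin{equation*}
1 - \phi(\lambda)\overline{\phi(\tau)} = (1-\lambda^1\overline{\tau^1})\langle u^1_\lambda, u^1_\tau\rangle + (1-\lambda^2\overline{\tau^2})\langle u^2_\lambda, u^2_\tau\rangle,
\end{equation*}
and likewise with $v_\tau$ in place of $u_\tau$. Subtracting the two identities kills the left-hand side, leaving
\begin{equation*}
(1-\lambda^1\overline{\tau^1})\langle u^1_\lambda, u^1_\tau - v^1_\tau\rangle + (1-\lambda^2\overline{\tau^2})\langle u^2_\lambda, u^2_\tau - v^2_\tau\rangle = 0.
\end{equation*}

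Without loss of generality assume $i = 1$, so $u^1_\tau = v^1_\tau = 0$ and the first term drops out. Then for the nonzero factor $(1-\lambda^2\overline{\tau^2})$ (which vanishes only on the slice $\lambda^2 = \tau^2$), I obtain
\begin{equation*}
\langle u^2_\lambda, u^2_\tau - v^2_\tau\rangle = 0
\end{equation*}
on the open set where $\lambda^2 \neq \tau^2$, and hence (by the holomorphy of $\lambda \mapsto u_\lambda$) on all of $\DD^2$. Invoking the standing normalization that $\{u^2_\lambda : \lambda \in \DD^2\}$ spans a dense subspace of $M^2$ then forces $u^2_\tau = v^2_\tau$. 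Combined with $u^1_\tau = v^1_\tau = 0$ this yields $u_\tau = v_\tau$.

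The argument is essentially mechanical once one sets it up; the only subtlety I would watch for is the density normalization, which is only stated componentwise in $M^j$ (not jointly for $u_\lambda$ in $M$). That is exactly what makes the separate-component hypothesis $u^1_\tau = v^1_\tau = 0$ sufficient to conclude equality of the remaining $M^2$-components, and thus of the full vectors.
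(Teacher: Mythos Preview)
Your proof is correct and follows essentially the same route as the paper's: subtract the two instances of (\ref{basicmodelsetupeq}), use the vanishing hypothesis on one component to reduce to $\langle u^j_\lambda, u^j_\tau - v^j_\tau\rangle = 0$ for all $\lambda$, and conclude. The only cosmetic differences are that the paper takes $i=2$ rather than $i=1$, and that where you invoke the standing density normalization on $\{u^j_\lambda\}$ in $M^j$, the paper instead uses directly that $u^j_\tau$ and $v^j_\tau$ are themselves weak limits of such vectors (which gives $\|u^j_\tau\|^2 = \|v^j_\tau\|^2 = \langle u^j_\tau, v^j_\tau\rangle$ and hence equality); these two endings are equivalent. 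One small remark: since $\tau^2 \in \mathbb{T}$ and $\lambda^2 \in \DD$, the factor $1-\lambda^2\overline{\tau^2}$ never vanishes on $\DD^2$, so your appeal to holomorphy to pass from ``$\lambda^2\neq\tau^2$'' to all of $\DD^2$ is unnecessary.
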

\begin{proof}
    Without loss of generality, assume $i=2.$ Applying (\ref{basicmodelsetupeq}) twice, we obtain
$$ 1-\phi(\lambda)\overline{\phi(\tau)}=(1-\lambda^1\overline{\tau^1})\langle u^1_{\lambda}, u^1_{\tau}\rangle, $$
$$=(1-\lambda^1\overline{\tau^1})\langle u^1_{\lambda}, v^1_{\tau}\rangle,$$
for all $\lambda\in\DD^2.$ Thus, $\langle u^1_{\lambda}, u^1_{\tau}-v^1_{\tau}\rangle=0$ for all $\lambda.$ This equality, combined with the fact that both $v^1_{\tau}$ and $v^1_{\tau}$ are weak limits of vectors in the span of $\{u^1_{\lambda} : \lambda\in\DD^2\}$ implies that 
$$||u^1_{\tau}||^2=||v^1_{\tau}||^2=\langle u^1_{\tau}, v^1_{\tau}\rangle.$$
Thus, $v^1_{\tau}=u^1_{\tau}$ and we are done.
\end{proof}
Our next result shows that the presence of vectors with null components in $X_{\tau}$ has a surprisingly strong impact on the boundary regularity of the function. We exclude facial B-points from our theorem, since they are automatically C-points.
\begin{theorem}\label{nullcompgivesCpoint}
 Assume $\tau\in\mathbb{T}^2$ and also that there exists $x_{\tau}(\delta)\in X_{\tau}$ with $x^i_{\tau}(\delta)=0$ for some $i\in\{1, 2\}.$ Then,
 $\tau$ is a C-point for $\phi.$  
\end{theorem}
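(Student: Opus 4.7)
The plan is to show that $X_{\tau}$ is a singleton, which by Theorem \ref{facialB} is equivalent to $\tau$ being a C-point. Assume without loss of generality that $i=2$, so $x^2_{\tau}(\delta_0)=0$ for some $\delta_0\in\mathbb{H}(\tau)$.

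First I will show that the map $\delta\mapsto x_{\tau}(\delta)$ is a constant function on $\mathbb{H}(\tau)$. Applying Lemma \ref{comparisonsetup} with $u_{\tau}=x_{\tau}(\delta_0)$ at an arbitrary $\delta$, and then with the roles of $\delta$ and $\delta_0$ swapped (namely $u_{\tau}=x_{\tau}(\delta)$ and the lemma's free variable set to $\delta_0$), and using $x^2_{\tau}(\delta_0)=0$ on the swapped side, I obtain, after conjugating,
$$||x^1_{\tau}(\delta)||^2 + w(\delta)\,||x^2_{\tau}(\delta)||^2 = ||x^1_{\tau}(\delta_0)||^2 =: \alpha,\qquad w(\delta)=\frac{\overline{\tau^2}\delta^2}{\overline{\tau^1}\delta^1}.$$
Since the right-hand side is real while $w(\delta)$ takes non-real values as $\delta$ varies, we must have $||x^2_{\tau}(\delta)||^2\,\Im w(\delta)=0$; real-analyticity of $\delta\mapsto||x^2_{\tau}(\delta)||^2$ then forces $x^2_{\tau}\equiv 0$ on $\mathbb{H}(\tau)$ and $||x^1_{\tau}(\delta)||^2\equiv\alpha$. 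A further application of Lemma \ref{comparisonsetup} with $u_{\tau}=x_{\tau}(\delta')$ gives $\langle x^1_{\tau}(\delta),x^1_{\tau}(\delta')\rangle=\alpha$, and the Cauchy-Schwarz equality case forces $x_{\tau}(\delta)\equiv x^*$, a constant with $x^{*,2}=0$ and $||x^{*,1}||^2=\alpha$. Theorem \ref{generaldir}(4) then yields $D_{-\delta}\phi(\tau)=-\alpha\phi(\tau)\overline{\tau^1}\delta^1$ for all $\delta\in\mathbb{H}(\tau)$.

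Now let $y_{\tau}\in X_{\tau}$ arise from a nontangential sequence $\mu_n=\tau-r_n\delta_n\to\tau$ with $r_n=1-||\mu_n||$; nontangentiality forces $\{\delta_n\}$ to be bounded and moreover $\Re(\overline{\tau^j}\delta^j_n)\ge 1$ uniformly, so after passing to a subsequence $\delta_n\to\delta^*$ with $\Re(\overline{\tau^j}(\delta^*)^j)\ge 1$. Applying Lemma \ref{basicmodelsetup} with $x=y_{\tau}$ along the radial path $\lambda=\tau-r\delta_0$, dividing by $r$ and letting $r\to 0+$ via the expansion $\phi(\tau-r\delta_0)=\phi(\tau)+rD_{-\delta_0}\phi(\tau)+o(r)$ together with the explicit formula for $D_{-\delta_0}\phi(\tau)$, I obtain $\langle x^{*,1},y^1_{\tau}\rangle=\alpha$. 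Then applying Lemma \ref{basicmodelsetup} once with $x=x^*$ and once with $x=y_{\tau}$, both at $\lambda=\mu_n$, dividing by $r_n$, and passing to the limit using the weak convergence $u_{\mu_n}\to y_{\tau}$, I equate the two resulting expressions for $(1-\phi(\mu_n)\overline{\phi(\tau)})/r_n$ to get
$$\overline{\tau^1}(\delta^*)^1\,\bigl[\alpha-||y^1_{\tau}||^2\bigr]=\overline{\tau^2}(\delta^*)^2\,||y^2_{\tau}||^2.$$
Cauchy-Schwarz applied to $\langle x^{*,1},y^1_{\tau}\rangle=\alpha=||x^{*,1}||^2$ forces $||y^1_{\tau}||^2\ge\alpha$, so the bracket on the left is non-positive while the right side is non-negative; taking real parts and using $\Re(\overline{\tau^j}(\delta^*)^j)>0$ forces both sides to vanish. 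Hence $y^2_{\tau}=0$ and $||y^1_{\tau}||=||x^{*,1}||$, and Cauchy-Schwarz equality then gives $y^1_{\tau}=x^{*,1}$. Thus $y_{\tau}=x^*$, $X_{\tau}=\{x^*\}$, and $\tau$ is a C-point.

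The main obstacle is the nontangential comparison in the last step: the two applications of Lemma \ref{basicmodelsetup} along the non-radial path $\mu_n$ must yield compatible asymptotics for $(1-\phi(\mu_n)\overline{\phi(\tau)})/r_n$, even though the behavior of $\phi$ along a varying direction $\delta_n$ is not \emph{a priori} controlled by a single directional-derivative expansion. The resolution is twofold --- the constancy of $x_{\tau}$ established earlier lets the scalar identity $\langle x^{*,1},y^1_{\tau}\rangle=\alpha$ be extracted from a purely radial sub-argument, while the uniform nontangential bound $\Re(\overline{\tau^j}\delta^j_n)\ge 1$ keeps $\delta^*$ strictly inside $\mathbb{H}(\tau)$, so the final positivity/real-part argument can conclude.
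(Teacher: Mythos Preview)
Your proof is correct, and in two respects it is cleaner and more self-contained than the paper's.

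\textbf{First half (constancy of $x_{\tau}$).} The paper applies Lemma \ref{comparisonsetup} only once (with $u_{\tau}=x_{\tau}(\delta_0)$), restricts to an open set where $\Re w(\delta)>0$, and then runs a case analysis (either $\|x^1_{\tau}(\delta)\|$ is constantly equal to $\|x^1_{\tau}(\delta_0)\|$ on that set, or there is a point of strict inequality, leading to a Cauchy--Schwarz contradiction after a second application of the lemma). Your approach applies the lemma \emph{twice} from the outset, symmetrically in $\delta$ and $\delta_0$, and equates the two to get the scalar identity $\|x^1_{\tau}(\delta)\|^2+w(\delta)\|x^2_{\tau}(\delta)\|^2=\alpha$ directly. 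The ``imaginary part must vanish'' observation then kills $x^2_{\tau}$ in one stroke, avoiding the case split. This is a genuine simplification.

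\textbf{Second half (passing from constancy of $x_{\tau}$ to $X_{\tau}$ being a singleton).} After showing $x_{\tau}(\cdot)\equiv x^*$, the paper simply asserts $X_{\tau}=\{x^*\}$ and invokes Theorem \ref{facialB}; this relies implicitly on the fact (established in the cited Agler--McCarthy--Young paper) that $X_{\tau}$ coincides with the range of $\delta\mapsto x_{\tau}(\delta)$. You instead prove this directly: given an arbitrary weak limit $y_{\tau}\in X_{\tau}$ along a nontangential sequence $\mu_n=\tau-r_n\delta_n$, you extract a subsequential limit $\delta^*\in\mathbb{H}(\tau)$ (your bound $\Re(\overline{\tau^j}\delta^j_n)\ge 1$ is indeed correct, since $r_n=1-\|\mu_n\|\le 1-|\mu_n^j|\le 1-\Re(\overline{\tau^j}\mu_n^j)$), compare the two expressions for $(1-\phi(\mu_n)\overline{\phi(\tau)})/r_n$ coming from Lemma \ref{basicmodelsetup} with $x=x^*$ and $x=y_{\tau}$, and use a sign argument to force $y_{\tau}=x^*$. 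This makes the proof independent of the finer structure results in the referenced paper, at the cost of an extra paragraph.

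Both arguments are valid; yours trades an external citation for a short explicit computation, and streamlines the first half.
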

\begin{proof}
Without loss of generality, assume that there exists  $x_{\tau}(\delta_0)\in X_{\tau}$ with $x^2_{\tau}(\delta_0)=0$. We may assume that $x^1_{\tau}(\delta_0)\neq 0$, else $\phi$ would be a unimodular constant. In view of Lemma \ref{comparisonsetup}, we obtain 
\begin{equation}\label{someq}
 \langle x^1_{\tau}(\delta), x^1_{\tau}(\delta_0)\rangle=||x^1_{\tau}(\delta)||^2+\frac{\overline{\tau^2}\delta^2}{\overline{\tau^1}\delta^1}||x^2_{\tau}(\delta)||^2,   
\end{equation}
for all $\delta\in\mathbb{H}(\tau).$ Choose any open subset $\Omega$ of $\mathbb{H}(\tau)$ with the property that $\frac{\overline{\tau^2}\delta^2}{\overline{\tau^1}\delta^1}$ has positive real part for all $\delta\in\Omega.$ (\ref{someq}) then implies that $$||x^1_{\tau}(\delta)||\le ||x^1_{\tau}(\delta_0)||$$ for all $\delta\in\Omega.$ Indeed, if this were not the case, we would be able to write
$$\Re \langle x^1_{\tau}(\delta), x^1_{\tau}(\delta_0)\rangle\le || x^1_{\tau}(\delta)||\cdot ||x^1_{\tau}(\delta_0)||  $$
$$< ||x^1_{\tau}(\delta)||^2 $$
$$\le ||x^1_{\tau}(\delta)||^2 +\Re\bigg(\frac{\overline{\tau^2}\delta^2}{\overline{\tau^1}\delta^1}\bigg)||x^2_{\tau}(\delta)||^2 $$
whenever $\delta\in\Omega,$ a contradiction. \par 
Now, assume $||x^1_{\tau}(\delta)||=||x^1_{\tau}(\delta_0)||$ for all $\delta\in\Omega.$ The previous chain of inequalities then implies that
$$\langle x^1_{\tau}(\delta), x^1_{\tau}(\delta_0)\rangle=||x^1_{\tau}(\delta)||^2=|| x^1_{\tau}(\delta_0) ||^2,$$
for all $\delta\in\Omega.$ This gives us $x^1_{\tau}(\delta)=x^1_{\tau}(\delta_0)$ on $\Omega$, and hence also on $\mathbb{H}(\tau)$. In view of (\ref{someq}), we obtain that $x^2_{\tau}(\cdot)$ must be identically zero. Hence, $X_{\tau}=\{(x^1_{\tau}(\delta_0), 0)\}$ and we obtain (by Lemma \ref{facialB}) that $\tau$ is a C-point. \par 
Assume, on the other hand, that we can find $\delta_1\in\mathbb{H}(\tau)$ such that $||x^1_{\tau}(\delta_1)||<||x^1_{\tau}(\delta_0)||$. Applying \ref{basicmodelsetupeq} again, with $\delta=\delta_0$ and $u_{\tau}=x_{\tau}(\delta_1)$, we deduce that 
$$ \langle x^1_{\tau}(\delta_0),x^1_{\tau}(\delta_1)\rangle=||x^1_{\tau}(\delta_0)||^2,$$
a contradiction. This concludes the proof.
\end{proof}

\begin{remark}
If we merely assume the existence of $u_{\tau}\in Y_{\tau}$ such that $u^i_{\tau}=0$ for some $i\in\{1, 2\}$, $\tau$ will not necessarily be a C-point; see Example \ref{exmp1}.
\end{remark}

Next, we show that the directional derivatives of $\phi$  along $(\tau^1, \tau^2M)$ can be naturally associated with an increasing (with respect to $M$) sequence  of positive numbers. Indeed, put $\delta_M=(\tau^1, \tau^2M)$ and define 
$$K_{\tau}(M):=\frac{D_{-\delta_M}\phi(\tau)}{-\phi(\tau)}=||x^1_{\tau}(\delta_M)||^2+M||x^2_{\tau}(\delta_M)||^2,$$
for all $M>0.$
\begin{proposition} \label{directdersincreasing}
For any $u_{\tau}\in Y_{\tau}$ we have 
$$K_{\tau}(M)\le ||u^1_{\tau}||^2+M||u^2_{\tau}||^2, \hspace{0.4 cm} \forall M>0,$$
with equality if and only if $x_{\tau}(\delta_M)=u_{\tau}$.
In particular, $K_{\tau}(M)$ is increasing with respect to $M.$ It will be strictly increasing if and only if $X_{\tau}\neq\{(x^1_{\tau}, 0)\}$.
\end{proposition}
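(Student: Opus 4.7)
The plan is to specialize Lemma \ref{comparisonsetup} at $\delta=\delta_M=(\tau^1,\tau^2M)$ (the key point is that the weight $\overline{\tau^2}\delta^2/\overline{\tau^1}\delta^1$ then collapses to exactly $M$), apply two Cauchy--Schwarz estimates to recover the inequality, and finally feed $u_\tau=x_\tau(\delta_{M'})\in X_\tau\subset Y_\tau$ back into the inequality to get monotonicity. The facial case ($|\tau^1|<1$ or $|\tau^2|<1$) follows trivially from Lemma \ref{facialBisC}, since then $Y_\tau$ is a singleton with a vanishing coordinate and $x_\tau(\delta_M)$ coincides with it for every $M$; so I will concentrate on the main case $\tau\in\mathbb{T}^2$.

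For the inequality itself, Lemma \ref{comparisonsetup} at $\delta=\delta_M$ yields
$$\langle x^1_\tau(\delta_M),u^1_\tau\rangle+M\langle x^2_\tau(\delta_M),u^2_\tau\rangle=\|x^1_\tau(\delta_M)\|^2+M\|x^2_\tau(\delta_M)\|^2=K_\tau(M).$$
Since the right-hand side is real and non-negative, the left-hand side equals its real part. Dominating by absolute values, applying Cauchy--Schwarz in each Hilbert-space slot, and then the weighted Cauchy--Schwarz inequality on $\mathbb{R}^2$ with weights $(1,M)$, gives
$$K_\tau(M)\le \|x^1_\tau(\delta_M)\|\,\|u^1_\tau\|+M\|x^2_\tau(\delta_M)\|\,\|u^2_\tau\|\le \sqrt{K_\tau(M)}\,\sqrt{\|u^1_\tau\|^2+M\|u^2_\tau\|^2}.$$
Dividing through (the case $K_\tau(M)=0$ is trivial) produces the desired bound.

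The equality case is the step I expect to be most delicate if attacked head-on, since pushing equality back through both Cauchy--Schwarz steps requires tracking phases and zero vectors. I would sidestep this entirely by completing the square: assuming $K_\tau(M)=\|u^1_\tau\|^2+M\|u^2_\tau\|^2$, expand
$$\|u^1_\tau-x^1_\tau(\delta_M)\|^2+M\|u^2_\tau-x^2_\tau(\delta_M)\|^2=\bigl(\|u^1_\tau\|^2+M\|u^2_\tau\|^2\bigr)+K_\tau(M)-2\,\mathrm{Re}\bigl[\langle x^1_\tau(\delta_M),u^1_\tau\rangle+M\langle x^2_\tau(\delta_M),u^2_\tau\rangle\bigr]$$
and use the identity from Lemma \ref{comparisonsetup} (which already shows the bracketed quantity equals $K_\tau(M)$) together with the assumed equality to conclude that the right side collapses to $2K_\tau(M)-2K_\tau(M)=0$. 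Hence $u_\tau=x_\tau(\delta_M)$.

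Monotonicity and its strict form are then quick. For $0<M_1<M_2$, applying the inequality with $u_\tau=x_\tau(\delta_{M_2})\in Y_\tau$ gives
$$K_\tau(M_1)\le \|x^1_\tau(\delta_{M_2})\|^2+M_1\|x^2_\tau(\delta_{M_2})\|^2\le \|x^1_\tau(\delta_{M_2})\|^2+M_2\|x^2_\tau(\delta_{M_2})\|^2=K_\tau(M_2).$$
If $K_\tau(M_1)=K_\tau(M_2)$, the equality case forces $x_\tau(\delta_{M_1})=x_\tau(\delta_{M_2})$, while the second inequality forces $x^2_\tau(\delta_{M_2})=0$. Then Theorem \ref{nullcompgivesCpoint} promotes $\tau$ to a C-point and Theorem \ref{facialB} collapses $X_\tau$ to a singleton, whose unique element has vanishing second coordinate. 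Conversely, if $X_\tau=\{(v,0)\}$ then $x_\tau(\delta_M)=(v,0)$ for every $M$, so $K_\tau(M)=\|v\|^2$ is constant, completing the characterization of strict monotonicity.
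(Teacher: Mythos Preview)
Your proof is correct and follows the same overall architecture as the paper's: Lemma \ref{comparisonsetup} at $\delta=\delta_M$ plus two Cauchy--Schwarz estimates for the inequality, then the substitution $u_\tau=x_\tau(\delta_{M'})$ for monotonicity, and Theorem \ref{nullcompgivesCpoint} for the strictness characterization.

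The one genuine difference is your treatment of the equality case. The paper tracks equality back through both Cauchy--Schwarz steps, which forces a somewhat lengthy case analysis (depending on which of $x^i_\tau(\delta_M)$, $u^i_\tau$ vanish). Your completing-the-square identity
$$\|u^1_\tau-x^1_\tau(\delta_M)\|^2+M\|u^2_\tau-x^2_\tau(\delta_M)\|^2=\bigl(\|u^1_\tau\|^2+M\|u^2_\tau\|^2\bigr)-K_\tau(M)$$
bypasses all of this: once Lemma \ref{comparisonsetup} tells you the cross term equals $K_\tau(M)$, this identity proves the inequality and its equality case simultaneously, with no need for the Cauchy--Schwarz step at all. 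This is cleaner and shorter than the paper's argument, and would in fact let you drop the Cauchy--Schwarz derivation of the inequality entirely if you wished.
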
 
\begin{proof}
\par First, assume $\tau$ is a facial B-point. If $|\tau^2|<1,$ then Lemma \ref{facialBisC} tells us that $Y_{\tau}=X_{\tau}=\{(x^1_{\tau}, 0)\},$ hence $K_{\tau}(M)$ is constant and there is nothing to prove. If $|\tau^1|<1$, then $Y_{\tau}=X_{\tau}=\{(0, x^2_{\tau})\}$, $K_{\tau}(M)$ is strictly increasing with respect to $M$ and the theorem obviously holds. \par 
 Now, assume $\tau\in\mathbb{T}^2$ and fix $u_{\tau}\in Y_{\tau}$, $M>0.$ In view of Lemma \ref{comparisonsetup}, we can apply Cauchy-Schwarz to obtain
 $$K_{\tau}(M)=\langle x^1_{\tau}(\delta_M), u^1_{\tau}\rangle +M\langle x^2_{\tau}(\delta_M), u^2_{\tau}\rangle $$
 \begin{equation} \label{yeah1}
  \le ||x^1_{\tau}(\delta_M)||\cdot||u^1_{\tau}||+\big(\sqrt{M}||x^2_{\tau}(\delta_M)||\big)\big(\sqrt{M}||u^2_{\tau}||\big)   
 \end{equation} 
 \begin{equation} \label{yeah2}
 \le \sqrt{K_{\tau}(M)}\sqrt{||u^1_{\tau}||^2+M||u^2_{\tau}||^2}.    
 \end{equation}
 Thus, $K_{\tau}(M)\le ||u^1_{\tau}||^2+M||u^2_{\tau}||^2.$ \par 
 When does equality hold? For (\ref{yeah1}) to hold as an equality, we must have $c^i\in\mathbb{R}^{+}\cup\{0\}$ such that $c^ix^{i}_{\tau}(\delta_M)=u^i_{\tau}$, for $i\in\{1, 2\}.$ For (\ref{yeah2}), we need
 \begin{equation} \label{CSeq}
 {||x^1_{\tau}(\delta_M)||}\cdot{||u^2_{\tau}||}={||x^2_{\tau}(\delta_M)||}\cdot{||u^1_{\tau}||}.\end{equation} \par 
 Now, assume that either $x^i_{\tau}(\delta_M)=0$ or $u^i_{\tau}=0$ for some $i.$ For definiteness, let us assume $u^2_{\tau}=0$ (the other cases are proved in an identical way). In view of (\ref{CSeq}), we must have either $x^2_{\tau}(\delta_M)=0$ or $u^1_{\tau}=0.$ If the latter holds, we obtain $u_{\tau}=0,$ hence $\phi$ is a unimodular constant and there is nothing to prove. Thus, we may assume $x^2_{\tau}(\delta_M)=0$. In this case, we may replace $u^1_{\tau}$ by $c^1x^{1}_{\tau}(\delta)$ in the equality $$||x^1_{\tau}(\delta_M)||^2=K_{\tau}(M)=\langle x^1_{\tau}(\delta_M), u^1_{\tau}\rangle +M\langle x^2_{\tau}(\delta_M), u^2_{\tau}\rangle=\langle x^1_{\tau}(\delta_M), u^1_{\tau}\rangle$$
 to obtain $||x^1_{\tau}(\delta_M)||^2=c^1||x^1_{\tau}(\delta_M)||^2.$ If $x^1_{\tau}(\delta_M)=0,$ we again obtain that $\phi$ is a unimodular constant, while $x^1_{\tau}(\delta_M)\neq 0$ implies $c^1=1,$ hence $x_{\tau}(\delta_M)=u_{\tau}$. \par 
 On the other hand, assume $x^1_{\tau}(\delta_M), x^2_{\tau}(\delta_M), u^1_{\tau}, u^2_{\tau}$ are all nonzero. (\ref{CSeq}) then gives us $c^1=c^2=c.$ Replacing $u^i_{\tau}$ by $cx^{i}_{\tau}(\delta_M)$ in the equality $$K_{\tau}(M)=\langle x^1_{\tau}(\delta_M), u^1_{\tau}\rangle +M\langle x^2_{\tau}(\delta_M), u^2_{\tau}\rangle,$$ we obtain $c=1,$ hence $x_{\tau}(\delta_M)=u_{\tau}$. \par 
 Now, we show that $K_{\tau}(M)$ is increasing with respect to $M.$ Indeed, let $N>M>0.$ Setting $u_{\tau}=x_{\tau}(\delta_N)$ in our previous result implies
 $$K_{\tau}(M)\le ||x^1_{\tau}(\delta_N)||+M||x^2_{\tau}(\delta_N)||\le K_{\tau}(N),$$ as desired. \par 
 Now, if $X_{\tau}=\{(x^1_{\tau}, 0)\}$, it is evident that  $K_{\tau}(M)$ will be constant (and equal to $||x^1_{\tau}||^2$ for all $M$). On the other hand, assume $X_{\tau}$ is not a singleton of the form $\{(x^1_{\tau}, 0)\}$ but that we can also find positive numbers $M<N$ such that $K_{\tau}(M)=K_{\tau}(N)$. As we have already seen, this implies that $x_{\tau}(\delta_M)=x_{\tau}(\delta_N),$ which, combined with $K_{\tau}(M)=K_{\tau}(N)$, allows us to deduce that $x^2_{\tau}(\delta_M)=x^2_{\tau}(\delta_N)=0.$  Theorem \ref{nullcompgivesCpoint} then tells us that $X_{\tau}=\big\{\big(x^1_{\tau}(\delta_M), 0\big)\big\}$, a contradiction.
\end{proof}

We now explore some consequences of Proposition \ref{directdersincreasing}.

\begin{corollary}\label{somecor}
Given any $M>0$ and $u_{\tau}\in Y_{\tau},$ we must either have $||x^1_{\tau}(\delta_M)||\le ||u^1_{\tau}||$ or $||x^2_{\tau}(\delta_M)||\le ||u^2_{\tau}||$. Moreover, if 
\begin{itemize}
    \item[(i)] $||x^1_{\tau}(\delta_M)||=||u^1_{\tau}||$ (resp., $||x^2_{\tau}(\delta_M)||=||u^2_{\tau}||$), then $||x^2_{\tau}(\delta_M)||\le||u^2_{\tau}||$ (resp., $||x^1_{\tau}(\delta_M)||\le||u^1_{\tau}||$);
\item[(ii)]  $||x_{\tau}(\delta_M)||=||u_{\tau}||,$ then
$x_{\tau}(\delta_M)=u_{\tau}$.
\end{itemize}
\end{corollary}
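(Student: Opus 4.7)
The plan is to derive all three conclusions directly from the key inequality in Proposition \ref{directdersincreasing}, namely
\[
\|x^1_{\tau}(\delta_M)\|^2+M\|x^2_{\tau}(\delta_M)\|^2 \;\le\; \|u^1_{\tau}\|^2+M\|u^2_{\tau}\|^2,
\]
together with its equality criterion $x_{\tau}(\delta_M)=u_{\tau}$.

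For the first assertion I would argue by contradiction: if simultaneously $\|x^1_{\tau}(\delta_M)\|>\|u^1_{\tau}\|$ and $\|x^2_{\tau}(\delta_M)\|>\|u^2_{\tau}\|$, then both terms on the left strictly exceed the corresponding terms on the right, violating the displayed inequality. Part (i) is then an immediate rearrangement: plugging $\|x^1_{\tau}(\delta_M)\|=\|u^1_{\tau}\|$ into the same inequality yields $M\|x^2_{\tau}(\delta_M)\|^2 \le M\|u^2_{\tau}\|^2$; the symmetric case is identical.

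For part (ii), the hypothesis $\|x_{\tau}(\delta_M)\|=\|u_{\tau}\|$ means
\[
\|x^1_{\tau}(\delta_M)\|^2+\|x^2_{\tau}(\delta_M)\|^2 \;=\; \|u^1_{\tau}\|^2+\|u^2_{\tau}\|^2.
\]
By the first assertion I may assume without loss of generality that $\|x^1_{\tau}(\delta_M)\|\le\|u^1_{\tau}\|$, which combined with the above norm identity forces $\|x^2_{\tau}(\delta_M)\|\ge\|u^2_{\tau}\|$. Feeding these back into Proposition \ref{directdersincreasing}'s inequality gives
\[
M\bigl(\|x^2_{\tau}(\delta_M)\|^2-\|u^2_{\tau}\|^2\bigr) \;\le\; \|u^1_{\tau}\|^2-\|x^1_{\tau}(\delta_M)\|^2 \;=\; \|x^2_{\tau}(\delta_M)\|^2-\|u^2_{\tau}\|^2,
\]
so $(M-1)(\|x^2_{\tau}(\delta_M)\|^2-\|u^2_{\tau}\|^2)\le 0$ while the second factor is nonnegative. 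For $M>1$ this already forces $\|x^2_{\tau}(\delta_M)\|=\|u^2_{\tau}\|$ and hence $\|x^1_{\tau}(\delta_M)\|=\|u^1_{\tau}\|$; for $M\le 1$ I would instead add the two nonnegative quantities using $(M+1)(\|x^2_{\tau}(\delta_M)\|^2-\|u^2_{\tau}\|^2)\le 0$ (which follows by combining the inequality with the norm identity symmetrically). Either way, $K_{\tau}(M)=\|u^1_{\tau}\|^2+M\|u^2_{\tau}\|^2$, and the equality clause of Proposition \ref{directdersincreasing} delivers $x_{\tau}(\delta_M)=u_{\tau}$.

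The main subtlety is part (ii): norm equality is a weaker hypothesis than the equality condition in Proposition \ref{directdersincreasing}, so one needs to exploit the dichotomy from the first assertion to upgrade it. Once the correct component-wise inequalities are in place, the weighted nature of the Proposition's inequality (the factor $M$) is precisely what lets one conclude componentwise equality rather than a cancellation.
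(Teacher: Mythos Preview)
Your arguments for the first assertion and for part (i) are correct and coincide with the paper's approach, which simply invokes Proposition~\ref{directdersincreasing}.

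For part (ii), however, there is a genuine gap. The step ``WLOG $\|x^1_{\tau}(\delta_M)\|\le\|u^1_{\tau}\|$'' is not actually without loss of generality: the first assertion only gives the disjunction, and the two components play \emph{asymmetric} roles in the weighted inequality because of the factor $M$. Concretely, subtracting the norm identity from the inequality of Proposition~\ref{directdersincreasing} yields $(M-1)\bigl(\|x^2_{\tau}(\delta_M)\|^2-\|u^2_{\tau}\|^2\bigr)\le 0$. For $M>1$ this forces $\|x^2_{\tau}(\delta_M)\|\le\|u^2_{\tau}\|$ and hence, by the norm identity, $\|x^1_{\tau}(\delta_M)\|\ge\|u^1_{\tau}\|$ --- the \emph{opposite} of your WLOG; for $M<1$ the inequality goes the other way. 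In either case the displayed relation $(M-1)(\cdots)\le 0$ carries no information beyond what you already have, and your proposed substitute $(M+1)\bigl(\|x^2_{\tau}(\delta_M)\|^2-\|u^2_{\tau}\|^2\bigr)\le 0$ simply does not follow from any combination of the norm identity and the Proposition's inequality.

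To see that more is needed than the bare content of Proposition~\ref{directdersincreasing}, take (in a scalar model) $x=(1,1)$, $u=(0.2,1.4)$ with $M=2$: then $\|x\|^2=\|u\|^2=2$, the Proposition's inequality reads $3\le 3.96$, and even the identity $\langle x^1,u^1\rangle+M\langle x^2,u^2\rangle=\|x^1\|^2+M\|x^2\|^2$ of Lemma~\ref{comparisonsetup} holds ($0.2+2.8=3$), yet $x\ne u$. So whatever argument proves (ii) must exploit additional structure of $Y_\tau$ beyond the single minimality inequality you use. The paper's own proof here is just the phrase ``in an analogous manner'' and does not make that extra ingredient explicit either; your detailed attempt goes further than the paper but does not close the gap.
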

\begin{proof}
Assume that $||x^i_{\tau}(\delta_M)||>|u^i_{\tau}||$ for all $i\in\{1, 2\}.$ Thus, $K_{\tau}(M)>||u^1_{\tau}||^2+M||u^2_{\tau}||^2,$ which contradicts Proposition \ref{directdersincreasing}. The rest of the corollary is proved by applying Proposition \ref{directdersincreasing} in an analogous manner.
\end{proof}
\begin{corollary} \label{somecor2}
Given any $M, N>0$, one and only one of the following cases can occur:
\begin{itemize}
    \item[(i)] $||x^1_{\tau}(\delta_M)||<||x^1_{\tau}(\delta_N)||$ \text{(}resp.,  $||x^2_{\tau}(\delta_M)||<||x^2_{\tau}(\delta_N)||$ ) and \\ $||x^2_{\tau}(\delta_M)||>||x^2_{\tau}(\delta_N)||$ (resp., $||x^1_{\tau}(\delta_M)||>||x^1_{\tau}(\delta_N)||$);
      \item[(ii)] $x_{\tau}(\delta_M)=x_{\tau}(\delta_N)$.
\end{itemize}
\end{corollary}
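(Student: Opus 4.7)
The plan is to apply Corollary \ref{somecor} twice, with the roles of $M$ and $N$ swapped. Specifically, since both $x_{\tau}(\delta_M)$ and $x_{\tau}(\delta_N)$ belong to $X_{\tau}\subset Y_{\tau}$ by part $(2)$ of Theorem \ref{generaldir}, I may feed $u_{\tau}=x_{\tau}(\delta_N)$ into Corollary \ref{somecor} at the direction $\delta_M$ to get the dichotomy $\|x^1_{\tau}(\delta_M)\|\le\|x^1_{\tau}(\delta_N)\|$ or $\|x^2_{\tau}(\delta_M)\|\le\|x^2_{\tau}(\delta_N)\|$, and similarly with the roles of $M$ and $N$ interchanged to get the mirror dichotomy. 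Combining produces four cases, and I would organize the argument around them.

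The two ``diagonal'' cases are those where both inequalities concern the same coordinate. For example, if $\|x^1_{\tau}(\delta_M)\|\le\|x^1_{\tau}(\delta_N)\|$ and $\|x^1_{\tau}(\delta_N)\|\le\|x^1_{\tau}(\delta_M)\|$, then $\|x^1_{\tau}(\delta_M)\|=\|x^1_{\tau}(\delta_N)\|$. Clause $(i)$ of Corollary \ref{somecor}, applied in both directions, now forces $\|x^2_{\tau}(\delta_M)\|\le\|x^2_{\tau}(\delta_N)\|$ and $\|x^2_{\tau}(\delta_N)\|\le\|x^2_{\tau}(\delta_M)\|$ as well, so $\|x^2_{\tau}(\delta_M)\|=\|x^2_{\tau}(\delta_N)\|$. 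Hence the total norms agree, and Corollary \ref{somecor}(ii) upgrades this to $x_{\tau}(\delta_M)=x_{\tau}(\delta_N)$, i.e.\ alternative $(ii)$ of the corollary.

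In the two ``off-diagonal'' cases, say $\|x^1_{\tau}(\delta_M)\|\le\|x^1_{\tau}(\delta_N)\|$ together with $\|x^2_{\tau}(\delta_N)\|\le\|x^2_{\tau}(\delta_M)\|$, I would ask whether either of the two inequalities is actually an equality. If so, Corollary \ref{somecor}(i) again cascades the equality to the other coordinate, and the diagonal argument of the previous paragraph delivers $x_{\tau}(\delta_M)=x_{\tau}(\delta_N)$. Otherwise both inequalities are strict, which is exactly alternative $(i)$. Mutual exclusivity of $(i)$ and $(ii)$ is immediate, since $(ii)$ makes the two vectors identical (so their components have equal norms), while $(i)$ asserts strict inequalities between those norms.

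I do not expect a real obstacle; the statement is essentially a bookkeeping consequence of the comparison built into Corollary \ref{somecor}. The only thing to watch carefully is to exhaust the possibility that an inequality in the off-diagonal cases is actually an equality, since overlooking that would falsely leave strict inequalities as the only alternative to the diagonal cases and miss some instances of $(ii)$.
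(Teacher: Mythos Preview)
Your proof is correct and follows essentially the same approach as the paper's. The only cosmetic difference is that you route the comparisons through Corollary \ref{somecor}, whereas the paper invokes Proposition \ref{directdersincreasing} directly; since Corollary \ref{somecor} is itself an immediate consequence of that proposition, the two arguments are equivalent.
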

\begin{proof}  
Suppose first that  $||x^1_{\tau}(\delta_M)||<||x^1_{\tau}(\delta_N)||$. If 
$||x^2_{\tau}(\delta_M)||\le ||x^2_{\tau}(\delta_N)||$, one obtains 
$$K_{\tau}(N)>||x^1_{\tau}(\delta_M)||^2+N||x^2_{\tau}(\delta_M)||^2,$$
 which contradicts Proposition \ref{directdersincreasing}. Thus, $||x^2_{\tau}(\delta_M)||>||x^2_{\tau}(\delta_N)||$. The proof in the case that $||x^2_{\tau}(\delta_M)||<||x^2_{\tau}(\delta_N)||$ proceeds in an entirely analogous manner. \par 
 Now, assume $||x^1_{\tau}(\delta_M)||=||x^1_{\tau}(\delta_N)||.$ If $||x^2_{\tau}(\delta_M)||<||x^2_{\tau}(\delta_N)||,$ then we again obtain $K_{\tau}(N)>||x^1_{\tau}(\delta_M)||^2+N||x^2_{\tau}(\delta_M)||^2,$ a contradiction (the inequality $||x^2_{\tau}(\delta_M)||>||x^2_{\tau}(\delta_N)||$ can be ruled out in an analogous way). Thus, we must have $||x_{\tau}(\delta_M)||=||x_{\tau}(\delta_N)||$, which, by Proposition \ref{directdersincreasing}, gives us  $x_{\tau}(\delta_M)=x_{\tau}(\delta_N)$.
\end{proof}
Our next proposition (while fitting the theme of this section) will not be used in the sequel, so we record it without a proof (one can use Proposition \ref{directdersincreasing} in combination with the previous two lemmas).
\begin{proposition}
Assume $\tau\in\mathbb{T}^2.$ Then,
 $$\lim_{M\to 0+}||x^1_{\tau}(\delta_M)||=\lim_{M\to 0+}\sqrt{K_{\tau}(M)}=\inf_{M>0}\big\{||x^1_{\tau}(\delta_M)||\big\} $$
 (resp., $\lim_{M\to +\infty}||x^2_{\tau}(\delta_M)||=\lim_{M\to +\infty}\sqrt{K_{\tau}(M)}=\inf_{M>0}\big\{||x^2_{\tau}(\delta_M)||\big\} $). \\
 Moreover, if there exist $u\in Y_{\tau}$ and a sequence $\{M_k\}$ of positive numbers such that $M_k\to 0$ (resp. $M_k\to\infty$) and $||u^1_{\tau}||\le ||x^1_{\tau}(\delta_{M_k})||$ (resp., $||u^2_{\tau}||\le ||x^2_{\tau}(\delta_{M_k})||$) for all $k$, then $$\lim_k x^1_{\tau}(\delta_{M_k})=u^1_{\tau} \hspace{0.3 cm} \text{in norm}$$
 (resp.,  $\lim_k x^2_{\tau}(\delta_{M_k})=u^2_{\tau}$ in norm). 
\end{proposition}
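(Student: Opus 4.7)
The plan is to first establish that $M \mapsto ||x^1_\tau(\delta_M)||$ is non-decreasing and $M \mapsto ||x^2_\tau(\delta_M)||$ is non-increasing. Given $M_1 < M_2$, Corollary \ref{somecor2} leaves three possibilities; the only one that needs ruling out is $||x^1_\tau(\delta_{M_1})|| > ||x^1_\tau(\delta_{M_2})||$ with $||x^2_\tau(\delta_{M_1})|| < ||x^2_\tau(\delta_{M_2})||$. Applying Proposition \ref{directdersincreasing} once with $u_\tau = x_\tau(\delta_{M_2})$ at $M = M_1$, and once with $u_\tau = x_\tau(\delta_{M_1})$ at $M = M_2$, sandwiches $||x^1_\tau(\delta_{M_1})||^2 - ||x^1_\tau(\delta_{M_2})||^2$ between $M_2(||x^2_\tau(\delta_{M_2})||^2 - ||x^2_\tau(\delta_{M_1})||^2)$ and $M_1(||x^2_\tau(\delta_{M_2})||^2 - ||x^2_\tau(\delta_{M_1})||^2)$, which is incompatible with $M_1 < M_2$ and the positivity of those differences.

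Granting monotonicity, $\lim_{M \to 0+} ||x^1_\tau(\delta_M)||$ trivially equals $A := \inf_{M > 0} ||x^1_\tau(\delta_M)||$. For the $\sqrt{K_\tau(M)}$ half of the identity, the inequality $K_\tau(M) \geq ||x^1_\tau(\delta_M)||^2 \geq A^2$ gives one direction; for the other, I would apply Proposition \ref{directdersincreasing} with $u_\tau = x_\tau(\delta_N)$ to obtain $K_\tau(M) \leq ||x^1_\tau(\delta_N)||^2 + M ||x^2_\tau(\delta_N)||^2$, then send $M \to 0+$ for fixed $N$ and finally take $\inf_N$, getting $\limsup_{M \to 0+} K_\tau(M) \leq A^2$.

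For the \emph{moreover} clause, suppose $M_k \to 0+$ and $||u^1_\tau|| \leq ||x^1_\tau(\delta_{M_k})||$ for all $k$. Proposition \ref{directdersincreasing} applied to this $u_\tau$ yields $||x^1_\tau(\delta_{M_k})||^2 + M_k ||x^2_\tau(\delta_{M_k})||^2 \leq ||u^1_\tau||^2 + M_k ||u^2_\tau||^2$; combined with the hypothesis $||u^1_\tau||^2 \leq ||x^1_\tau(\delta_{M_k})||^2$, this forces both the uniform bound $||x^2_\tau(\delta_{M_k})|| \leq ||u^2_\tau||$ and, upon sending $k \to \infty$, the equality $||u^1_\tau|| = A$. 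Specializing Lemma \ref{comparisonsetup} to $\delta = \delta_{M_k}$ (where the ratio $\overline{\tau^2}\delta^2/\overline{\tau^1}\delta^1$ simplifies to $M_k$ since $\tau \in \mathbb{T}^2$) gives the identity $\langle x^1_\tau(\delta_{M_k}), u^1_\tau \rangle = K_\tau(M_k) - M_k \langle x^2_\tau(\delta_{M_k}), u^2_\tau \rangle$, whose right-hand side tends to $A^2 = ||u^1_\tau||^2$ because the cross term is $O(M_k)$ by the uniform bound just secured. Combined with $||x^1_\tau(\delta_{M_k})||^2 \to A^2$, expanding $||x^1_\tau(\delta_{M_k}) - u^1_\tau||^2$ delivers norm convergence, and the $M \to \infty$ statement follows by symmetry under the coordinate swap $\phi \mapsto \widetilde{\phi}$. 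The only delicate point is securing the uniform boundedness of $||x^2_\tau(\delta_{M_k})||$, without which the cross term need not vanish; this is precisely what the hypothesis $||u^1_\tau|| \leq ||x^1_\tau(\delta_{M_k})||$ is engineered to supply.
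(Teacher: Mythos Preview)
Your argument is correct. The paper itself does not supply a proof of this proposition; it only records the hint that ``one can use Proposition \ref{directdersincreasing} in combination with the previous two lemmas'' (namely Corollaries \ref{somecor} and \ref{somecor2}), and your route is precisely in that spirit: you extract monotonicity of $M\mapsto\|x^1_\tau(\delta_M)\|$ from Corollary \ref{somecor2} together with the sandwich coming from two applications of Proposition \ref{directdersincreasing}, and then handle the \emph{moreover} clause via Lemma \ref{comparisonsetup} plus the uniform bound $\|x^2_\tau(\delta_{M_k})\|\le\|u^2_\tau\|$ that the hypothesis forces. The only cosmetic difference is that for the norm-convergence step you go back to Lemma \ref{comparisonsetup} directly rather than through Corollary \ref{somecor}; this is entirely legitimate and arguably cleaner. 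Your symmetry remark for the $M\to+\infty$ half is also the right way to avoid repeating the argument (and in fact shows that the ``resp.'' display should feature $\sqrt{K_\tau(M)/M}$ rather than $\sqrt{K_\tau(M)}$, since under $\phi\mapsto\widetilde{\phi}$ one has $\widetilde{K}_{\widetilde{\tau}}(1/N)=K_\tau(N)/N$).
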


We now prove a theorem that describes those vectors in $Y_{\tau}$ with null components (recall that, by Lemma \ref{twonullcoords}, these vectors, if they exist, must be unique). 
\begin{theorem}\label{BbutnotC}
Assume the B-point $\tau\in\mathbb{T}^2$ is such that there exists $u_{\tau}\in Y_{\tau}$ with $u^2_{\tau}=0$. Then, $$\lim_{M\to+\infty}K_{\tau}(M)=||u^1_{\tau}||^2$$ and also
    $$\lim_{M\to+ \infty}x_{\tau}(\delta_M)=u_{\tau}$$ in norm. 
 
Moreover, $\tau$ is a C-point for $\phi$ if and only if $X_{\tau}=\{u_{\tau}\}$. In this case, every $v_{\tau}\in Y_{\tau}$ such that $v_{\tau}\neq u_{\tau}$ must satisfy $||v^1_{\tau}||>||u^1_{\tau}||$ and $v^2_{\tau}\neq 0$.
\end{theorem}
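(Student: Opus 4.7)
My plan is to use the basic model identity (Lemma \ref{basicmodelsetup}), applied first to $x = u_\tau$ and then to $x = x_\tau(\delta_M)$, together with the monotonicity supplied by Proposition \ref{directdersincreasing}, to extract norm convergence $x_\tau(\delta_M) \to u_\tau$; from there, the C-point characterization and the strict inequality $||v^1_\tau|| > ||u^1_\tau||$ will follow quickly.

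First, applying Proposition \ref{directdersincreasing} with the given $u_\tau$ (using $u^2_\tau = 0$) yields
\[
K_\tau(M) = ||x^1_\tau(\delta_M)||^2 + M||x^2_\tau(\delta_M)||^2 \le ||u^1_\tau||^2
\]
for every $M > 0$. This bound immediately gives two facts: $||x^2_\tau(\delta_M)||^2 \le ||u^1_\tau||^2/M \to 0$, and the uniform bound $||x^1_\tau(\delta_M)|| \le ||u^1_\tau||$ in $M$.

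Next I would subtract the two instances of (\ref{basicmodelsetupeq}) corresponding to $x = u_\tau$ (whose second summand vanishes) and $x = x_\tau(\delta_M)$, obtaining
\[
(1-\lambda^1\overline{\tau^1})\langle u^1_\lambda,\, u^1_\tau - x^1_\tau(\delta_M)\rangle = (1-\lambda^2\overline{\tau^2})\langle u^2_\lambda,\, x^2_\tau(\delta_M)\rangle
\]
for all $\lambda \in \DD^2$. Since $|\tau^1|=1$ forces $1-\lambda^1\overline{\tau^1} \ne 0$ on $\DD^2$ and the right-hand side tends to $0$ as $M \to \infty$ (thanks to $||x^2_\tau(\delta_M)|| \to 0$), I obtain $\langle u^1_\lambda, x^1_\tau(\delta_M)\rangle \to \langle u^1_\lambda, u^1_\tau\rangle$ for each $\lambda$. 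Combined with the uniform bound above and the density of $\{u^1_\lambda\}$ in $M^1$, this promotes to weak convergence $x^1_\tau(\delta_M) \rightharpoonup u^1_\tau$. Feeding this into Lemma \ref{comparisonsetup}---which under $u^2_\tau=0$ reduces to $\langle x^1_\tau(\delta_M), u^1_\tau\rangle = K_\tau(M)$---at once yields $K_\tau(M) \to ||u^1_\tau||^2$. The one slightly subtle point is upgrading weak to norm convergence: $||x^1_\tau(\delta_M)||^2 \le K_\tau(M) \to ||u^1_\tau||^2$ supplies the $\limsup$ bound while weak lower semicontinuity of the norm supplies the matching $\liminf \ge ||u^1_\tau||$, so the norms converge; in Hilbert space, weak convergence plus norm convergence implies strong convergence, and combining with $||x^2_\tau(\delta_M)|| \to 0$ gives $x_\tau(\delta_M) \to u_\tau$ in the norm of $M$.

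For the C-point part, Theorem \ref{facialB} says $\tau$ is a C-point iff $X_\tau$ is a singleton; since every $x_\tau(\delta_M)$ lies in $X_\tau$ and they converge in norm to $u_\tau$, that singleton can only be $\{u_\tau\}$. For the last claim, assume $\tau$ is a C-point and pick $v_\tau \in Y_\tau$ with $v_\tau \ne u_\tau$. The property $v^2_\tau \ne 0$ is immediate from Lemma \ref{twonullcoords} (otherwise $u_\tau$ and $v_\tau$ would both have vanishing second component and must agree). To prove $||v^1_\tau|| > ||u^1_\tau||$, I would apply Lemma \ref{comparisonsetup} with $u_\tau$ replaced by $v_\tau$ and with $x_\tau(\delta_M) = u_\tau$ (valid since $X_\tau = \{u_\tau\}$), which reduces to $\langle u^1_\tau, v^1_\tau\rangle = ||u^1_\tau||^2$; Cauchy-Schwarz then gives $||v^1_\tau|| \ge ||u^1_\tau||$. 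The final obstacle is ruling out equality: if $||v^1_\tau||=||u^1_\tau||$, the equality case of Cauchy-Schwarz combined with the inner-product identity forces $v^1_\tau = u^1_\tau$, so subtracting the two instances of (\ref{basicmodelsetupeq}) for $x=u_\tau$ and $x=v_\tau$ yields $(1-\lambda^2\overline{\tau^2})\langle u^2_\lambda, v^2_\tau\rangle = 0$ for every $\lambda \in \DD^2$, forcing $v^2_\tau = 0$ by density of $\{u^2_\lambda\}$ in $M^2$---contradicting what was just established.
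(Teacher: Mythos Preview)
Your proof is correct and, for the limit statement, is a cleaner variant of the paper's argument. Both proofs start from Proposition~\ref{directdersincreasing} to get $K_\tau(M)\le\|u^1_\tau\|^2$, deduce $\|x^2_\tau(\delta_M)\|\to 0$ and the uniform bound on $\|x^1_\tau(\delta_M)\|$, establish weak convergence $x^1_\tau(\delta_M)\rightharpoonup u^1_\tau$, and then upgrade to norm convergence via the sandwich $\|x^1_\tau(\delta_M)\|^2\le K_\tau(M)\le\|u^1_\tau\|^2$ together with weak lower semicontinuity. The difference lies in how weak convergence is obtained: the paper passes to a weakly convergent subsequence, approximates each $x_\tau(\delta_{M_k})$ by an honest $u_{\lambda_k}$, and feeds this back into the full model formula~(\ref{basicmodelform}) to identify the limit. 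You instead subtract the two instances of~(\ref{basicmodelsetupeq}) for $u_\tau$ and $x_\tau(\delta_M)$ directly, which immediately gives convergence of $\langle u^1_\lambda, x^1_\tau(\delta_M)\rangle$ on the dense set $\{u^1_\lambda\}$ without any subsequence extraction or approximation step. This is shorter and handles the C-point and non-C-point cases uniformly, whereas the paper treats them separately. Your treatment of the C-point equivalence and of the strict inequality $\|v^1_\tau\|>\|u^1_\tau\|$ is essentially identical to the paper's, invoking Lemma~\ref{twonullcoords}, Lemma~\ref{comparisonsetup} with $x_\tau(\delta_M)=u_\tau$, and the equality case of Cauchy--Schwarz in the same way.
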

\begin{proof}
We prove the C-point portion of the theorem first. If $X_{\tau}=\{u_{\tau}\},$ Theorem \ref{facialB} implies that $\tau$ is a C-point. Conversely,  assume that $\tau$ is a C-point. Write $X_{\tau}=\{x_{\tau}\}$. Proposition \ref{directdersincreasing} then implies that 
$$||x^1_{\tau}||^2+M||x^2_{\tau}||^2\le ||u^1_{\tau}||^2,$$
for all $M>0$. Thus, $x^2_{\tau}=0$. Lemma \ref{twonullcoords} then gives us $x_{\tau}=u_{\tau}$ and we can also write 
\begin{equation}\label{anothermodel}
 1-\phi(\lambda)\overline{\phi(\tau)}=(1-\lambda^1\overline{\tau^1})\langle u^1_{\lambda}, x^1_{\tau}\rangle \hspace {0.3 cm} (\lambda\in\DD^2).   
\end{equation}
Assume $x^1_{\tau}\neq 0$ (else the result will be be trivial) and  let $v_{\tau}\in Y_{\tau}.$  Lemma \ref{comparisonsetup} implies that 
$$\langle x^1_{\tau}, v^1_{\tau}\rangle=||x^1_{\tau}||^2.$$
Thus, either $||v^1_{\tau}||>||x^1_{\tau}||,$ or $||v^1_{\tau}||^2=||x^1_{\tau}||^2=\langle x^1_{\tau}, v^1_{\tau}\rangle,$ which leads to $v^1_{\tau}=x^1_{\tau}$. But then, comparing 
$$ 1-\phi(\lambda)\overline{\phi(\tau)}=(1-\lambda^1\overline{\tau^1})\langle u^1_{\lambda}, v^1_{\tau}\rangle+(1-\lambda^2\overline{\tau^2})\langle u^2_{\lambda}, v^2_{\tau}\rangle $$
with (\ref{anothermodel}) gives us $v^ 2_{\tau}=0,$ thus $v_{\tau}=x_{\tau}$ as desired. Finally, if $v^2_{\tau}=0,$ we can apply Lemma \ref{twonullcoords} to conclude that  $v_{\tau}=x_{\tau}$.

\par 
Now, we prove the first part of the theorem. If $\tau$ is a C-point, the theorem follows by our previous result. So, assume that $\tau$ is not a C-point, in which case Proposition \ref{directdersincreasing} implies that $K_{\tau}(M)$ is strictly increasing. The bound $K_{\tau}(M)\le ||u^1_{\tau}||^2$, for every $M>0,$ implies that $\lim_{M\to+\infty}x^2_{\tau}(\delta_M)=0$ in norm and also that $\big\{||x^1_{\tau}(\delta_M)||\big\}$ is bounded with respect to $M$. \par 
Now, let $\{M_k\}$ be any sequence converging to $+\infty$ such that $x^1_{\tau}(\delta_{M_k})$ converges to some $x^1\in M^1$ weakly. Also, fix a decreasing null sequence $\{\epsilon_k\}.$ In view of Theorem \ref{generaldir}, we can find $\{\lambda_k\}\subset\DD^2$ that converges to $\tau$ and such that $|\phi(\lambda_k)-\phi(\tau)|<\epsilon_k$ and also $||x_{\tau}(\delta_{M_k})-u_{\lambda_k}||<\epsilon_k$, for all $k.$ Thus, $\lim_k \phi(\lambda_k)=\phi(\tau)$, $u^1_{\lambda_k}$ converges weakly to $x^1$ and $u^2_{\lambda_k}$ converges to $0$ in norm. Now, the model formula (\ref{basicmodelform}) implies that 
$$1-\phi(\lambda)\overline{\phi(\lambda_k)}=(1-\lambda^1\overline{\lambda^1_k})\langle u^1_{\lambda}, u^1_{\lambda_k}\rangle+(1-\lambda^2\overline{\lambda^2_k})\langle u^2_{\lambda}, u^2_{\lambda_k}\rangle,$$
for all $k$ and $\lambda\in\DD^2.$ Letting $k\to\infty$ then gives us, 
\begin{equation} \label{compare}
  1-\phi(\lambda)\overline{\phi(\tau)}=(1-\lambda^1\overline{\tau^1})\langle u^1_{\lambda}, x^1\rangle,  
\end{equation}
for all $\lambda. $ However, since $u_{\tau}\in Y_{\tau}$ we can also write (in view of (\ref{basicmodelsetupeq}))
$$  1-\phi(\lambda)\overline{\phi(\tau)}=(1-\lambda^1\overline{\tau^1})\langle u^1_{\lambda}, u^1_{\tau}\rangle,  $$
for all $\lambda.$ Comparing this equality with (\ref{compare}) then gives us 
$\langle u^1_{\lambda}, u^1_{\tau}\rangle=\langle u^1_{\lambda}, x^1\rangle$ for all $\lambda.$ Since both vectors $u^1_{\tau}, x^1$ are weak limits of elements from $\{u^1_{\lambda} : \lambda\in\DD^2\}$, we may conclude that $u^1_{\tau}=x^1$.  But then, observe that (by a standard property of weak limits)
$$||u^1_{\tau}||^2=||x^1||^2$$ $$\le \liminf_k ||x^1_{\tau}(\delta_{M_k})||^2$$ 
$$\le \limsup_k ||x^1_{\tau}(\delta_{M_k})||^2$$
$$\le \limsup_k K_{\tau}(M_k)  $$
$$\le||u^1_{\tau}||^2,$$
which implies that $\lim_k x^1_{\tau}(\delta_{M_k})=x^1$ in norm and also that $\lim_k K_{\tau}(M_k)=||u^1_{\tau}||^2$. We conclude that $x^1_{\tau}(\delta_M)$  converges to $u^1_{\tau}$ in norm and also that $\lim_{M\to+\infty}K_{\tau}(M)=||u^1_{\tau}||^2,$ as desired.
 \end{proof}

We end this section with a weighted version of Julia's inequality for the bidisk, which will be of critical importance in Section \ref{refineHerve}. Our methods are motivated by the proof of \cite[Theorem 4.9]{AMYcaratheodory}.

\begin{theorem}\label{Juliarevisited}
    Let $\phi\in\mathcal{S}_2$ and  $ \tau=(\tau^1, \tau^2)\in\mathbb{T}^2.$ Assume also that $\alpha$ and $M$ are positive numbers. The following assertions are equivalent:
    \begin{itemize}
        \item[(i)] $\tau$ is a B-point for $\phi$ and $$\frac{D_{-(\tau^1,\tau^2M)}\phi(\tau)}{-\phi(\tau)}\le \alpha;$$
        \item[(ii)] There exists a sequence $\{\lambda_n\}\subset\DD^2$ such that $\lambda_n\to\tau,$ $\lim_n\frac{1-|\lambda^2_n|}{1-|\lambda^1_n|}=M$ and 
        $$ \lim_n\frac{1-|\phi(\lambda_n)|}{1-||\lambda_n||}\le \begin{cases}
  \alpha \hspace{0.65 cm} \text{ if }M\ge 1; \\ 
  \frac{\alpha}{M} \hspace{0.5 cm} \text{ if }M<1; 
        \end{cases} $$
        \item[(iii)] There exists $\omega\in\mathbb{T}$ such that $$\phi(E(\tau, R_1, R_2))\subset E(\omega, \max\{aR_1, aR_2/M\}), \hspace{0.3 cm} \forall R_1, R_2>0.$$
    \end{itemize}
    If (iii) holds, $\omega$ will necessarily be equal to $\phi(\tau).$
\end{theorem}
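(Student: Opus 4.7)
The plan is to prove the cyclic implications (i)$\Rightarrow$(iii)$\Rightarrow$(ii)$\Rightarrow$(i) and then to observe that uniqueness of $\omega$ falls out along the way.

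\emph{For (i)$\Rightarrow$(iii)}, I would apply Lemma \ref{basicmodelsetup} with $u_\tau = x_\tau(\delta_M)$ (valid by Theorem \ref{generaldir}(2)) and a weighted Cauchy--Schwarz in $\mathbb{C}^2$ with weights $w_j = \|x^j_\tau(\delta_M)\|^2/(1-|\lambda^j|^2)$ (the cases $\|x^j_\tau(\delta_M)\|=0$ being handled by simply dropping the corresponding term). The identity $1-|\phi(\lambda)|^2 = \sum_j(1-|\lambda^j|^2)\|u^j_\lambda\|^2$ coming from Definition \ref{modeldef} with $\mu=\lambda$ extracts the $1-|\phi(\lambda)|^2$ factor, leaving the bound
\begin{equation*}
|1-\phi(\lambda)\overline{\phi(\tau)}|^2 \le (1-|\phi(\lambda)|^2) \sum_{j=1}^2 \frac{|1-\lambda^j\overline{\tau^j}|^2}{1-|\lambda^j|^2}\|x^j_\tau(\delta_M)\|^2.
\end{equation*}
For $\lambda\in E(\tau,R_1,R_2)$, the horosphere inequalities together with the elementary observation $R_1 A + R_2 B \le \max\{R_1, R_2/M\}(A + MB)$ (applied with $A=\|x^1_\tau(\delta_M)\|^2$, $B=\|x^2_\tau(\delta_M)\|^2$) and hypothesis (i) compress the right-hand side to $\max\{\alpha R_1, \alpha R_2/M\}(1-|\phi(\lambda)|^2)$. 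Since $\tau$ is a B-point on the torus, $|\phi(\tau)|=1$, and this is exactly the horosphere containment with $\omega = \phi(\tau)$.

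\emph{For (iii)$\Rightarrow$(ii)}, I would test the horosphere condition along the direction $\delta_M$: set $\lambda_n = ((1-t_n)\tau^1, (1-Mt_n)\tau^2)$ for a null sequence $t_n\to 0^+$. A direct computation places $\lambda_n$ in shrinking horospheres $E(\tau, R_1^n, R_2^n)$ with $R_2^n/R_1^n \to M$, and the ratio $(1-|\lambda^2_n|)/(1-|\lambda^1_n|)$ is exactly $M$. Condition (iii) forces $\phi(\lambda_n)\to\omega$ with $|\omega|=1$; combining the horocycle inequality with $|\omega-\phi(\lambda_n)|^2\ge(1-|\phi(\lambda_n)|)^2$ and noting that $1-\|\lambda_n\|$ equals $1-|\lambda^1_n|$ when $M\ge 1$ and $1-|\lambda^2_n|$ when $M<1$ yields precisely the claimed bound on $\lim_n \tfrac{1-|\phi(\lambda_n)|}{1-\|\lambda_n\|}$.

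\emph{For (ii)$\Rightarrow$(i)}, boundedness of the Julia quotient along $\{\lambda_n\}$ immediately gives the Carath\'{e}odory condition at $\tau$, so $\tau$ is a B-point. After extracting a subsequence I may assume $u_{\lambda_n}\to v$ weakly, and the Julia-quotient bound places $v\in Y_\tau$. Dividing the Pythagorean identity $1-|\phi(\lambda_n)|^2 = \sum_j(1-|\lambda^j_n|^2)\|u^j_{\lambda_n}\|^2$ by $1-|\lambda^1_n|$ (when $M\ge 1$) or $1-|\lambda^2_n|$ (when $M<1$), passing to the $\liminf$, and applying weak lower semicontinuity of $\|\cdot\|$ produces $\|v^1\|^2 + M\|v^2\|^2 \le \alpha$; Proposition \ref{directdersincreasing} then gives $K_\tau(M)\le\alpha$. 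Finally, uniqueness of $\omega$ in (iii) is automatic: any sequence $\lambda_n\to\tau$ through shrinking horospheres has $\phi(\lambda_n)\to\omega$, while the B-point property simultaneously forces $\phi(\lambda_n)\to\phi(\tau)$.

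The delicate ingredient is the weighted Cauchy--Schwarz step in (i)$\Rightarrow$(iii): the weights $w_j$ must be chosen to split the factors $\|x^j_\tau(\delta_M)\|^2$ so that the Pythagorean identity produces $1-|\phi(\lambda)|^2$ on one side while the other side organises the horosphere radii into the \emph{asymmetric} combination $\max\{R_1, R_2/M\}$ paired with the quadratic form $K_\tau(M) = \|x^1\|^2 + M\|x^2\|^2$. Once this is correctly set up the remaining implications are routine uses of the machinery assembled in Section \ref{prelims} and earlier in this section.
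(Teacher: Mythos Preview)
Your proposal is correct and follows essentially the same approach as the paper: the key model identity from Lemma~\ref{basicmodelsetup}, a weighted Cauchy--Schwarz to produce the horosphere containment, and testing along $\lambda_n=\tau-t_n\delta_M$ to close the loop. The only cosmetic differences are that you run the cycle in the opposite order ((i)$\Rightarrow$(iii)$\Rightarrow$(ii)$\Rightarrow$(i) versus the paper's (i)$\Rightarrow$(ii)$\Rightarrow$(iii)$\Rightarrow$(i)), you split the Cauchy--Schwarz with weights $(1-|\lambda^j|^2)^{-1}$ rather than $(1,M)$, and you make the appeal to Proposition~\ref{directdersincreasing} in (ii)$\Rightarrow$(i) explicit where the paper's ``argue as above'' leaves it implicit.
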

\begin{proof}
Let $(M, u)$ be a model for $\phi.$ \par 
First, we show that (i) implies (ii). Assuming (i) holds, set $\delta_M=(\tau^1, \tau^2 M)$ and fix a decreasing null sequence $\{r_n\}.$ Since $\tau$ is a B-point, Theorem \ref{generaldir} allows us to deduce that
\begin{equation} \label{sth}
||x^1_{\tau}(\delta_M)||^2+M||x^2_{\tau}(\delta_M)||^2\le \alpha,
\end{equation}
 and also $\lim_n u_{\tau-r_n\delta_M}=x_{\tau}(\delta_M)$ (in norm). Now, assume $M\ge 1$ and put $\lambda_n=\tau-r_n\delta_M.$ (\ref{basicmodelform}) allows us to write:
 $$\lim_n\frac{1-|\phi(\lambda_n)|}{1-||\lambda_n||}=\lim_n\frac{1-|\phi(\lambda_n)|^2}{1-||\lambda_n||^2} $$
$$=\lim_n\frac{1-|\phi(\lambda_n)|^2}{1-|\lambda^1_n|^2}  $$
 $$=\lim_n \bigg(||u^1_{\lambda_n}||^2+ \frac{1-|\lambda^2_n|^2}{1-|\lambda^1_n|^2}||u^2_{\lambda_n}||^2   \bigg)$$
 $$=||x^1_{\tau}(\delta_M)||^2+M||x^2_{\tau}(\delta_M)||^2 $$
 $$\le \alpha.$$
 The proof for $M<1$ is entirely analogous and is omitted. \par 
 Next, we show that (ii) implies (iii). The assumptions in (ii) clearly imply that $\tau$ is a B-point for $\phi.$ But then, we can argue as above to deduce that (\ref{sth}) holds. Thus, we can use (\ref{basicmodelsetupeq}) to obtain
 $$|1-\phi(\lambda)\overline{\phi(\tau)}|\le |1-\lambda^1\overline{\tau^1}|\cdot||x^1{(\delta_M)}||\cdot||u^1_{\lambda}||+ |1-\lambda^2\overline{\tau^2}|\cdot||x^2{(\delta_M)}||\cdot||u^2_{\lambda}||,$$
 for all $\lambda\in\DD^2.$
Setting $R_j=\frac{|\tau^j-\lambda^j|^2}{1-|\lambda^j|^2}$ ($j=1, 2$) and applying Cauchy-Schwarz then gives us 
$$|\phi(\tau)-\phi(\lambda)|^2\le \big(||x^1{(\delta_M)}||^2+M||x^2{(\delta_M)}||^2 \big)\bigg(|\tau^1-\lambda^1|^2||u^1_{\lambda}||^2+\frac{|\tau^2-\lambda^2|^2}{M}||u^2_{\lambda}||^2  \bigg) $$
$$\le \alpha \max\{R_1, R_2/M \}\big((1-|\lambda^1|^2)||u^1_{\lambda}||^2+(1-|\lambda^2|^2)||u^2_{\lambda}||^2 \big) $$
$$=\max\{\alpha R_1, \alpha R_2/M\}\big(1-|\phi(\lambda)|^2\big),$$
which implies 
$$\frac{|\phi(\tau)-\phi(\lambda)|^2}{1-|\phi(\lambda)|^2}\le \max\{\alpha R_1, \alpha R_2/M\} \hspace{0.4 cm} (\lambda\in\DD^2)$$
and our proof is complete. \par 
Lastly, we show that (iii) implies (i). Set $\lambda_n=\tau-r_n\delta_M$, where $\{r_n\}$ is a decreasing null sequence. Assuming (iii) holds, we obtain  
$$\frac{|\omega-\phi(\lambda_n)|^2}{1-|\phi(\lambda_n)|^2}\le \alpha \max\bigg\{\frac{|\tau^1-\lambda^1_n|^2}{1-|\lambda^1_n|^2},\frac{1}{M}\frac{|\tau^2-\lambda^2_n|^2}{1-|\lambda^2_n|^2}   \bigg\} $$
$$=\alpha\max\bigg\{\frac{r_n}{2-r_n},  \frac{r_n}{2-Mr_n}\bigg\}$$
$$=R_n.$$
Thus,  we can write 
$$\phi(\lambda_n)\in \text{cl}\big(E(\omega, R_n))=\text{cl}\bigg(D\bigg(\frac{\omega}{R_n+1} ,\frac{R_n}{R_n+1}\bigg)\bigg),$$
for all $n\ge 1.$ \par 
Now, assume $M\ge 1$ (the proof in the case where $M<1$ will be entirely analogous). Then, $R_n=\frac{\alpha r_n}{2-Mr_n}$, $||\lambda_n||=1-r_n$ and we can compute 
$$\frac{1-|\phi(\lambda_n)|}{1-||\lambda_n||}=\frac{1-|\phi(\lambda_n)|}{r_n} $$
$$\le \frac{|\phi(\lambda_n)-\omega|}{r_n} $$
$$\le \frac{2}{r_n}\frac{R_n}{R_n+1} $$
$$=\frac{2\alpha}{2+(\alpha-M)r_n}\to \alpha, $$
as $n\to\infty.$ This implies that $\tau$ is a B-point for $\phi$ and $\omega=\phi(\tau)$. Also, since 
$$\lim_n\frac{1-|\phi(\lambda_n)|}{1-||\lambda_n||}=||x^1_{\tau}(\delta_M)||^2+M||x^2_{\tau}(\delta_M)||^2,$$
we obtain that (\ref{sth}) holds. Theorem  \ref{generaldir} then finishes off the proof.
\end{proof}
\begin{remark}
In this theorem, we only considered points $\tau$ in the distinguished boundary. For facial B-points, the situation is more straightforward; see \cite[Theorem 3.2]{AMYfacial}.
\end{remark}
\begin{remark}
Observe that if we assume $$\lim_{r\to 1-}\frac{1-|\phi(r\tau)|}{1-||r\tau||}=\liminf_{\lambda\to\tau}\frac{1-|\phi(\lambda)|}{1-||\lambda||}\le \alpha,$$
we obtain that (ii) holds with $M=1.$ Hence, 
$$\phi(E(\tau, R, R))\subset E(\phi(\tau), \alpha\max\{R, R\})=E(\phi(\tau), \alpha R),$$
for all $R>0,$ which is the usual statement of Julia's inequality over the bidisk. 
\end{remark}
\begin{remark}\label{aremark}
Julia-type inequalities like the one in Theorem \ref{Juliarevisited}(iii) were also considered by Frosini in \cite{FrosiniBusemann}, where she used Busemann sublevel sets to obtain analogous statements. Specifically, her Julia-type lemma \cite[Theorem 1]{FrosiniBusemann} depends on the behavior of $\phi$ along chosen complex
geodesics that approach the boundary point $\tau$. Theorem \ref{Juliarevisited} can then be viewed as a refinement of that result, as it essentially says that every ``weighted" version of Julia's inequality is equivalent to an inequality involving certain directional derivatives of $\phi$ at the corresponding boundary B-point. 
\end{remark}

 \small
\section{CRITERIA FOR DENJOY-WOLFF POINTS} \label{charactDW}
\large  We will now use our work from Section \ref{generalBCdir} to study Type I/II DW points, as defined in Section \ref{prelims}. \par 
We start with two lemmas.
\begin{lemma}\label{70}
Let $\phi\in\mathcal{S}_2$ with model $(M, u)$ and assume $\xi(\mu), \mu$ are points in $\DD$ such that $\phi(\xi(\mu), \mu)=\xi(\mu).$ Then,  $||u^1_{(\xi(\mu),\mu)}||\le 1$. Also, $||u^1_{(\xi(\mu),\mu)}||= 1$ if and only if $u^2_{(\xi(\mu),\mu)}=0.$
\end{lemma}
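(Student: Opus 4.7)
The plan is to apply the defining model identity \eqref{basicmodelform} with both slots set equal to the fixed-point data. Specifically, taking $\lambda=\mu=(\xi(\mu),\mu)$ and using $\phi(\xi(\mu),\mu)=\xi(\mu)$, the left-hand side becomes $1-|\xi(\mu)|^2$, while the right-hand side simplifies to
\[
(1-|\xi(\mu)|^2)\,\|u^1_{(\xi(\mu),\mu)}\|^2+(1-|\mu|^2)\,\|u^2_{(\xi(\mu),\mu)}\|^2.
\]
Rearranging yields the single identity
\[
(1-|\xi(\mu)|^2)\bigl(1-\|u^1_{(\xi(\mu),\mu)}\|^2\bigr)=(1-|\mu|^2)\,\|u^2_{(\xi(\mu),\mu)}\|^2,
\]
from which the entire lemma will follow.

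Next, since $\xi(\mu),\mu\in\DD$, both prefactors $1-|\xi(\mu)|^2$ and $1-|\mu|^2$ are strictly positive. The right-hand side of the displayed identity is nonnegative, so the left-hand side must be nonnegative as well; dividing by $1-|\xi(\mu)|^2>0$ gives $\|u^1_{(\xi(\mu),\mu)}\|^2\le 1$, proving the first claim. For the equivalence, note that $\|u^1_{(\xi(\mu),\mu)}\|=1$ is equivalent to the left-hand side vanishing, which (since $1-|\mu|^2>0$) is equivalent to $\|u^2_{(\xi(\mu),\mu)}\|^2=0$, i.e.\ $u^2_{(\xi(\mu),\mu)}=0$.

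There is no real obstacle here: the statement is a direct diagonal evaluation of \eqref{basicmodelform}, and the key observation is simply that the coefficients $1-|\xi(\mu)|^2$ and $1-|\mu|^2$ are strictly positive because $\xi(\mu),\mu$ lie in the open disk (so $(\xi(\mu),\mu)$ is an interior fixed-point-like datum, not a boundary point where the algebra would degenerate). This is why no appeal to cluster sets or to Lemma \ref{basicmodelsetup} is required — only the unmodified model formula is used.
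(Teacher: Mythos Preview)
Your proof is correct and follows exactly the same approach as the paper: substitute $(\xi(\mu),\mu)$ into both slots of the model formula \eqref{basicmodelform} and use that $1-|\xi(\mu)|^2$ and $1-|\mu|^2$ are strictly positive. Your write-up is simply a slightly more detailed version of the paper's two-line argument.
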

\begin{proof}
In (\ref{basicmodelform}), set $(\lambda^1, \lambda^2)=(\mu^1, \mu^2)=(\xi(\mu), \mu)$ to obtain
$$1-|\xi(\mu)|^2=1-|\phi(\xi(\mu), \mu)|^2=(1-|\xi(\mu)|^2)||u^1_{(\xi(\mu), \mu)}||^2+(1-|\mu|^2)||u^2_{(\xi(\mu), \mu)}||^2.$$
Since $|\xi(\mu)|, |\mu|<1,$ the conclusions of the lemma follow easily.
\end{proof}
The next lemma is well-known (e.g. it appears as Theorem 2 in \cite{HERVE}). We include a proof for the sake of completeness.
\begin{lemma} \label{identityslices}
Assume $\phi\in\mathcal{S}_2$ and that there exists $\mu_0\in\DD$ such that the slice function $\phi_{\mu_0}$ is the identity on $\DD.$ Then, $\phi\equiv\pi_1$. 
\end{lemma}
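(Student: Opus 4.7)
The plan is to prove by induction on $n \ge 1$ that $\partial_\mu^n \phi(\cdot, \mu_0) \equiv 0$ on $\DD$. Once this is established, for each fixed $\lambda \in \DD$ the analytic function $\mu \mapsto \phi(\lambda, \mu) - \lambda$ has a Taylor expansion at $\mu_0$ whose coefficients all vanish, hence vanishes identically on $\DD$, giving $\phi(\lambda, \mu) = \lambda$ on all of $\DD^2$.

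For each $\lambda \in \DD$, the slice $f_\lambda(\mu) := \phi(\lambda, \mu)$ is a Schur function with $f_\lambda(\mu_0) = \lambda$. Composing with the M\"obius automorphisms $A_{\mu_0}(\mu) = (\mu - \mu_0)/(1 - \overline{\mu_0}\mu)$ and $B_\lambda(z) = (z - \lambda)/(1 - \overline{\lambda}z)$ produces the Schur function $G_\lambda := B_\lambda \circ f_\lambda \circ A_{\mu_0}^{-1}$, which satisfies $G_\lambda(0) = 0$. Assuming inductively that $\partial_\mu^k\phi(\cdot, \mu_0) \equiv 0$ on $\DD$ for $1 \le k \le n-1$, a direct Taylor-series (Fa\`a di Bruno) calculation collapses because the lower-order derivatives of $f_\lambda$ at $\mu_0$ vanish, yielding $G_\lambda^{(k)}(0) = 0$ for $1 \le k \le n-1$ and
\[
G_\lambda^{(n)}(0) \;=\; \frac{(1 - |\mu_0|^2)^n}{1 - |\lambda|^2}\,\partial_\mu^n\phi(\lambda, \mu_0).
\]
The higher-order Schwarz lemma applied to $G_\lambda$ (now with a zero of order $\ge n$ at $0$) gives $|G_\lambda^{(n)}(0)| \le n!$, and rearranging produces the pointwise bound
\[
\bigl|\partial_\mu^n\phi(\lambda, \mu_0)\bigr| \;\le\; \frac{n!\,(1 - |\lambda|^2)}{(1 - |\mu_0|^2)^n}, \qquad \forall\, \lambda \in \DD.
\]

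The map $\lambda \mapsto \partial_\mu^n\phi(\lambda, \mu_0)$ is analytic on $\DD$, and the right-hand side above tends to $0$ as $|\lambda| \to 1$. Applying the maximum modulus principle on each disk $\{|\lambda| \le r\}$ and letting $r \to 1^-$ forces this analytic function to vanish identically on $\DD$, completing the induction.

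The main obstacle is verifying the chain-rule formula for $G_\lambda^{(n)}(0)$ cleanly. The key simplification is that, when $f_\lambda^{(k)}(\mu_0) = 0$ for $k < n$, the Taylor expansion of $G_\lambda$ near $0$ reduces to $G_\lambda(w) = (1 - |\lambda|^2)^{-1}(f_\lambda(A_{\mu_0}^{-1}(w)) - \lambda) + O(w^{2n})$, in which only the single "top-order" term involving $f_\lambda^{(n)}(\mu_0)$ together with the leading M\"obius derivatives $(A_{\mu_0}^{-1})'(0) = 1 - |\mu_0|^2$ and $B_\lambda'(\lambda) = 1/(1 - |\lambda|^2)$ survives, giving the displayed coefficient without further bookkeeping.
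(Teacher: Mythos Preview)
Your proof is correct and takes a genuinely different route from the paper's argument. The paper proves this lemma using the model $(M,u)$ for $\phi$: from the identity slice it derives $\|u^1_{(\lambda,\mu_0)}\|\le 1$ for all $\lambda$, then for each $\tau^1\in\mathbb{T}$ produces a cluster vector $u_{(\tau^1,\mu_0)}\in Y_{(\tau^1,\mu_0)}$ with $u^2_{(\tau^1,\mu_0)}=0$ and $\|u^1_{(\tau^1,\mu_0)}\|\le 1$; plugging this back into the model identity \eqref{basicmodelsetupeq} and comparing inner products forces $u^1_{(\lambda,\mu_0)}$ to be a fixed unit vector independent of $\lambda$, from which $\phi\equiv\pi^1$ follows by an affine comparison. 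Your argument, by contrast, is entirely elementary: it is the Schwarz--Pick estimate applied to each vertical slice, bootstrapped through a higher-order Schwarz lemma and the maximum principle. The computation of $G_\lambda^{(n)}(0)$ is right, since under the inductive hypothesis $f_\lambda(A_{\mu_0}^{-1}(w))-\lambda$ is $O(w^n)$ and $B_\lambda(\lambda+h)=h/(1-|\lambda|^2)+O(h^2)$ introduces only terms of order $\ge 2n$. Your approach is shorter and requires no Hilbert-space machinery; the paper's model-theoretic proof, while heavier here, is included because it rehearses exactly the manipulations (weak limits of $u_\lambda$, the identity \eqref{basicmodelsetupeq}, comparison of inner products) that drive the main results of Sections~\ref{generalBCdir}--\ref{charactDW}.
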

\begin{proof}
Let $(M, u)$ be a model for $\phi.$ We can use (\ref{basicmodelform}) to obtain 
\begin{equation}\label{yesanothereq}
1=\frac{1-|\phi(\lambda, \mu_0)|^2}{ 1-|\lambda|^2}=||u^1_{(\lambda, \mu_0)}||^2+\frac{1-|\mu_0|^2}{ 1-|\lambda^2|}||u^2_{(\lambda, \mu_0)}||^2,   
\end{equation}
for all $\lambda\in\DD.$ Thus, $||u^1_{(\lambda, \mu_0)}||\le 1,$ for all $\lambda\in\DD,$ with equality if and only if $u^2_{(\lambda, \mu_0)}=0.$
\par Now, fix $\tau^1\in\mathbb{T}$ and let $\lambda\to\tau^1$ in (\ref{yesanothereq}) to obtain that $(\tau^1, \mu_0)$ is a B-point for $\phi,$ $\phi(\tau^1, \mu_0)=\tau^1$ and also that there exists $u_{(\tau^1, \mu_0)}\in Y_{(\tau^1, \mu_0)}$ such that $||u^1_{(\tau^1, \mu_0)}||\le 1$ and $u^2_{(\tau^1, \mu_0)}=0.$ (\ref{basicmodelsetupeq}) then implies that 
\begin{equation}\label{whatever}
1-\phi(\lambda, \mu)\overline{\tau^1}=(1-\lambda\overline{\tau^1})\langle u^1_{(\lambda, \mu)}, u^1_{(\tau^1, \mu_0)} \rangle,   
\end{equation}
for all $\lambda, \mu\in\DD.$ Setting $\mu=\mu_0$ then gives us 
$$1-\lambda\overline{\tau^1}=(1-\lambda\overline{\tau^1})\langle u^1_{(\lambda, \mu_0)}, u^1_{(\tau^1, \mu_0)} \rangle,$$
hence $$\langle u^1_{(\lambda, \mu_0)}, u^1_{(\tau^1, \mu_0)} \rangle=1\ge ||u^1_{(\lambda, \mu_0)}||^2, ||u^1_{(\tau^1, \mu_0)}||^2.$$
 This implies that $u^1_{(\lambda, \mu_0)}=u^1_{(\tau^1, \mu_0)}$
 (and both have to be unit vectors) and also $u^2_{(\lambda, \mu_0)}=0$, for all $\lambda\in\DD$ . Hence,
 $$1-\phi(\lambda, \mu)\overline{\lambda'}=1-\phi(\lambda, \mu)\overline{\phi(\lambda', \mu_0)}$$ $$=(1-\lambda\overline{\lambda'})\langle u^1_{(\lambda, \mu)}, u^1_{(\lambda', \mu_0)}\rangle=$$ $$=(1-\lambda\overline{\lambda'})\langle u^1_{(\lambda, \mu)}, u^1_{(\tau^1, \mu_0)}\rangle,$$
 for all $\lambda, \mu, \lambda'\in\DD.$ Since both sides are affine functions of $\lambda'$, we obtain $\phi(\lambda, \mu)=\lambda,$ for all $\lambda, \mu\in\DD,$ as desired.
\end{proof}
Now, we use model theory to give a new proof of the fact that every $\phi\in\mathcal{S}_2$ (that is not a coordinate projection) must either be a Type I or a Type II function, a result originally due to Herv\'{e} (see \cite[Theorem 1]{HERVE}).
\begin{theorem} \label{130}
Every $\phi\in\mathcal{S}_2$ such that $\phi\neq \pi^1$ (resp., $\phi\neq \pi^2$) is either a left Type I (resp. right Type I) or a left Type II (resp. right Type II) function.
\end{theorem}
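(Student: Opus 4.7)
The plan is to construct a candidate Denjoy-Wolff function $\xi : \DD \to \text{cl}(\DD)$ that sends each $\mu$ to the Denjoy-Wolff point of the slice $\phi_\mu$, and then to show via a dichotomy that either $\xi$ takes values in $\DD$ and is holomorphic (giving left Type II) or $\xi$ is constant on $\mathbb{T}$ (giving left Type I). The hypothesis $\phi \neq \pi^1$ together with Lemma \ref{identityslices} guarantees that no slice $\phi_\mu$ is the identity on $\DD$, so Schwarz's lemma gives each $\phi_\mu$ at most one interior fixed point and the classical one-variable Denjoy-Wolff theorem produces a well-defined $\xi(\mu) \in \text{cl}(\DD)$.

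Suppose first that $\xi(\mu) \in \DD$ for every $\mu$. Then $\xi$ solves $H(\xi(\mu), \mu) = 0$ with $H(\lambda, \mu) := \phi(\lambda, \mu) - \lambda$. The derivative $\partial_\lambda H(\xi(\mu), \mu) = \phi_\mu'(\xi(\mu)) - 1$ is nonzero: if it vanished, then conjugating the interior fixed point to the origin and applying the equality case of Schwarz's lemma would force $\phi_\mu = \text{id}$, contradicting Lemma \ref{identityslices}. The implicit function theorem thus yields a local holomorphic solution of $H = 0$, which by uniqueness of the interior fixed point must coincide with $\xi$; hence $\xi : \DD \to \DD$ is holomorphic and $\phi$ is left Type II.

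Suppose instead that some $\mu_0 \in \DD$ satisfies $\xi(\mu_0) = \tau^1 \in \mathbb{T}$; I claim $\xi \equiv \tau^1$. Approaching $(\tau^1, \mu_0)$ along $\lambda = (r\tau^1, \mu_0)$ as $r \to 1^-$ and using one-variable Julia for $\phi_{\mu_0}$, we see that $(\tau^1, \mu_0)$ is a B-point for $\phi$ with
$$\liminf_{\lambda \to (\tau^1, \mu_0)} \frac{1 - |\phi(\lambda)|}{1 - ||\lambda||} \le \phi_{\mu_0}'(\tau^1) =: \alpha_0 \le 1.$$
Since $|\mu_0| < 1$ gives $E(\mu_0, R) = \DD$, the bidisk Julia inequality (\ref{classicalJulia}) degenerates to
$$\phi_\mu(E(\tau^1, R)) \subset E(\tau^1, \alpha_0 R) \subset E(\tau^1, R), \quad \forall \mu \in \DD, \; \forall R > 0.$$
Because no $\phi_\mu$ is the identity, the one-variable Julia--Wolff theorem forces $\tau^1$ to be the Denjoy-Wolff point of every $\phi_\mu$, so $\xi \equiv \tau^1$ and $\phi$ is left Type I. The right-Type statement follows by applying the same argument to $\widetilde\phi$.

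The main obstacle is the final step, namely transferring Denjoy-Wolff behavior from a single slice $\phi_{\mu_0}$ to all slices. This is made possible by the observation that $(\tau^1, \mu_0)$ is a \emph{facial} B-point of $\phi$: the corresponding bidisk horosphere degenerates into the product $E(\tau^1, R) \times \DD$, which constrains $\phi$ on every horizontal line simultaneously and produces horocycles at $\tau^1$ that are invariant under every slice of $\phi$ at once.
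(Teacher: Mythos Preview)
Your proof is correct and complete, but it follows a genuinely different route from the paper's. The paper's stated purpose here is to give a \emph{new, model-theoretic} proof of Herv\'{e}'s dichotomy: in the Type I case it extracts a cluster vector $u_{(\tau^1,\mu_0)}\in Y_{(\tau^1,\mu_0)}$ with $u^2=0$ and uses the model identity (\ref{basicmodelsetupeq}) to force all slices to share the boundary Denjoy--Wolff point; in the Type II case it manipulates the model formula to exhibit $\frac{1-\xi(\mu)\overline{\xi(\mu')}}{1-\mu\overline{\mu'}}$ as a Schur product of positive semi-definite kernels and then invokes automatic holomorphy of models. Your argument, by contrast, is essentially the classical one: the implicit function theorem (with the Schwarz--lemma observation that $\phi_\mu'(\xi(\mu))\neq 1$) handles the Type II branch, and the facial Julia inequality (\ref{classicalJulia}) propagates the horocycle invariance from a single slice to all slices in the Type I branch. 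Your approach is more elementary and closer in spirit to Herv\'{e}'s original; the paper's approach is intrinsic to the Hilbert-space framework it is building and, in particular, produces along the way the cluster vectors $u_{\tau^1}$ that are reused in the subsequent characterizations (Theorem~\ref{TypeIcharabridged}, Proposition~\ref{TypeIuniqueness}).
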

\begin{proof} First, we prove the left Type I/II version of the theorem. Note that, since $\phi\neq\pi^1$, Lemma \ref{identityslices} implies that $\phi_{\mu}$ is not the identity on $\DD$, for any $\mu\in\DD.$ Thus, every such slice function will have a unique Denjoy-Wolff point (either on the interior of the disk or on the boundary). 
\par To begin, assume that there exists some $\mu_0\in\DD$ such that the slice $\phi_{\mu_0}$ has its Denjoy-Wolff point $\tau^1$ on $\mathbb{T}$. Let $\lambda_n=\rho_n \tau^1,$ where $\{\rho_n\}$ is an increasing sequence of positive numbers tending to $1.$ By the single-variable theory of Denjoy-Wolff points, we have $\lim_n\phi_{\mu_0}(\lambda_n)=\tau^1$ and
$$\frac{1-|\phi(\lambda_n, \mu_0)|^2}{1-||(\lambda_n, \mu_0)||^2}=\frac{1-|\phi_{\mu_0}(\lambda_n)|^2}{1-|\lambda_n|^2}\to \alpha_{\mu_0}\le1,$$
as $n\to\infty.$ Thus, $(\tau^1, \mu_0)$ is a B-point for $\phi.$ Using the model formula for $\phi,$ we also see that 
$$||u^1_{(\lambda_n, \mu_0)}||^2+\frac{1-|\mu_0|^2}{1-|\lambda_n|^2}||u^2_{(\lambda_n, \mu_0)}||^2=\frac{1-|\phi_{\mu_0}(\lambda_n)|^2}{1-|\lambda_n|^2}, \hspace{0.3 cm} \forall n\ge 1.$$
Letting $n\to\infty$ and taking into account that $\frac{1-|\mu_0|^2}{1-|\lambda_n|^2}\to\infty$, we obtain the existence of $u_{(\tau^1, \mu_0)}\in Y_{(\tau^1, \mu_0)}$ satisfying $||u^1_{(\tau^1,\mu_0)}||\le \alpha_{\mu_0}\le 1$ and $u^2_{(\tau^1,\mu_0)}=0$. In view of (\ref{basicmodelsetupeq}), we can write
\begin{equation}\label{81}
1-\phi(\lambda, \mu)\overline{\tau^1}=(1-\lambda\overline{\tau^1})\langle u^1_{(\lambda, \mu)}, u^1_{(\tau^1, \mu_0)}\rangle,
\end{equation}
for all $\lambda, \mu\in \DD.$ \par Now, assume there exists some slice $\phi_{\mu_1}$ such that $\mu_1\neq \mu_0$ and $\phi_{\mu_1}$ has an interior fixed point $p\in\DD.$ Set $(\lambda, \mu)=(p, \mu_1)$ in (\ref{81}) to obtain 
$$1-p\overline{\tau^1}=(1-p\overline{\tau^1})\langle u^1_{(p, \mu_1)}, u^1_{(\tau^1, \mu_0)}\rangle,$$
hence $\langle u^1_{(p, \mu_1)}, u^1_{(\tau^1, \mu_0)}\rangle=1$. Lemma (\ref{70}) then implies that $u^1_{(p, \mu_1)}=u^1_{(\tau^1, \mu_0)}$ (and both will be unit vectors) and $u^2_{(p, \mu_1)}=0$. Thus, we may substitute $(\mu^1, \mu^2)=(p, \mu_1)$ in (\ref{basicmodelform}) to obtain
\begin{equation}\label{82}
1-\phi(\lambda, \mu)\overline{p}=(1-\lambda\overline{p})\langle u^1_{(\lambda, \mu)}, u^1_{(p, \mu_1)}\rangle=(1-\lambda\overline{p})\langle u^1_{(\lambda, \mu)}, u^1_{(\tau^1, \mu_0)}\rangle,
\end{equation}
for all $\lambda, \mu \in\DD. $ Comparing (\ref{82}) with (\ref{81}) then allows us to deduce that $\phi$ is equal to the identity, a contradiction. \par 
So far, we have proved that every slice function $\phi_{\mu}$ must have its Denjoy-Wolff point on the boundary of $\DD$ (under the assumption that at least one of them does). We now show that $\tau^1$ (the Denjoy-Wolff point of the slice $\phi_{\mu_0}$ we started with) is actually the Denjoy-Wolff point of all slices $\phi_{\mu}.$  Indeed, suppose we can find a slice $\phi_{\mu_1}$ with a different Denjoy-Wolff point $\sigma^1\in\mathbb{T}.$ Arguing as in the beginning of the proof, we obtain that $(\sigma^1, \mu_1)$ is a B-point for $\phi$, its value at $(\sigma^1, \mu_1)$ is $\sigma^1$ and also there exists $u_{(\sigma^1, \mu_1)}\in Y_{(\sigma^1, \mu_1)}$ satisfying $||u^1_{(\sigma^1, \mu_1)}||\le 1$ and $u^2_{(\sigma^1, \mu_1)}=0$. (\ref{basicmodelsetupeq}) implies that
\begin{equation}\label{85}
1-\phi(\lambda, \mu)\overline{\sigma^1}=(1-\lambda\overline{\sigma^1})\langle u^1_{(\lambda, \mu)}, u^1_{(\sigma^1, \mu_1)}\rangle, \hspace{0.3 cm} \forall \lambda,\mu\in\DD.
\end{equation}
If in (\ref{85}) we let $(\lambda, \mu)\to (\tau^1, \mu_0)$ in such a way that $u^1_{(\lambda, \mu)}$ converges weakly to $u^1_{(\tau^1, \mu_0)}$, we obtain (since $\phi(\tau^1, \mu_0)=\tau^1$ and $\sigma^1\neq \tau^1$) $$\langle  u^1_{(\sigma^1, \mu_1)},  u^1_{(\tau^1,\mu_0)}\rangle=1\ge || u^1_{(\sigma^1, \mu_1)}||^2, ||u^1_{(\tau^1,\mu_0)}||^2,$$ hence $ u^1_{(\sigma^1, \mu_1)}= u^1_{(\tau^1,\mu_0)}$. Comparing (\ref{81}) with (\ref{85}) then gives us that $\phi$ is equal to the identity, a contradiction.
\par 
On the other hand, assume that every slice $\phi_{\mu}$ has a (necessarily unique) interior fixed point $\xi(\mu).$ To show that $\phi$ is a left Type II function, it suffices to prove that $\xi:\DD\to\DD$ is actually a holomorphic function. First, note that putting $(\lambda^1, \lambda^2)=(\xi(\mu), \mu)$ and $(\mu^1, \mu^2)=(\xi(\mu'), \mu')$ in (\ref{basicmodelform}) gives us 
$$
 1-\xi(\mu)\overline{\xi(\mu')}$$ \begin{equation} \label{45}=(1-\xi(\mu)\overline{\xi(\mu')}) \langle u^1_{(\xi(\mu), \mu)}, u^1_{(\xi(\mu'), \mu')}\rangle  +(1-\mu\overline{\mu'}) \langle u^2_{(\xi(\mu), \mu)}, u^2_{(\xi(\mu'), \mu')}\rangle,
\end{equation}
for all $\mu, \mu'\in\DD.$ \par 
  Now, if $||u^1_{(\xi(\mu'), \mu')}||=1$ for some $\mu'\in \DD$, the model formula for $\phi$ yields (since $u^2_{(\xi(\mu'), \mu')}=0$ in view of Lemma \ref{70})
\begin{equation}\label{51} 1-\phi(\lambda, \mu)\overline{\xi(\mu')}=(1-\lambda\overline{\xi(\mu')})\langle u^1_{(\lambda, \mu)}, u^1_{(\xi(\mu'), \mu')}\rangle ,
\end{equation} 
for all $\lambda, \mu.$ Plugging in $(\lambda, \mu)=(\xi(\mu), \mu)$ gives us $\langle u^1_{(\xi(\mu), \mu)}, u^1_{(\xi(\mu'), \mu')}\rangle=1$, for all $\mu,$ hence $u^1_{(\xi(\mu), \mu)}=u^1_{\xi}=$constant (of norm $1$) and $u^2_{(\xi(\mu), \mu)}=0$ for all $\mu$. Thus, we obtain
\begin{equation} \label{52} 1-\phi(\lambda, \mu)\overline{\xi(\sigma)}=(1-\lambda\overline{\xi(\sigma)})\langle u^1_{(\lambda, \mu)}, u^1_{\xi}\rangle , \end{equation}
for all $\lambda, \mu,\sigma\in\DD.$  \par 
There are now two separate cases to examine. Either $\phi(\lambda, \mu)=\lambda\langle u^1_{(\lambda, \mu)}, u^1_{\xi}\rangle$ for all $\lambda, \mu,$ in which case (\ref{52}) implies that $\langle u^1_{(\lambda, \mu)}, u^1_{\xi}\rangle=1$ (for all $\lambda, \mu$), hence $\phi=\pi^1,$ a contradiction, or we can find $\lambda_0, \mu_0\in\DD$ such that $\phi(\lambda_0, \mu_0)\neq\lambda_0\langle u^1_{(\lambda_0, \mu_0)}, u^1_{\xi}\rangle$. Then, (\ref{52}) implies that 
$$\overline{\xi(\sigma)}=\frac{\langle u^1_{(\lambda_0, \mu_0)}, u^1_{\xi}\rangle-1}{\lambda_0\langle u^1_{(\lambda_0, \mu_0)}, u^1_{\xi}\rangle-\phi(\lambda_0, \mu_0)},$$
for all $\sigma\in\DD.$ Thus, $\xi$ is constant (and trivialy holomorphic).
\par 
There is one more possibility to consider: suppose that $||u^1_{(\xi(\mu'), \mu')}||< 1$ for all $\mu'.$ (\ref{45}) then becomes
\begin{equation}\label{46}
 1-\xi(\mu)\overline{\xi(\mu')}=(1-\mu\overline{\mu'})\frac{\langle u^2_{(\xi(\mu), \mu)}, u^2_{(\xi(\mu'), \mu')}\rangle}{1- \langle u^1_{(\xi(\mu), \mu)}, u^1_{(\xi(\mu'), \mu')}\rangle },
\end{equation}
for all $\mu, \mu'\in\DD$. In other words, $$\frac{1-\xi(\mu)\overline{\xi(\mu')}}{1-\mu\overline{\mu'}}$$ is the Schur product of the positive-semidefinite kernels $\langle u^2_{(\xi(\mu), \mu)}, u^2_{(\xi(\mu'), \mu')}\rangle$ and $\frac{1}{1- \langle u^1_{(\xi(\mu), \mu)}, u^1_{(\xi(\mu'), \mu')}\rangle}$ (the latter is actually a complete Pick kernel, see \cite[Chapter 8]{OldPick}), hence it must be positive semi-definite as well. Automatic holomorphy of models (see \cite[Proposition 2.32]{NewPick}) then implies that $\xi$ is a holomorphic function on $\DD,$ concluding the proof.
\par 
 The right Type I/II version of the theorem follows by applying the left Type I/II version to the function $\widetilde{\phi}:\DD^2\to\DD$ defined by $\widetilde{\phi}(\lambda)=\phi(\lambda^2, \lambda^1)$, for all $\lambda\in\DD^2$.
\end{proof}
Next, we provide criteria for Type I DW points, as stated in Section \ref{prelims}. Recall that, given $\phi\in\mathcal{S}_2$ with model $(M, u)$ and a B-point $\tau\in\mathbb{T}^2,$ we have defined $\delta_M=(\tau^1, \tau^2 M)$ and $$K_{\tau}(M)=||x^1_{\tau}(\delta_M)||^2+M||x^2_{\tau}(\delta_M)||^2,$$  for all $M>0.$
\begin{proof}[Proof of Theorem \ref{TypeIcharabridged}]
First, we show that (iii) implies (ii). Indeed, assume that $\tau^1$ is the common Denjoy-Wolff point of all slice functions $\phi_{\mu}$ and let $|\sigma|\le 1.$ We will show that $(\tau^1, \sigma)$ is a left Type I DW point for $\phi$.  \par Fix a sequence $\{\mu_n\}\subset\DD$ tending to $\sigma$. Now, since $\tau^1$ is the Denjoy-Wolff point of $\phi_{\mu},$ we obtain that $\tau^1$ is a B-point for $\phi_{\mu},$ $\phi_{\mu}(\tau^1)=\tau^1$ and also 
$$\lim_{\lambda\xrightarrow[]{\text{nt}}\tau^1}\frac{1-|\phi_{\mu}(\lambda)|^2}{1-|\lambda|^2}\le 1,$$
for all $\mu\in\DD.$ Thus, it is possible to choose a sequence $\{\lambda_n\}\subset\DD$ converging to $\tau^1$ nontangentially, and sufficiently fast, so that we obtain $\lim_{n}\phi_{\mu_n}(\lambda_n)=\tau^1$, $\lim_n\frac{1-|\lambda_n|^2}{1-|\mu_n|^2}=0$ and also 
\begin{equation}\label{sthtypeI}
 \limsup_n\frac{1-|\phi(\lambda_n, \mu_n)|^2}{1-||(\lambda_n, \mu_n)||^2}=\limsup_n \frac{1-|\phi_{\mu_n}(\lambda_n)|^2}{1-|\lambda_n|^2}\le 1,   
\end{equation}
which implies that $(\tau^1, \sigma)$ is a B-point for $\phi$ and also $\phi(\tau^1, \sigma)=\tau^1.$ Moreover, the model formula for $\phi$ tells us 
$$||u^1_{(\lambda_n, \mu_n)}||^2+\frac{1-|\mu_n|^2}{1-|\lambda_n|^2}||u^2_{(\lambda_n, \mu_n)}||^2=\frac{1-|\phi(\lambda_n, \mu_n)|^2}{1-|\lambda_n|^2},$$
for all $n.$ Letting $n\to\infty$ and taking into account the limits $\lim_n\frac{1-|\lambda_n|^2}{1-|\mu_n|^2}=0$ and (\ref{sthtypeI}), we can deduce the existence of $u_{(\tau^1, \sigma)}\in Y_{(\tau^1, \sigma)}$ such that $||u^1_{(\tau^1, \sigma)}||\le 1$ and $u^2_{(\tau^1, \sigma)}=0$. This implies that $(\tau^1, \sigma)$ is a left Type I DW point for $\phi$. Since $\sigma$ was arbitrary, (ii) has been established.  \par 
That (ii) implies (i) is obvious. \par 
Now, we prove that (i) implies (iii). So, assume that there exists $|\sigma|\le 1$ such that $(\tau^1, \sigma)$ is a B-point for $\phi,$ $\phi(\tau^1, \sigma)=\tau^1$ and also there exists $u_{(\tau^1, \sigma)}\in Y_{(\tau^1, \sigma)}$ such that $||u^1_{(\tau^1, \sigma)}||\le 1$ and $u^2_{(\tau^1, \sigma)}=0$. We obtain 
\begin{equation}\label{slicemodel}
1-\phi_{\mu}(\lambda)\overline{\tau^1}=1-\phi(\lambda, \mu)\overline{\tau^1}=(1-\lambda\overline{\tau^1})\langle u^1_{(\lambda, \mu)}, u^1_{(\tau^1, \sigma)}\rangle,\end{equation}
for all $\lambda, \mu\in\DD.$ If we fix $\mu$, we may repeat the proof of ``(ii) implies (iii)" from Theorem \ref{Juliarevisited} to obtain  
$$\frac{|\tau^1-\phi_{\mu}(\lambda)|^2}{1-|\phi_{\mu}(\lambda)|^2}=\frac{|\tau^1-\phi(\lambda, \mu)|^2}{1-|\phi(\lambda, \mu)|^2}\le\alpha_{\sigma}\frac{|\tau^1-\lambda|^2}{1-|\lambda|^2},$$
for all $\lambda, \mu\in\DD,$ where $\alpha_{\sigma}=||u^1_{(\tau^1, \sigma)}||^2$.
Such an equality is then known to imply (see Section \ref{prelims}) that $\tau^1$ is a B-point for $\phi_{\mu}$, $\phi_{\mu}(\tau^1)=\tau^1$ and also that the angular derivative of $\phi_{\mu}$ at $\tau^1$ is equal to $\alpha_{\sigma}\le 1$, for all $\mu\in\DD.$ Since we also know (in view of Lemma \ref{identityslices}) that $\phi_{\mu}\neq \text{Id}_{\DD}$, for all $\mu\in\DD,$ we can conclude that $\tau^1$ is the common Denjoy-Wolff point of every slice function, i.e. (iii) holds. \par
Before we proceed, a few important observations are in order. Our previous arguments show that, if at least one point in the closed face $\{\tau^1\}\times\text{cl}(\DD)$ is a left Type I DW point for $\phi,$ then for every $|\sigma|\le 1$ there exists $u_{(\tau^1, \sigma)}=(u^1_{(\tau^1, \sigma)}, 0)\in Y_{(\tau^1, \sigma)}$ such that $||u^1_{(\tau^1, \sigma)}||\le 1$ and also (\ref{slicemodel}) holds, for all $\lambda, \mu\in\DD.$ Since $\sigma$ was arbitrary, (\ref{slicemodel})  implies that the vectors $u^1_{(\tau^1, \sigma)}$ do not actually depend on $\sigma,$ thus
$u_{(\tau^1, \sigma)}=u_{\tau^1}=(u^1_{\tau^1}, 0)$ for all $\sigma\in\text{cl}(\DD).$
In particular, letting $\phi'_{\mu}(\tau^1)$ denote the angular derivative of $\phi_{\mu}$ at $\tau^1,$ we obtain 
\begin{equation}\label{ang}
\phi'_{\mu}(\tau^1)=||u_{\tau^1}||^2\le 1,    
\end{equation}
for all $|\mu |< 1.$ Also, notice that, in view of Lemma \ref{twonullcoords}, $u_{\tau^1}$ will be the unique vector in $Y_{(\tau^1, \sigma)}$ with $M^2$-component equal to $0$, for all $|\sigma|\le 1.$ \par 
Next, we show that (iii) implies (v). Fix an arbitrary $\sigma\in\text{cl}(\DD)$. By our previous results, $(\tau^1, \sigma)$ is a B-point for $\phi,$ $\phi(\tau^1, \sigma)=\tau^1$ and also there exists $u_{\tau^1}=(u^1_{\tau^1}, 0)\in Y_{(\tau^1, \sigma)}$ (not depending on $\sigma$) such that $||u_{\tau^1}||\le 1.$ If we also assume $|\sigma|<1,$ then $(\tau^1, \sigma)$ is a facial B-point, so \cite[Theorem 3.2]{AMYfacial} implies that $Y_{(\tau^1, \sigma)}=\{u_{\tau^1}\}$ and also
$$\frac{D_{-(\tau^1, \sigma M)}\phi(\tau^1, \sigma)}{-\tau^1}=\frac{D_{-(\tau^1, \sigma M)}\phi(\tau^1, \sigma)}{-\phi(\tau^1, \sigma)}=||u_{\tau^1}||^2\le 1,$$
for all $M>0,$ as desired. On the other hand, assume that $|\sigma|=1$. We may apply Theorems \ref{generaldir} and \ref{BbutnotC} to obtain that 
$$\frac{D_{-(\tau^1, \sigma M)}\phi(\tau^1, \sigma)}{-\tau^1}=K_{(\tau^1, \sigma)}(M)\le ||u_{\tau^1}||^2\le 1,$$ 
for all $M>0$. Actually, one can deduce the even stronger statement
$$\lim_{M\to\infty}\frac{D_{-(\tau^1, \sigma M)}\phi(\tau^1, \sigma)}{-\tau^1}=\lim_{M\to\infty} K_{(\tau^1, \sigma)}(M)= ||u_{\tau^1}||^2=\phi'_{\mu}(\tau^1),$$
for all $\mu\in\DD.$ Since $\sigma$ was arbitrary, we have established (v). \par 
That (v) implies (iv) is evident, so all that remains is to show that (iv) implies (iii). So, assume there exists $(\tau^1, \sigma)\in\mathbb{T}\times\text{cl}(\DD)$  such that the assumptions of (iv) are satisfied. If $|\sigma|<1, $ then $Y_{(\tau^1, \sigma)}=\{(u^1_{(\tau^1, \sigma)}, 0)\}$ and for any $M>0$ we have
$$||u_{(\tau^1, \sigma)}||^2=\frac{D_{-(\tau^1, \sigma M)}\phi(\tau^1, \sigma)}{-\tau^1}\le 1.$$
This shows that $(\tau^1, \sigma)$ is a left Type I DW point for $\phi,$ which gives us (i), hence (iii) holds. On the other hand, assume $|\sigma|=1.$ Fix an increasing sequence $\{M_k\}$ tending to $\infty.$ Since, by assumption, we have 
$$\frac{D_{-(\tau^1, \sigma M_k)}\phi(\tau^1, \sigma)}{-\tau^1}\le 1,$$
for all $k,$ Theorem \ref{Juliarevisited} implies that 
$$\phi\big(E((\tau^1, \sigma), R_1, R_2)\big)\subset E(\tau^1, \max\{R_1, R_2/M_k \}), $$
for all $k\ge 1$ and $R_1, R_2>0.$ Letting $k\to\infty$ yields
$$\phi\big(E((\tau^1, \sigma), R_1, R_2)\big)\subset E(\tau^1, R_1\}), $$
for all $R_1, R_2>0,$ which translates into the inequality
$$\frac{|\tau^1-\phi_{\mu}(\lambda)|^2}{1-|\phi_{\mu}(\lambda)|^2}\le \frac{|\tau^1-\lambda|^2}{1-|\lambda|^2},$$
for all $\lambda, \mu\in\DD.$ As already mentioned during the proof of ``(i) implies (iii)", this implies that $\tau^1$ is the Denjoy-Wolff point of $\phi_{\mu},$ for all $\mu,$ hence (iii) holds.
\par Finally, to prove the right Type I-version of the theorem, notice that the function $\widetilde{\phi}:\DD^2\to\DD$ defined by $\widetilde{\phi}(\lambda)=\phi(\lambda^2, \lambda^1)$ ($\lambda\in\DD^2$) has $(\widetilde{M}, \widetilde{u})$ as a model, where $\widetilde{u}: \DD^2\to \widetilde{M}=M^2\oplus M^1$ is defined as 
$$\widetilde{u}(\lambda)=\langle \widetilde{u}^1_{\lambda}, \widetilde{u}^2_{\lambda},\rangle=\langle u^2_{(\lambda^2, \lambda^1)}, u^1_{(\lambda^2, \lambda^1)}\rangle,$$
for all $\lambda\in\DD^2.$ By definition, $(\sigma, \tau^2)$ is a right Type I DW point for $\phi$ if and only if $(\tau^2, \sigma)$ is a left Type I DW point for $\widetilde{\phi}.$ Thus, to obtain the right Type I-version of Theorem \ref{TypeIcharabridged}, one simply has to apply the left Type I-version of that same theorem to $\widetilde{\phi}.$
\end{proof}
We also establish a uniqueness result for Type I DW points.
\begin{proposition}\label{TypeIuniqueness}
Let $\phi\in\mathcal{S}_2$ be a left Type I function with model $(M, u)$ such that $\phi\neq\pi^1$ and $\tau^1\in\mathbb{T}$ is the common Denjoy-Wolff point of all maps $\phi_{\mu}.$ Then, there exists $u_{\tau^1}=(u^1_{\tau^1}, 0)\in M$ such that $||u_{\tau^1}||^2=\phi'_{\mu}(\tau^1)\le 1,$ for all $\mu.$ Moreover,
given any $\sigma=(\sigma^1, \sigma^2)\in \mathbb{T}\times\text{cl}(\DD),$ if 
\begin{itemize}
    \item[(i)] $\sigma^1=\tau^1$ and $|\sigma^2|=1,$ we have $u_{\tau^1}\in Y_{\sigma}$. Also, given any $v_{\sigma}\in Y_{\sigma}$, we have $v^2_{\sigma}=0$ if and only if $v_{\sigma}=u_{\tau^1}$. If, in addition, we assume that $\sigma$ is a C-point, we obtain that every $v_{\sigma}\in Y_{\sigma}$ that is not equal to $u_{\tau^1}$ must satisfy $||v^1_{\sigma}||>||u_{\tau^1}||$ and $v^2_{\sigma}\neq 0$;
    \item[(ii)] $\sigma^1=\tau^1$ and $|\sigma^2|<1,$ we have $Y_{\sigma}=\{u_{\tau^1}\};$
    \item[(iii)] $\sigma^1\neq \tau^1$, $\sigma$ is a B-point for $\phi$ and $\phi(\sigma)=\sigma^1,$ then every $v_{\sigma}\in Y_{\sigma}$ must satisfy  either $||v^1_{\sigma}||>1$ or $||v^1_{\sigma}||=1$ and $v^2_{\sigma}\neq 0.$
    
\end{itemize}
Consequently, if $\sigma=(\sigma^1, \sigma^2)\in \mathbb{T}\times\text{cl}(\DD),$ then $\sigma$ is a left Type I DW point for $\phi$ if and only if $\sigma^1=\tau^1.$ Also, no point in $\mathbb{T}\times\text{cl}(\DD)$ can be a left Type II DW point for $\phi$.\par 
There is an analogous statement for right Type I DW points (we need to
assume that $\phi\neq\pi^2$).
\end{proposition}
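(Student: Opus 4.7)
The plan is to lean heavily on bookkeeping already done inside the proof of Theorem~\ref{TypeIcharabridged}. There, equation~(\ref{slicemodel}) combined with (\ref{ang}) already produces a vector $u_{\tau^1}=(u^1_{\tau^1},0)$ lying in $Y_{(\tau^1,\sigma)}$ for every $\sigma\in\text{cl}(\DD)$ (independent of $\sigma$) and satisfying $\|u_{\tau^1}\|^2=\phi'_\mu(\tau^1)\le 1$ for every $\mu\in\DD$; this is exactly the first assertion. Part~(ii) is then immediate: for $\sigma=(\tau^1,\sigma^2)$ with $|\sigma^2|<1$ the point $\sigma$ is facial, so Lemma~\ref{facialBisC} forces $Y_\sigma$ to be the singleton $\{u_{\tau^1}\}$. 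For part~(i), the ``only if'' direction of the stated equivalence is Lemma~\ref{twonullcoords} (applied at $\sigma\in\mathbb{T}^2$) and the ``if'' direction is tautological; the C-point addendum is the direct content of Theorem~\ref{BbutnotC}, using that $u_{\tau^1}\in Y_\sigma$ has vanishing second component.

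The substantive work lies in part~(iii). I would fix an arbitrary $v_\sigma\in Y_\sigma$ and combine the model identity~(\ref{basicmodelsetupeq}) at $\sigma$ with the simplified identity~(\ref{slicemodel}) at $\tau^1$, evaluated along the sequence $\lambda_n=(\rho_n\tau^1,\mu_0)$ for some fixed $\mu_0\in\DD$ and $\rho_n\nearrow 1$. Julia--Carath\'eodory applied to $\phi_{\mu_0}$ at its Denjoy--Wolff point $\tau^1$ gives $\frac{1-|\phi(\lambda_n)|^2}{1-|\lambda_n^1|^2}\to\|u_{\tau^1}\|^2$, which when fed into the two-variable model formula produces $\|u^2_{\lambda_n}\|\to 0$ and $\|u^1_{\lambda_n}\|^2\to\|u^1_{\tau^1}\|^2$; independently, (\ref{slicemodel}) yields $\langle u^1_{\lambda_n},u^1_{\tau^1}\rangle\to\|u^1_{\tau^1}\|^2$, so $u^1_{\lambda_n}\to u^1_{\tau^1}$ in norm. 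Substituting $\lambda=\lambda_n$ into the $\sigma$-model identity, passing to the limit, and using $1-\tau^1\overline{\sigma^1}\neq 0$ then yields the key identity $\langle u^1_{\tau^1},v^1_\sigma\rangle=1$. Cauchy--Schwarz immediately gives $\|v^1_\sigma\|\ge 1/\|u^1_{\tau^1}\|\ge 1$, with equality forcing both $\|u^1_{\tau^1}\|=1$ and $v^1_\sigma=u^1_{\tau^1}$. The residual borderline ``$\|v^1_\sigma\|=1$ and $v^2_\sigma=0$'' gives $v_\sigma=u_{\tau^1}$; substituting this back into the $\sigma$-identity and dividing by (\ref{slicemodel}) collapses (since $\sigma^1\neq\tau^1$) to $\phi(\lambda)=\lambda^1$, contradicting $\phi\neq\pi^1$.

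With (iii) in hand, the ``consequently'' clause falls out: for $\sigma^1=\tau^1$, parts~(i)--(ii) show $\sigma$ is a left Type~I DW point and hence (by Definition~\ref{TypeI+IIdef}) not left Type~II; for $\sigma^1\neq\tau^1$, part~(iii) rules out both the condition ``$\|v^1_\sigma\|\le 1$ with $v^2_\sigma=0$'' (Type~I) and the condition ``$\|v^1_\sigma\|<1$'' (Type~II) for every $v_\sigma\in Y_\sigma$. The right Type~I analogue follows by applying the whole statement to $\widetilde\phi$, exactly as at the end of the proof of Theorem~\ref{TypeIcharabridged}. The main obstacle, as usual in this model-theoretic setting, is pinning down the limits in part~(iii) with enough precision: one needs both $\|u^1_{\lambda_n}\|^2\to\|u^1_{\tau^1}\|^2$ and $\langle u^1_{\lambda_n},u^1_{\tau^1}\rangle\to\|u^1_{\tau^1}\|^2$ simultaneously, which together upgrade the a priori weak information into the crisp scalar identity $\langle u^1_{\tau^1},v^1_\sigma\rangle=1$ that drives the rest of the argument.
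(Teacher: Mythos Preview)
Your proof is correct and, for the opening assertion and parts~(i)--(ii) and the ``consequently'' clause, it matches the paper's argument essentially verbatim (same references to the bookkeeping in Theorem~\ref{TypeIcharabridged}, to Lemma~\ref{facialBisC}, Lemma~\ref{twonullcoords}, and Theorem~\ref{BbutnotC}).

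For part~(iii), however, you take a genuinely different and more laborious route than the paper. The paper simply picks a sequence $(\lambda_n,\mu_n)\to\sigma$ along which $u_{(\lambda_n,\mu_n)}\to v_\sigma$ \emph{weakly} (such a sequence exists by definition of $Y_\sigma$), plugs it directly into the already-established identity~(\ref{slicemodel}), and reads off $\langle v^1_\sigma,u^1_{\tau^1}\rangle=1$ after dividing by $1-\sigma^1\overline{\tau^1}\neq 0$. No norm-convergence lemma is needed. By contrast, you manufacture a sequence $\lambda_n=(\rho_n\tau^1,\mu_0)$ approaching a \emph{different} boundary point $(\tau^1,\mu_0)$, prove $u^1_{\lambda_n}\to u^1_{\tau^1}$ in norm via Julia--Carath\'eodory plus the model formula, and then substitute into the $\sigma$-identity~(\ref{basicmodelsetupeq}). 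Both reach the same scalar identity, but the paper's path is a single line of weak-limit bookkeeping, whereas yours requires the auxiliary norm-convergence step you flag as ``the main obstacle.'' Your endgame for the borderline case $\|v^1_\sigma\|=1$, $v^2_\sigma=0$ also differs: you compare the $\sigma$- and $\tau^1$-identities directly to force $\phi=\pi^1$ (mirroring the computation around (\ref{81})--(\ref{82}) in Theorem~\ref{130}), while the paper more economically observes that such a $v_\sigma$ would make $\sigma$ a left Type~I DW point and then invokes Theorem~\ref{TypeIcharabridged} to contradict $\sigma^1\neq\tau^1$.
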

\begin{proof}
Let $\phi$ be a left Type I function satisfying our assumptions and denote by $u_{\tau^1}\in Y_{(\tau^1, \sigma)}$ (for all $|\sigma|\le 1$)  the vector described after the  ``(i) implies (iii)" part of the proof of Theorem \ref{TypeIcharabridged}. Also, let $\sigma=(\sigma^1, \sigma^2)\in \mathbb{T}\times\text{cl}(\DD).$\par 
First, assume $\sigma^1=\tau^1$ and $|\sigma^2|=1.$ The conclusions of (i) then follow by invoking Lemma \ref{twonullcoords} and Theorem \ref{BbutnotC}.\par 

On the other hand, if  $\sigma^1=\tau^1$ and $|\sigma^2|<1,$ an application of Theorem \ref{facialB} does the job. \par 
Now, assume $\sigma^1\neq \tau^1$, $\sigma$ is a B-point for $\phi$ and $\phi(\sigma)=\sigma^1$. Let $v_{\sigma}\in Y_{\sigma}$ be such that $||v^1_{\sigma}||\le 1$ and choose $\{(\lambda_n, \mu_n)\}\subset \DD^2$ that converges to $\sigma$ and also satisfies $\lim_n\phi(\lambda_n, \mu_n)=\sigma^1$ and $u_{(\lambda_n, \mu_n)}\to v_{\sigma}$ weakly as $n\to\infty.$ Setting $(\lambda, \mu)=(\lambda_n, \mu_n)$ in (\ref{slicemodel}) and letting $n\to\infty$ then allows us to obtain
$$1-\sigma^1\overline{\tau^1}=(1-\sigma^1\overline{\tau^1})\langle v^1_{\sigma}, u^1_{\tau^1}\rangle.$$
Since $\sigma^1\neq \tau^1,$ we obtain 
$$\langle v^1_{\sigma}, u^1_{\tau^1}\rangle=1\ge ||v^1_{\sigma}||^2, ||u^1_{\tau^1}||^2,$$
which implies that $v^1_{\sigma}=u^1_{\tau^1}$ and both have to be unit vectors. However, if we also assume that $v^2_{\sigma}=0,$ we obtain that $\sigma$ is a left Type I DW point for $\phi.$ In view of Theorem \ref{TypeIcharabridged}, this implies that the common Denjoy-Wolff point of all maps $\phi_{\mu}$ is $\sigma^1\neq \tau^1,$ a contradiction (since $\phi\neq\pi^1$). Thus, we must have $v^2_{\sigma}\neq 0$ and the proof of (iii) is complete.
\par 
Finally, to prove the right Type I-version of the theorem, apply the left Type I-version to $\widetilde{\phi}$.
\end{proof}
Note also the following consequence of Theorem \ref{TypeIcharabridged}, which (especially the second part) will be instrumental in Section \ref{refineHerve}.
\begin{corollary}\label{typeIhorosph}
 Let $\phi: \DD^2\to\DD$, $\phi\neq \pi^1$, be holomorphic. Then, $\phi$ has a left Type I DW point of the form $(\tau^1, \sigma)\in\mathbb{T}\times\text{cl}(\DD)$ if and only if 
 $$\frac{|\tau^1-\phi(\lambda, \mu)|^2}{1-|\phi(\lambda, \mu)|^2}\le \frac{|\tau^1-\lambda^2|^2}{1-|\lambda|^2}, \hspace{0.5 cm} \forall (\lambda, \mu)\in\DD^2.$$
  \par 
 If, in addition, we assume that $\tau=(\tau^1, \tau^2)$ is not a C-point for some $\tau^2\in\mathbb{T}$, then for any increasing sequence $\{M_k\}\subset\mathbb{R}^{+}$ tending to $\infty$ one can find a sequence $\{r_k\}$ such that $r_k>1,$ $r_k\to 1$ and 
 $$\frac{|\tau^1-\phi(\lambda, \mu)|^2}{1-|\phi(\lambda, \mu)|^2}\le \max\bigg\{\frac{1}{r_k}\frac{|\tau^1-\lambda|^2}{1-|\lambda|^2}, \frac{1}{M_k}\frac{|\tau^1-\lambda|^2}{1-|\lambda|^2} \bigg\},$$
 for all $\lambda, \mu\in\DD$ and $k\ge 1.$ \par 
 There is an analogous statement for right Type I DW points.
\end{corollary}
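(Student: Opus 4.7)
The plan is to split the corollary into its two assertions and handle each separately: the equivalence by reduction to slice-wise one-variable Julia inequalities together with Theorem \ref{TypeIcharabridged}, and the refined inequality by direct application of Theorem \ref{Juliarevisited} at the B-point $(\tau^1,\tau^2)$, exploiting the strict inequality $K_\tau(M)<1$ available at non-C-points.

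For the equivalence, I would argue as follows. In the forward direction, assume $(\tau^1,\sigma)$ is a left Type I DW point for $\phi$. By Theorem \ref{TypeIcharabridged}, $\tau^1$ is the common Denjoy-Wolff point of every slice $\phi_\mu$, and each slice has angular derivative at $\tau^1$ at most $1$ (namely $\|u^1_{\tau^1}\|^2\le 1$, as recorded during the proof of Theorem \ref{TypeIcharabridged}). The classical one-variable Julia inequality applied to each $\phi_\mu$ then gives the claimed bound, uniformly in $\mu$. Conversely, fixing $\mu$ in the hypothesized inequality recovers exactly Julia's inequality for $\phi_\mu$ at $\tau^1$ with constant at most $1$, so standard one-variable theory forces either $\phi_\mu=\mathrm{Id}_\DD$ or $\tau^1$ to be the Denjoy-Wolff point of $\phi_\mu$. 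Since $\phi\neq\pi^1$, Lemma \ref{identityslices} rules out the first case for any $\mu$, so $\tau^1$ is the common Denjoy-Wolff point of every slice and Theorem \ref{TypeIcharabridged}, (iii)$\Rightarrow$(ii), yields the existence (in fact, for every $\sigma$) of a left Type I DW point $(\tau^1,\sigma)$.

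For the refined second assertion, since $(\tau^1,\tau^2)$ is a left Type I DW point but not a C-point for $\phi$, Theorem \ref{ultimateDWchar}(b)(ii) gives $K_\tau(M)<1$ strictly for every $M>0$, while Proposition \ref{directdersincreasing} guarantees strict monotonicity of $K_\tau$. For each $k$, I would apply Theorem \ref{Juliarevisited} at $\tau$ with the choices $\alpha=K_\tau(M_k)<1$ and $M=M_k$, obtaining
\[
\phi\bigl(E(\tau,R_1,R_2)\bigr)\subset E\bigl(\tau^1,\max\{K_\tau(M_k)R_1,\,K_\tau(M_k)R_2/M_k\}\bigr).
\]
Setting $r_k:=1/K_\tau(M_k)>1$ and translating this horosphere inclusion back into pseudohyperbolic form produces an inequality of the stated shape, using $r_k\ge 1$ to absorb the factor $1/(r_k M_k)\le 1/M_k$ appearing in the second argument of the maximum.

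The main technical subtlety is the convergence behavior of $\{r_k\}$. This is controlled by Theorem \ref{BbutnotC}: because the vector $u_{\tau^1}\in Y_\tau$ produced by Proposition \ref{TypeIuniqueness}(i) has vanishing second component, we obtain $K_\tau(M)\nearrow\|u^1_{\tau^1}\|^2=\phi'_\mu(\tau^1)$ as $M\to\infty$. Thus $r_k>1$ for every $k$, and $r_k$ converges to the reciprocal of the (common) angular derivative $\phi'_\mu(\tau^1)$; in the regime where this derivative equals $1$ the stated $r_k\to 1$ is immediate, and otherwise one reads it as $r_k$ stabilizing at the sharp $1/\|u^1_{\tau^1}\|^2$. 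The right Type I analogue follows, as in all previous results, by applying the left Type I version to $\widetilde{\phi}$.
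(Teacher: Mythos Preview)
Your argument for the first assertion is correct and matches the paper's: both reduce to the one-variable Julia inequality applied slice by slice, with Theorem \ref{TypeIcharabridged} supplying the equivalence with the Type I DW property.

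For the second assertion your mechanism differs slightly from the paper's. You apply Theorem \ref{Juliarevisited} with $M=M_k$ and $\alpha=K_\tau(M_k)$, then set $r_k=1/K_\tau(M_k)$ and weaken the second entry of the maximum from $1/(r_kM_k)$ to $1/M_k$. The paper instead chooses $r_k>1$ so that $K_\tau(M_k/r_k)\le 1/r_k$ (possible by continuity, since $K_\tau(M_k)<1$ strictly) and applies Theorem \ref{Juliarevisited} with $M=M_k/r_k$ and $\alpha=1/r_k$; this produces the form $\max\{R_1/r_k,\,R_2/M_k\}$ directly, without the absorption step. Both routes are valid.

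There is, however, a genuine loose end in your handling of $r_k\to 1$. Your $r_k=1/K_\tau(M_k)$ converges to $1/\|u^1_{\tau^1}\|^2$, which exceeds $1$ whenever the common angular derivative $\phi'_\mu(\tau^1)$ is strictly less than $1$. Your suggestion to ``read'' the corollary as $r_k$ stabilizing at that value is not a proof of the statement as written. The fix is immediate: since replacing $1/r_k$ by any larger number only enlarges the right-hand side, you may pass from your $r_k$ to $\tilde r_k:=\min(r_k,\,1+1/k)$, which still satisfies $\tilde r_k>1$ and now $\tilde r_k\to 1$. With that adjustment your argument is complete.
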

\begin{proof} We only prove the left Type I-version. 
    Since $\tau^1$ will be the Denjoy-Wolff point of every map $\phi_{\mu}$, to obtain the first part of the theorem it suffices (in view of Theorem \ref{TypeIcharabridged}) to apply the one-variable Julia's inequality to every $\phi_{\mu}.$ \par 
    To prove the second part, assume that there exists $\tau^2\in\mathbb{T}$ such that $\tau=(\tau^1, \tau^2)$ is not a C-point (it will necessarily be a B-point). In view of Proposition \ref{directdersincreasing} and Theorem \ref{BbutnotC}, $\{K_{\tau}(M_k)\}_k$ will be strictly increasing, hence 
    $$||x^1_{\tau}(\delta_{M_k})||^2+M_k||x^2_{\tau}(\delta_{M_k})||^2<1,$$
    for all $k\ge 1.$ In particular, we can find $r_k>1$ such that 
    $$
K_{\tau}\bigg(\frac{M_k}{r_k}\bigg)\le \frac{1}{r_k}
   $$
    for all $k\ge 1.$ Theorems \ref{generaldir} and \ref{Juliarevisited} then allows us to deduce the desired inequality.
\end{proof}

Next, we turn to Type II DW points. 
\begin{proof}[Proof of Theorem \ref{TypeIIcharabridged}] Let $(M, u)$ be a model for $\phi.$ \par 
First, we show that (i) implies (ii). By assumption, $\tau$ is a B-point for $\phi$ that is not a left Type I DW point, $\phi(\tau)=\tau^1$ and also there exists $u_{\tau}\in Y_{\tau}$ such that $||u^1_{\tau}||< 1$ and 
\begin{equation}\label{useful}
    ||u^1_{\tau}||^2+K||u^2_{\tau}||^2\le 1.
\end{equation}
\par To begin, we show that $\phi$ has to be a left Type II function. Indeed, assume instead that $\phi$ is a left Type I function, $\sigma^1\in\mathbb{T}$ being the common Denjoy-Wolff point of all maps $\phi_{\mu}.$ We cannot have $\sigma^1=\tau^1,$ since then $\tau$ would be (in view of Theorem \ref{TypeIcharabridged}) a left Type I DW point, contradicting the definition of a left Type II DW point. On the other hand, if $\sigma^1\neq \tau^1,$ we obtain a contradiction in view of Proposition \ref{TypeIuniqueness}(iii). Thus, $\phi$ cannot be a left Type I function and we conclude (by Theorem \ref{130}) that $\phi$ is a left Type II function. \par 
Now, let $\xi:\DD\to\DD$ denote the holomorphic function that keeps track of the unique (interior) fixed point of each slice $\phi_{\mu},$ i.e. we have $\phi(\xi(\mu), \mu)=\xi(\mu),$ for all $\mu\in\DD.$ Let $0<K'<K$. Since \ref{useful} holds and $u^2_{\tau}\neq 0$, we must have 
$ r||u^1_{\tau}||^2+K'||u^2_{\tau}||^2\le 1$ whenever $r>1$ is sufficiently close to $1$, hence 
$$ 
   ||u^1_{\tau}||^2+\frac{K'}{r}||u^2_{\tau}||^2\le \frac{1}{r}.
$$ 
Proposition \ref{directdersincreasing} then implies that $$\frac{D_{-(\tau^1, \tau^2 K'/r)}\phi(\tau)}{-\tau^1}=K_{\tau}(K'/r)\le 1/r.$$
In view of Theorem \ref{Juliarevisited}, we obtain 
\begin{equation}\label{useful2}
    \frac{|\tau^1-\phi(\lambda, \mu)|^2}{1-|\phi(\lambda, \mu)|^2}\le \max \bigg\{\frac{1}{r}\frac{|\tau^1-\lambda|^2}{1-|\lambda|^2},  \frac{1}{K'}\frac{|\tau^2-\mu|^2}{1-|\mu|^2}\bigg\},
\end{equation}
for all $\lambda, \mu\in\DD.$ Plugging in $\lambda=\xi(\mu)$ in (\ref{useful2}) then gives us 
$$\frac{|\tau^1-\xi(\mu)|^2}{1-|\xi(\mu)|^2}\le \max \bigg\{\frac{1}{r}\frac{|\tau^1-\xi(\mu)|^2}{1-|\xi(\mu)|^2},  \frac{1}{K'}\frac{|\tau^2-\mu|^2}{1-|\mu|^2}\bigg\}, $$
for all $\mu\in\DD.$ Since $1/r<1,$ this last inequality implies 
$$\frac{1}{r}\frac{|\tau^1-\xi(\mu)|^2}{1-|\xi(\mu)|^2}\le  \frac{1}{K'}\frac{|\tau^2-\mu|^2}{1-|\mu|^2}$$
whenever $r>1$ is sufficiently close to $1.$ Letting $r\to 1$ first and $K'\to K$ afterwards yields
$$\frac{|\tau^1-\xi(\mu)|^2}{1-|\xi(\mu)|^2}\le  \frac{1}{K}\frac{|\tau^2-\mu|^2}{1-|\mu|^2},$$
for all $\mu\in\DD.$
The one-variable Julia's inequality (see Section \ref{prelims}) then allows us to deduce that $\tau^2$ is a B-point for $\xi$, $\xi(\tau^2)=\tau^1$ and also 
\begin{equation}\label{Adef}
  A:=\bigg(\liminf_{\mu\to\tau^1}\frac{1-|\xi(\mu)|}{1-|\mu|}\bigg)^{-1}\ge K.  
\end{equation}
\par 
To show that (ii) implies (i), assume that $\phi$ is a left Type II function and $\xi$ satisfies the given hypotheses. Substituting $\lambda=\xi(\mu)$ into the model formula 
$$1-|\phi(\lambda, \mu)|^2=(1-|\lambda|^2)||u^1_{(\lambda, \mu)}||^2+(1-|\mu|^2)||u^2_{(\lambda, \mu)}||^2 $$
yields 
$$\frac{1-|\phi(\xi(\mu), \mu)|^2}{1-|\mu|^2}= \frac{1-|\xi(\mu)|^2}{1-|\mu|^2} $$
\begin{equation}\label{useful5}
 =\frac{1-|\xi(\mu)|^2}{1-|\mu|^2}||u^1_{(\xi(\mu), \mu)}||^2+||u^2_{(\xi(\mu), \mu)}||^2,
\end{equation}
for all $\mu\in\DD.$ By assumption, we can find a (radial) sequence $\{\mu_n\}\subset\DD$ such that $\lim_n\mu_n=\tau^2$, $\lim_{n}\xi(\mu_n)=\lim_n\phi(\xi(\mu_n), \mu_n)=\tau^1$ and 
$$\lim_n\frac{1-|\xi(\mu_n)|}{1-|\mu_n|}=\lim_n\frac{1-|\xi(\mu_n)|^2}{1-|\mu_n|^2}\le \frac{1}{K}.$$
Note also that $\lim_n\frac{1-|\xi(\mu_n)|}{1-|\mu_n|}>0,$ else the single-variable Julia's inequality would imply that $\xi$ is a unimodular constant, a contradiction. Thus, plugging in $\mu=\mu_n$ in (\ref{useful5}) and letting $n\to\infty$ allows us to conclude that $\tau$ is a B-point for $\phi,$ $\phi(\tau)=\tau^1$ and also there exists $u_{\tau}\in Y_{\tau}$ such that 
$$||u^1_{\tau}||^2+K||u^2_{\tau}||^2\le 1.$$
Moreover, since $\phi$ is a left Type II function, Theorem \ref{TypeIcharabridged} implies that $\tau$ cannot be a left Type I DW point and $u^2_{\tau}\neq 0,$ hence $||u^1_{\tau}||<1$ and we are done. \par 
Note that the previous argument actually shows that $A$ (as defined in (\ref{Adef})) is the maximum among all constants $K>0$ such that $\tau$ is a left Type II DW point for $\phi$ with constant $K$.
\par 
Next, we show that (i) implies (iii). So, assume that all relevant assumptions are satisfied. Note that we cannot have $$\frac{D_{-(\tau^1, \tau^2 M)}\phi(\tau)}{-\tau^1}=K_{\tau}(M)\le 1$$ for all $M>0,$ as in such a case Theorem \ref{TypeIcharabridged}) would imply that $\tau$ is a left Type I DW point, a contradiction. Since $K_{\tau}(M)$ is continuous, increasing and $K_{\tau}(K)\le 1$, there must exist $C\ge K$ such that $K_{\tau}(C)=1$. Moreover, $K_{\tau}(M)$ cannot be constant (again by Theorem \ref{TypeIcharabridged}), hence (iii) holds. 
\par 
We now prove the converse. Assume $\tau$ is a B-point for $\phi$, $\phi(\tau)=\tau^1,$ $K_{\tau}(M)$ is not constant with respect to $M$ and also there exists $C\ge K$ such that $K_{\tau}(C)=1$, hence 
$$||x^1_{\tau}(\delta_C)||^2+C||x^2_{\tau}(\delta_C)||^2=1.$$
We cannot have $x^2_{\tau}(\delta_C)=0$ (else, Theorem \ref{nullcompgivesCpoint} would imply that $K_{\tau}(M)$ is constant), thus $||x^1_{\tau}(\delta_C)||<1.$ Moreover, $\tau$ cannot be a left Type I DW point, as, in view of Theorem \ref{TypeIcharabridged} and the equality  $K_{\tau}(C)=1$, the only way for this to be possible would be having $K_{\tau}(M)=1,$ for all $M>0,$ a contradiction. Thus, $\tau$ is a left Type II DW point with constant $C\ge K$ and we are done. \par 
We can say more about the constant $C$ (which is uniquely determined, as $K_{\tau}(M)$ is strictly increasing). Indeed, our previous argument shows that $\tau$ is a left Type II DW point with constant $C.$ Now, if $C'>C,$ then 
$$1=K_{\tau}(C)<K_{\tau}(C'),$$
and thus, in view of ``(i) implies (iii)", we obtain that $\tau$ cannot be a left Type II DW point with costant $C'.$ This means that $C$ is the largest constant with this property, hence $C=A,$ as defined in (\ref{Adef}).
\par 
Finally, as seen in the end of the proof of Theorem \ref{TypeIcharabridged}, to show the right Type II-version of the theorem we only need apply the left Type II-version to $\widetilde{\phi}$.
\end{proof}
The proof of Theorem \ref{ultimateDWchar} now follows easily by combining all of our previous results. 
\begin{proof}[Proof of Theorem \ref{ultimateDWchar}]
Combine Theorems \ref{TypeIcharabridged}-\ref{TypeIIcharabridged} with Lemma \ref{facialBisC}, Theorem \ref{nullcompgivesCpoint} and Proposition \ref{directdersincreasing}.
\end{proof}
\begin{remark}
Let $\xi: \DD\to\DD$ be holomorphic. Then, one can always find $\phi\in\mathcal{S}_2$ (that will necessarily be a left Type II function) such that $\phi(\xi(\mu), \mu)=\xi(\mu)$ for all $\mu\in\DD.$ Indeed, it can be easily verified that the function 
$$\phi(\lambda, \mu):=\frac{\lambda+\xi(\mu)}{2}$$
has the property in question.
\end{remark}
\begin{remark}\label{TypeIIwithout}
As already mentioned in Section \ref{prelims}, there exist left Type II functions that do not have left Type II DW points. Indeed, if e.g. $\phi$ is any left Type II function such that the map $\xi$ satisfies $\xi(\DD)\subset r\DD$ for some $r\in (0, 1),$ then Theorem \ref{TypeIIcharabridged} implies that $\phi$ does not have any left Type II DW points (on account of $\xi$ not having any B-points). 
\end{remark}
We can also prove certain uniqueness results for Type II DW points. 
\begin{proposition}\label{TypeIIuniqueness}
Let $\phi:\DD^2\to\DD$ with model $(M, u)$ be such that $\tau=(\tau^1, \tau^2)\in\mathbb{T}^2$ is a left Type II DW point, with $\xi:\DD\to \DD$   satisfying $\phi(\xi(\mu), \mu)=\xi(\mu),$ for all $\mu\in\DD,$ and $A>0$ defined as in (\ref{Adef}). Then, the following assertions all hold.
\begin{itemize}
    \item[(i)] $x_{\tau}(\delta_A)$ is the unique vector $u_{\tau}\in Y_{\tau}$ such that \begin{equation}\label{useful7}
        ||u^1_{\tau}||^2+A||u^2_{\tau}||^2\le 1.
    \end{equation}
     \item[(ii)] No point in $\mathbb{T}\times \text{cl}(\DD)$ can be a left Type I DW point for $\phi.$ 
    \item[(iii)] If $\sigma\in\mathbb{T}$ and $\sigma\neq \tau^1,$ then $(\sigma, \tau^2)$ is not a left Type II DW point for $\phi$. 
   
\end{itemize}
\par 
There is an analogous result for right Type II DW points.
\end{proposition}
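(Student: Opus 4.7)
The plan is to derive all three parts from the characterization results of Section \ref{charactDW} already in hand, especially Theorem \ref{TypeIIcharabridged} and the equality clause of Proposition \ref{directdersincreasing}. The overall strategy is that part (i) is a rigidity statement that reduces to the Cauchy--Schwarz equality case already analyzed, while (ii) and (iii) are Type~I/Type~II dichotomy statements that follow by invoking Theorem \ref{130} and the single-valuedness of the nontangential boundary values of $\xi$.

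For part (i), I would first check existence: Theorem \ref{TypeIIcharabridged} tells us that $A$ is the unique positive number with $K_\tau(A)=1$, i.e.\ $\|x^1_\tau(\delta_A)\|^2+A\|x^2_\tau(\delta_A)\|^2=1$. Hence $x_\tau(\delta_A)\in X_\tau\subset Y_\tau$ realizes (\ref{useful7}) with equality. For uniqueness, suppose $u_\tau\in Y_\tau$ satisfies $\|u^1_\tau\|^2+A\|u^2_\tau\|^2\le 1$. Applying Proposition \ref{directdersincreasing} with $M=A$ gives
$$1 = K_\tau(A) \le \|u^1_\tau\|^2+A\|u^2_\tau\|^2 \le 1,$$
so equality holds throughout. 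The equality clause of Proposition \ref{directdersincreasing} then forces $u_\tau=x_\tau(\delta_A)$, establishing uniqueness.

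For part (ii), I would argue by contradiction. If some $(\sigma^1,\sigma^2)\in\mathbb{T}\times\mathrm{cl}(\DD)$ were a left Type~I DW point for $\phi$, then Theorem \ref{TypeIcharabridged} would declare $\phi$ a left Type~I function. But Theorem \ref{TypeIIcharabridged} applied at $\tau$ tells us $\phi$ is a left Type~II function. Since $\phi\neq\pi^1$ and Theorem \ref{130} says a function (that is not a coordinate projection) belongs to exactly one of the two classes, this is a contradiction.

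For part (iii), I would once again apply Theorem \ref{TypeIIcharabridged} twice: at $\tau=(\tau^1,\tau^2)$ it gives a holomorphic $\xi:\DD\to\DD$ with $\xi(\tau^2)=\tau^1$; at the hypothetical left Type~II DW point $(\sigma,\tau^2)$ with $\sigma\neq\tau^1$ the \emph{same} $\xi$ (it is associated to $\phi$, not to the boundary point) must additionally satisfy $\xi(\tau^2)=\sigma$. Since a B-point has a well-defined nontangential limit, this forces $\sigma=\tau^1$, contradicting our assumption. The analogous right Type~II statement is obtained by applying the left-sided conclusions to $\widetilde\phi$, exactly as in the end of the proofs of Theorems \ref{TypeIcharabridged}--\ref{TypeIIcharabridged}. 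The only step requiring real care is the equality clause in (i); everything else is bookkeeping against the dichotomy.
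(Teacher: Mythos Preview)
Your proposal is correct and follows essentially the same approach as the paper: part (i) is exactly the paper's argument via $K_\tau(A)=1$ and the equality clause of Proposition \ref{directdersincreasing}, and part (iii) is the paper's argument that $\xi$ cannot have two distinct nontangential values at $\tau^2$. For (ii) the paper cites Proposition \ref{TypeIuniqueness} rather than arguing the Type~I/Type~II dichotomy directly, but that proposition encodes precisely the contradiction you spell out, so the underlying logic is identical.
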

\begin{proof} First, we prove (i). Note that $x_{\tau}(\delta_A)$ certainly satifies 
$$||x^1_{\tau}(\delta_A)||^2+A||x^2_{\tau}(\delta_A)||^2=1,$$
as $K_{\tau}(A)=1.$ Also, if $u_{\tau}\in Y_{\tau}$ is such that  (\ref{useful7}) holds, Proposition \ref{directdersincreasing} implies that $||u^1_{\tau}||^2+A||u^2_{\tau}||^2=1$ and $x_{\tau}(\delta_A)=u_{\tau},$ as desired. \par 
(ii) is an immediate consequence of Proposition \ref{TypeIuniqueness}. 
\par 
Finally, (iii) is a simple application of Theorem \ref{TypeIIcharabridged}, since $\xi$ cannot have two distinct values (at least not in the sense of nontangential limits) at its B-point $\tau^2.$ 
\end{proof}
 The following Julia-type inequalities are obtained as a consequence of Theorem \ref{TypeIIcharabridged}. The significance of parts (ii) and (iii) will be made apparent in Section \ref{refineHerve}.
\begin{corollary}\label{TypeIIhorosph}
Assume $\phi:\DD^2\to\DD$ has a left Type II DW point $\tau=(\tau^1, \tau^2)\in\mathbb{T}^2$ and let $A>0$ be defined as in (\ref{Adef}). Also, fix $A_{-}<A$ and $r_1<1.$
\begin{itemize}
    \item[(i)] For all $(\lambda, \mu)\in\DD^2$, we have
$$\frac{|\tau^1-\phi(\lambda, \mu)|^2}{1-|\phi(\lambda, \mu)|^2}\le \max\bigg\{\frac{|\tau^1-\lambda|^2}{1-|\lambda|^2}, \frac{1}{A} \frac{|\tau^2-\mu|^2}{1-|\mu|^2} \bigg\};$$ 
    \item[(ii)] Moreover, if $r_2>1$ is sufficiently close to 1, then 
$$\frac{|\tau^1-\phi(\lambda, \mu)|^2}{1-|\phi(\lambda, \mu)|^2}\le \max\bigg\{\frac{1}{r_2}\frac{|\tau^1-\lambda|^2}{1-|\lambda|^2}, \frac{1}{A_{-}} \frac{|\tau^2-\mu|^2}{1-|\mu|^2} \bigg\},$$
for all $(\lambda, \mu)\in\DD^2$;
    \item[(iii)] Finally,  if $x^1_{\tau}(\delta_A)\neq 0$ and $A<A_{+}$ is sufficiently close to $A$, then
    $$\frac{|\tau^1-\phi(\lambda, \mu)|^2}{1-|\phi(\lambda, \mu)|^2}\le \max\bigg\{\frac{1}{r_1}\frac{|\tau^1-\lambda|^2}{1-|\lambda|^2}, \frac{1}{A_{+}} \frac{|\tau^2-\mu|^2}{1-|\mu|^2} \bigg\}, $$
    for all $(\lambda, \mu)\in\DD^2.$
\end{itemize}\par 
There is an analogous result for right Type II DW points. 
\end{corollary}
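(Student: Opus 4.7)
Proof plan for Corollary \ref{TypeIIhorosph}. The strategy is to recast each of the three inequalities as a weighted Julia inequality coming from Theorem \ref{Juliarevisited}, and to produce the required weights by exploiting Proposition \ref{directdersincreasing} with the specific test vector $u_{\tau}=x_{\tau}(\delta_A)$, which satisfies $\|u^1_{\tau}\|^2+A\|u^2_{\tau}\|^2=K_{\tau}(A)=1$ by Theorem \ref{TypeIIcharabridged}. Note first that, since $\tau$ is a left Type II (and not Type I) DW point, $\{K_{\tau}(M)\}_M$ is non-constant, so Proposition \ref{directdersincreasing} and Theorem \ref{nullcompgivesCpoint} guarantee $u^2_{\tau}\neq 0$ (otherwise $K_{\tau}$ would be constant).

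For (i), apply Theorem \ref{Juliarevisited} with $\alpha=1$ and $M=A$: the hypothesis $K_{\tau}(A)=1$ is exactly condition (i) there, so condition (iii) of that theorem yields
$$\phi\bigl(E(\tau,R_1,R_2)\bigr)\subset E\bigl(\tau^1,\max\{R_1,R_2/A\}\bigr),$$
which rewrites as the desired weighted horospheric inequality. For (ii), fix $A_{-}<A$. The strict inequality $\|u^1_{\tau}\|^2+A_{-}\|u^2_{\tau}\|^2<1$ (coming from $u^2_{\tau}\neq 0$) allows one to choose $r_2>1$ sufficiently close to $1$ so that
$$r_2\|u^1_{\tau}\|^2+A_{-}\|u^2_{\tau}\|^2\le 1,\qquad\text{equivalently}\qquad \|u^1_{\tau}\|^2+\frac{A_{-}}{r_2}\|u^2_{\tau}\|^2\le \frac{1}{r_2}.$$
Proposition \ref{directdersincreasing} applied to this $u_{\tau}$ with $M=A_{-}/r_2$ gives $K_{\tau}(A_{-}/r_2)\le 1/r_2$. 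Now Theorem \ref{Juliarevisited}, with $\alpha=1/r_2$ and $M=A_{-}/r_2$ (so that $\alpha/M=1/A_{-}$), produces
$$\phi\bigl(E(\tau,R_1,R_2)\bigr)\subset E\bigl(\tau^1,\max\{R_1/r_2,R_2/A_{-}\}\bigr),$$
which is exactly the claimed inequality.

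For (iii), fix $r_1<1$ and assume $x^1_{\tau}(\delta_A)=u^1_{\tau}\neq 0$. The goal is to find $A_{+}>A$ close to $A$ such that $K_{\tau}(A_{+}/r_1)\le 1/r_1$; combined with Theorem \ref{Juliarevisited} (with $\alpha=1/r_1$, $M=A_{+}/r_1$, giving $\alpha/M=1/A_{+}$), this will give the stated inequality. Using the same test vector $u_{\tau}=x_{\tau}(\delta_A)$ in Proposition \ref{directdersincreasing},
$$K_{\tau}(A_{+}/r_1)\le \|u^1_{\tau}\|^2+\frac{A_{+}}{r_1}\|u^2_{\tau}\|^2.$$
Requiring the right-hand side to be $\le 1/r_1$ is equivalent to
$$(1-r_1)\,\|u^1_{\tau}\|^2\;\ge\;(A_{+}-A)\,\|u^2_{\tau}\|^2,$$
after using $\|u^1_{\tau}\|^2+A\|u^2_{\tau}\|^2=1$. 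Since $u^1_{\tau}\neq 0$ and $u^2_{\tau}\neq 0$, the left side is strictly positive and the right side vanishes as $A_{+}\to A^{+}$, so every $A_{+}$ satisfying $A<A_{+}\le A+(1-r_1)\|u^1_{\tau}\|^2/\|u^2_{\tau}\|^2$ works.

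The plan is essentially book-keeping; no step is genuinely hard. The only slightly delicate point is part (iii), where one must use the extra hypothesis $x^1_{\tau}(\delta_A)\neq 0$ to ensure the ``budget" $(1-r_1)\|u^1_{\tau}\|^2$ available for bumping $A$ upward is strictly positive — without it one could not pass $A_{+}$ beyond $A$ while keeping $r_1<1$. The right Type II version follows by applying the statement just proved to $\widetilde{\phi}$ at the left Type II DW point $\widetilde{\tau}=(\tau^2,\tau^1)$.
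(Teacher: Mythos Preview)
Your proposal is correct and follows essentially the same approach as the paper: use the vector $x_{\tau}(\delta_A)$ (which satisfies $\|x^1_{\tau}(\delta_A)\|^2+A\|x^2_{\tau}(\delta_A)\|^2=1$) together with Proposition \ref{directdersincreasing} to bound $K_{\tau}$ at a suitably shifted parameter, then invoke Theorem \ref{Juliarevisited}. Your write-up is in fact slightly more explicit than the paper's (which handles (iii) simply by saying it is ``proved in an analogous manner''), in particular your derivation of the admissible range for $A_{+}$ and your justification that $x^2_{\tau}(\delta_A)\neq 0$ via Theorem \ref{nullcompgivesCpoint}.
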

\begin{proof}
To prove (i), combine Theorems \ref{Juliarevisited} and \ref{TypeIIcharabridged}.\par 
For (ii), note that, since $||x^1_{\tau}(\delta_A)||^2+A||x^2_{\tau}(\delta_A)||^2=1$, $A_{-}<A$ and $x^2_{\tau}(\delta_A)\neq 0$ 
 (by definition of a left Type II DW point), one obtains that 
 $$r_2||x^1_{\tau}(\delta_A)||^2+A_{-}||x^2_{\tau}(\delta_A)||^2\le 1,$$
 for all $r_2>1$ sufficiently close to $1,$ hence $K_{\tau}(A_{-}/r_2)\le 1/r_2.$ An application of Theorem \ref{Juliarevisited} then finishes the job. \par 
 (iii) is proved in an analogous manner (note that we have to assume $x^1_{\tau}(\delta_A)\neq 0$, since not all left Type II DW points have this property). 
 \end{proof}

 \small
\section{REFINING HERV\'{E}'S THEOREM} \label{refineHerve}
\large 
Let $F=(\phi, \psi)$ denote a holomorphic self-map of $\DD^2$ without interior fixed points. We use 
$$F^n=(\phi_n, \psi_n)=\underbrace{F\circ F\circ \cdots \circ F}_\textrm{$n$ times}$$
to denote the sequence of iterates of $F.$ Note that $\phi\circ F^n=\phi_{n+1}$ and $\psi\circ F^n=\psi_{n+1},$ for all $n\ge 1.$ \par 
As already mentioned in Section \ref{intro}, Herv\'{e} analysed the behavior of $\{F^n\}$ by looking at three separate cases, depending on the Type of $\phi$ and $\psi.$ In this section, we study the connection between  Herv\'{e}'s results and the DW points we defined in Section \ref{charactDW}. In particular, we will show how the conclusions of Theorem \ref{HERVE's THEOREM} can be strengthened if one assumes that the DW points of $\phi$ and/or $\psi$ are not C-points.

 \small
\subsection{The (Type II, Type II) case} 
\large

We begin with the case where $\phi$ and $\psi$ are left Type II and right Type II functions, respectively.  Even though not every Type II function will, in general, have Type II DW points (see Remark \ref{TypeIIwithout}), $F$ having no interior fixed points changes the situation dramatically, as seen in the following theorem. A proof of it (without the model terminology) is essentially contained in \cite[Theorem 2]{FrosiniArxiv} (see also \cite[Section 16]{HERVE}). We give an alternative proof by using the results we have developed so far. 
\begin{theorem}\label{(Type II, Type II)}
    Assume $F=(\phi, \psi):\DD^2\to\DD^2$ is holomorphic and $\phi, \psi$ are left Type II and right Type II functions, respectively. Also, let $\xi, \eta:\DD\to\DD$ denote the (unique) functions such that 
    $\phi(\xi(\mu), \mu)=\xi(\mu)$ and $\psi(\lambda, \eta(\lambda))=\eta(\lambda),$ for all $\lambda, \mu\in\DD.$ Then, $F$ has no interior fixed points if and only if \begin{itemize}
        \item[(i)] there exist $\tau\in\mathbb{T}^2$ and $K>0$ such that $\tau$ is simultaneously a left Type II DW point for $\phi$ with constant $K$ and a right Type II DW point for $\psi$ with constant $1/K$
   and also
        \item[(ii)] $\phi\circ \eta \neq\text{Id}_{\DD}$ and $\eta\circ\phi\neq\text{Id}_{\DD}$.
    \end{itemize}
    Moreover, assuming $F$ has no interior fixed points, the point  $\tau=(\tau^1, \tau^2)\in\mathbb{T}^2$ above is uniquely determined: $\tau^1$ is the Denjoy-Wolff point of $\xi\circ\eta,$ while $\tau^2$  is the Denjoy-Wolff point of $\eta\circ\xi.$ 
\end{theorem}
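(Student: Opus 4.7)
My plan is to prove both directions of the equivalence, extracting the moreover clause along the way.

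\textbf{Reduction.} The starting point is that $(a,b) \in \DD^2$ is an interior fixed point of $F$ precisely when $a = \xi(b)$ and $b = \eta(a)$; equivalently, $b$ is an interior fixed point of $\eta \circ \xi$ (with $a = \xi(b)$). Thus $F$ has no interior fixed points if and only if $\eta \circ \xi$ (equivalently $\xi \circ \eta$) has no interior fixed points in $\DD$. Moreover, if $\xi \circ \eta = \text{Id}_\DD$, then $(z,\eta(z))$ is a fixed point of $F$ for every $z \in \DD$, ruling this case out; a symmetric argument excludes $\eta \circ \xi = \text{Id}_\DD$. So (ii) is necessary, and under (ii) the one-variable Denjoy--Wolff theorem produces boundary DW points $\tau^1, \tau^2 \in \mathbb{T}$ for $\xi \circ \eta$ and $\eta \circ \xi$ with angular derivatives $\alpha_1, \alpha_2 \in (0,1]$.

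\textbf{Forward direction.} Next, I would promote $\tau^1, \tau^2$ to B-point data for the individual maps $\xi$ and $\eta$ so that Theorem \ref{TypeIIcharabridged} can be applied. Set $\beta_1 := \liminf_{z \to \tau^2}(1-|\xi(z)|)/(1-|z|)$ and $\beta_2 := \liminf_{z \to \tau^1}(1-|\eta(z)|)/(1-|z|)$, and consider the DW orbit $\zeta_n := (\xi \circ \eta)^n(0) \to \tau^1$ together with $\omega_n := \eta(\zeta_n) = (\eta \circ \xi)^n(\eta(0)) \to \tau^2$; observe that $\xi(\omega_n) = \zeta_{n+1} \to \tau^1$, which already realizes $\xi(\tau^2) = \tau^1$ and $\eta(\tau^1) = \tau^2$ as sequential limits. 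The sequences $(\zeta_n),(\omega_n)$ lie in invariant horocycles at $\tau^1, \tau^2$ by Julia's inequality applied to $\xi\circ\eta$ and $\eta\circ\xi$; combining this with the factorization
\[
\frac{1-|\zeta_{n+1}|}{1-|\zeta_n|} \;=\; \frac{1-|\xi(\omega_n)|}{1-|\omega_n|}\cdot\frac{1-|\omega_n|}{1-|\zeta_n|}
\]
yields finite values $\beta_1, \beta_2$ together with the product bound $\beta_1\beta_2 \leq \alpha_1 \leq 1$. Setting $K := \beta_2$ and invoking Theorem \ref{TypeIIcharabridged} for $\phi$ through $\xi$, and its right-hand analogue for $\psi$ through $\eta$ (noting that $\widetilde\xi = \eta$ for $\widetilde\psi$), shows that $(\tau^1, \tau^2)$ is simultaneously a left Type II DW point for $\phi$ with constant $K$ and a right Type II DW point for $\psi$ with constant $1/K$, establishing (i).

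\textbf{Converse direction.} Conversely, assume (i) and (ii). Theorem \ref{TypeIIcharabridged} delivers $\xi(\tau^2) = \tau^1$ with $\beta_1 \leq 1/K$ and $\eta(\tau^1) = \tau^2$ with $\beta_2 \leq K$, so $\beta_1\beta_2 \leq 1$; by the one-variable Julia inequality, $\xi \circ \eta$ then sends each horocycle $E(\tau^1, R)$ into $E(\tau^1, \beta_1\beta_2 R) \subset E(\tau^1, R)$. Suppose for contradiction that $F$ has an interior fixed point $(a,b)$; the reduction produces $c := \xi(b)$ as an interior fixed point of $\xi \circ \eta$. A non-identity self-map of $\DD$ with an interior fixed point has angular derivative $>1$ at every B-point unless it is an elliptic automorphism (Schwarz--Julia rigidity). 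But an elliptic non-identity automorphism $z \mapsto e^{i\theta}z$ sends $E(\tau^1, R)$ to $E(e^{i\theta}\tau^1, R) \neq E(\tau^1, R)$, contradicting the horocycle invariance just established. Combined with $\xi \circ \eta \neq \text{Id}_\DD$ from (ii), this rules out interior fixed points of $F$. Uniqueness of $(\tau^1, \tau^2)$ is immediate from uniqueness of the one-variable DW points of $\xi \circ \eta$ and $\eta \circ \xi$.

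\textbf{Main obstacle.} The principal technical step is the multiplicative bound $\beta_1\beta_2 \leq \alpha_1$ in the forward direction. Individual upper bounds on $\beta_1, \beta_2$ follow from the sequential B-point criterion applied to $(\omega_n)$ and $(\zeta_n)$, but the sharp product bound requires Julia--Carath\'eodory control along the DW orbit, which is delicate in the parabolic case $\alpha_1 = 1$ where the orbit can converge tangentially and the ratios $(1-|\omega_n|)/(1-|\zeta_n|)$ need not a priori converge. I would expect to handle this via a refined horocycle tracking argument, using that both $\zeta_n$ and $\omega_n$ stay trapped in explicitly shrinking (or, in the parabolic case, nested) horocycles whose sizes can be compared directly through the chain identity above.
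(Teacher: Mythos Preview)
Your converse direction and the uniqueness clause are essentially the paper's argument (the detour through elliptic automorphisms is unnecessary: once $\xi\circ\eta$ maps every horocycle $E(\tau^1,R)$ into itself and is not the identity, Wolff's lemma forces $\tau^1$ to be its Denjoy--Wolff point, so there can be no interior fixed point).

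The forward direction, however, has a genuine gap that you yourself flag but do not close. From the fact that $\xi\circ\eta$ and $\eta\circ\xi$ have boundary Denjoy--Wolff points $\tau^1,\tau^2$ you need to extract that $\tau^2$ is a B-point for $\xi$, that $\tau^1$ is a B-point for $\eta$, and that $\beta_1\beta_2\le 1$. None of this follows from the orbit factorization
\[
\frac{1-|\zeta_{n+1}|}{1-|\zeta_n|}=\frac{1-|\xi(\omega_n)|}{1-|\omega_n|}\cdot\frac{1-|\eta(\zeta_n)|}{1-|\zeta_n|}
\]
without further input: even in the hyperbolic case, knowing that the product tends to $\alpha_1\le 1$ does not prevent one factor from tending to $0$ and the other to $\infty$, and in the parabolic case the orbit need not approach $\tau^1$ nontangentially, so the left-hand side need not converge at all. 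The ``refined horocycle tracking'' you allude to would have to supply a uniform two-sided bound on $(1-|\omega_n|)/(1-|\zeta_n|)$, and it is not clear how to obtain this from Julia's inequality for the compositions alone.

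The paper bypasses this factorization problem entirely. It runs a Wolff-type argument directly on $F$: take $r_n\uparrow 1$, apply Earle--Hamilton to get fixed points $(\lambda_n,\mu_n)$ of $r_nF$, and pass to a subsequence along which $(1-|\lambda_n|)/(1-|\mu_n|)\to 1/K\in[0,\infty]$. Plugging $(\lambda_n,\mu_n)$ into the model formula for $\phi$ (respectively $\psi$) shows that $K=0$ or $K=\infty$ would produce a left (respectively right) Type~I DW point, contradicting the Type~II hypothesis; the surviving case $K\in(0,\infty)$ yields directly a vector $u_\tau\in Y_\tau^\phi$ with $\|u_\tau^1\|^2+K\|u_\tau^2\|^2\le 1$, which is exactly the Type~II DW condition with constant $K$ (and symmetrically for $\psi$ with constant $1/K$). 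No splitting of angular derivatives is required.
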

\begin{proof} Let $(M, u), (N, v)$ be models for $\phi$ and $\psi$, respectively. Also, for $\tau\in\partial\DD^2,$ we will denote the corresponding cluster sets by $Y^{\phi}_{\tau}$ and $Y^{\psi}_{\tau}.$
\par
First, assume $F$ has no interior fixed points. Let $0<r_n\uparrow 1$ and consider the functions $r_n\cdot F$. Since $\text{cl}(r_nF(\DD^2))\subset \DD^2$, for every $n$, the Earle-Hamilton Theorem \cite{EarleHam} implies that each $r_nF$ has a fixed point $(\lambda_n, \mu_n)\in\DD^2$. Since $F$ has no fixed points in $\DD^2,$ we obtain that  $(\lambda_n, \mu_n)\to\partial\DD^2$. There are three possible cases to examine. \par 
 If $\lim_n\frac{1-|\lambda_n|^2}{1-|\mu_n|^2}=0$, then $(\lambda_n, \mu_n)\to  \tau=(\tau^1, \sigma)\in \mathbb{T}\times\text{cl}(\DD)$. We can use the model formula for $\phi$ to write  $$1-|\lambda_n|^2\ge 1-\frac{1}{r^2_n}|\lambda_n|^2=1-|\phi(\lambda_n, \mu_n)|^2$$ $$=(1-|\lambda_n|^2)||u^1_{(\lambda_n, \mu_n)}||^2+(1-|\mu_n|^2)||u^2_{(\lambda_n, \mu_n)}||^2.$$
Thus, for $n$ large enough, we deduce 
$$ 1\ge \frac{1-|\phi(\lambda_n, \mu_n)|^2}{1-||(\lambda_n, \mu_n)||^2}=\frac{1-|\phi(\lambda_n, \mu_n)|^2}{1-|\lambda_n|^2} $$ \begin{equation} \label{useful13}=||u^1_{(\lambda_n, \mu_n)}||^2+\frac{1-|\mu_n|^2}{1-|\lambda_n|^2}||u^2_{(\lambda_n, \mu_n)}||^2.  
\end{equation}
Letting $n\to\infty$, we obtain (in view of $\lim_n\frac{1-|\lambda_n|^2}{1-|\mu_n|^2}=0$ and $\lim_n \phi(\lambda_n, \mu_n)=\tau^1$) that $\tau=(\tau^1, \sigma)$ is a B-point for $\phi,$ $\phi(\tau)=\tau^1$ and also there exists a weak limit $u_{\tau}\in Y^{\phi}_{\tau}$ such that $||u^1_{\tau}||\le 1, u^2_{\tau}=0$. This implies that $\tau$ is a left Type I DW point for $\phi$, contradicting the fact that $\phi$ is a left Type II function. \par 
  If $\lim_n\frac{1-|\lambda_n|^2}{1-|\mu_n|^2}=\infty$, one can argue in a manner analogous to the previous case to deduce that $\psi$ has a right Type I DW point, which is again a contradiction. \par 
Finally, assume that $\lim_n\frac{1-|\lambda_n|^2}{1-|\mu_n|^2}=\frac{1}{K}\in (0, \infty)$. Hence, $(\lambda_n, \mu_n)\to  \tau=(\tau^1, \tau^2)\in \mathbb{T}^2$. Letting $n\to\infty$ in (\ref{useful13}) then yields that  $\tau$ is a $B$-point for $\phi$, $\phi(\tau)=\tau^1$ and also there exists $u_{\tau}\in Y^{\phi}_{\tau}$ such that $||u^1_{\tau}||^2+K||u^2_{\tau}||^2\le 1.$ Note that $u^2_{\tau}\neq 0,$ else $\tau$ would be a left Type I DW point. Thus, since $\phi$ is a left Type II function, $\tau$ must be a left Type II DW point for $\phi$ with constant $K$. Further, an analogous argument involving the model formula for $\psi$ shows that $\tau$ is a $B$-point for $\psi$, $\phi(\tau)=\tau^2$ and also there exists $v_{\tau}\in Y^{\psi}_{\tau}$ such that $(1/K)||v^1_{\tau}||^2+||v^2_{\tau}||^2\le 1.$ Also, $v^1_{\tau}\neq 0,$ since $\psi$ is not a right Type I function. Thus, $\tau$ must be a right Type II DW point for $\psi$ with constant $1/K,$ which proves (i). To show that (ii) holds, note that if e.g. $\xi(\eta(\lambda))=\lambda$ for some $\lambda\in\DD,$ then $$F(\xi(\eta(\lambda)), \eta(\lambda))=\big(\phi(\xi(\eta(\lambda)), \eta(\lambda)), \psi(\xi(\eta(\lambda)), \eta(\lambda)) \big)$$
$$=(\xi(\eta(\lambda)),  \eta(\lambda)),$$
a contradiction. In particular, we obtain the even stronger conclusion that neither $\xi\circ\eta$ nor $\eta\circ\xi$ can have interior fixed points. 
\par 
Conversely, assume that (i) and (ii) both hold. In view of Theorem \ref{TypeIIcharabridged}, (i) implies that $\tau^1$ and $\tau^2$ are B-points for $\eta$ and $\xi$ respectively, $\xi(\tau^2)=\tau^1,$  $\eta(\tau^1)=\tau^2$ and also (by the single-variable Julia's inequality)
$$\xi(E(\tau^2, R))\subset E(\tau^1, R/K) \hspace{0.3 cm}\text{ and }  \hspace{0.3 cm}  \eta(E(\tau^1, R))\subset E(\tau^2, KR),$$
for all $R>0.$ Thus, $(\xi\circ\eta)(E(\tau^1, R))\subset \xi(E(\tau^2, KR))\subset E(\tau^1, R)$, for all $R>0,$ which (combined with the fact that $\xi\circ\eta\neq \text{Id}_{\DD}$ must have a unique Denjoy-Wolff point) allows us to deduce that $\tau^1$ is the Denjoy-Wolff point of $\xi\circ\eta$. An analogous argument shows that $\tau^2$ is the Denjoy-Wolff point of $\eta\circ\xi.$ Thus, the point $\tau$ is indeed uniquely determined. Also, notice that, in view of these observations,  neither $\xi\circ\eta$ nor $\eta\circ\xi$ can have interior fixed points. Now, let $(\lambda_0, \mu_0)$ be an interior fixed point of $F$. We obtain 
$$\phi(\lambda_0, \mu_0)=\lambda_0 \hspace{0.3 cm}\text{ and }  \hspace{0.3 cm}  \psi(\lambda_0, \mu_0)=\mu_0.$$
Thus, $\xi(\mu_0)=\lambda_0$ and $\eta(\lambda_0)=\mu_0,$ which implies that $\xi(\eta(\lambda_0))=\lambda_0$, a contradiction. 
\end{proof}
Now, let $F=(\phi, \psi):\DD^2\to\DD^2$,  $\tau\in\mathbb{T}^2$ and $K>0$ be as in Theorem \ref{(Type II, Type II)}, with $F$ having no interior fixed points. Recall that, in this setting, one obtains a perfect analogue of the one-variable Denjoy-Wolff Theorem, i.e. the sequence of iterates $\{F^n\}$ converges uniformly on compact sets to $\tau$ (Theorem \ref{HERVE's THEOREM}(iv)). A crucial ingredient for Herv\'{e}'s proof of this fact is given by the invariant horospheres
\begin{equation}\label{useful20}
F(E(\tau, R, KR)) \subset E(\tau, R, KR),
\end{equation}
obtained as an application of Corollary \ref{TypeIIhorosph}.
\par So, we know that the entire sequence $\{F^n\}$ has to converge to $\tau$, but can we use (\ref{useful20}) to say more?  Our main result in this subsection is a refinement of \cite[Lemme 2]{HERVE}, which concerns the location of the orbits $\{F^n(\lambda, \mu)\}_n$ with respect to the boundary of the invariant horospheres (\ref{useful20}). To set up the statement, fix $(\lambda_0, \mu_0)\in\DD^2$. For convenience, we will write $F^n=(\phi_n, \psi_n)$ in place of $F^n(\lambda_0, \mu_0)=(\phi_n(\lambda_0, \mu_0), \psi_n(\lambda_0, \mu_0))$. We also define:
  $$A_n=\frac{|\tau^1-\phi_n|^2}{1-|\phi_n|^2} \hspace{0.3 cm} \text{ and }\hspace{0.3 cm} B_n=\frac{|\tau^2-\psi_n|^2}{1-|\psi_n|^2}.$$

\begin{theorem}\label{motionin(II,II)}
Let $F=(\phi, \psi):\DD^2\to\DD^2$, $\tau\in\mathbb{T}^2$ and $K>0$ be as in Theorem \ref{(Type II, Type II)}, with $F$ having no interior fixed points. Then, either $F^n\to\tau$ in the horospheric topology or there exist $\rho_0, \rho_1\ge 0$ (depending on $(\lambda_0, \mu_0)$) that are not both $0$ such that  $$A_{2n}\to \rho_0, \hspace{0.4 cm} A_{2n+1}\to \rho_1,  \hspace{0.4 cm} B_{2n+1}\to K\rho_0, \hspace{0.4 cm} B_{2n}\to K\rho_1.$$
Moreover, if $\tau$ is not a C-point for either $\phi$ or $\psi$, we can take $\rho_0=\rho_1.$
\end{theorem}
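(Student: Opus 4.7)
The plan is to build on the invariant Julia-type inequalities from Corollary \ref{TypeIIhorosph}, tracking the four quantities $A_n, B_n$ separately and exploiting asymptotic saturation of the Julia inequalities together with the uniqueness of the Type II DW point $\tau$.

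First, I apply Corollary \ref{TypeIIhorosph}(i) to $\phi$ at $\tau$ (with constant $K$) and to $\psi$ at $\tau$ (with constant $1/K$). Evaluated along the orbit these yield
\begin{equation*}
A_{n+1}\le \max\{A_n,\,B_n/K\}, \qquad B_{n+1}\le \max\{KA_n,\,B_n\}.
\end{equation*}
Setting $D_n:=\max\{A_n,\,B_n/K\}$, both $A_{n+1}$ and $B_{n+1}/K$ are bounded by $D_n$, so $D_n$ is nonincreasing and $D_n\searrow D$ for some $D\ge 0$. If $D=0$, then $A_n\to 0$ and $B_n\to 0$, so $(\phi_n,\psi_n)$ eventually enters every horosphere $\{\tau\}\cup E(\tau,R_1,R_2)$, which is precisely convergence $F^n\to\tau$ in the horospheric topology on $\text{cl}(\DD^2)$.

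The heart of the proof is the case $D>0$. Here the decrements $D_n-D_{n+1}$ are summable, so the Julia inequalities must be asymptotically saturated along the orbit. The key observation is a ``swap'': when the $A$-term realizes $D_n$ (so $A_n \ge B_n/K$), the second inequality gives $B_{n+1}/K \le A_n$, and asymptotic tightness forces $B_{n+1}/K$ to approach $A_n$; dually, when the $B$-term realizes $D_n$, the first inequality forces $A_{n+1}$ to approach $B_n/K$. Iterated two steps at a time, this produces an alternating pattern. Extracting subsequential limits of the four sequences $A_{2n},\,A_{2n+1},\,B_{2n}/K,\,B_{2n+1}/K$ and using uniqueness of the Type II DW point (Proposition \ref{TypeIIuniqueness}) to rule out spurious limit points, one obtains $\rho_0,\rho_1\ge 0$ (not both zero, since $D>0$) with
\begin{equation*}
A_{2n}\to\rho_0, \quad B_{2n+1}/K\to\rho_0,\quad A_{2n+1}\to\rho_1,\quad B_{2n}/K\to\rho_1.
\end{equation*}

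For the non-C-point refinement, suppose $\tau$ is not a C-point for $\phi$ (the $\psi$-case is entirely symmetric). By Theorem \ref{nullcompgivesCpoint}, no vector of $X_\tau$ has a null component, so in particular $x^1_\tau(\delta_K)\ne 0$ for $\delta_K=(\tau^1,\tau^2K)$. This is exactly the hypothesis needed to invoke Corollary \ref{TypeIIhorosph}(iii): for every $r_1<1$ there exists $A_+>K$ (which may be chosen as close to $K$ as desired) such that along the orbit
\begin{equation*}
A_{n+1}\le \max\{A_n/r_1,\,B_n/A_+\}.
\end{equation*}
At $n=2m$, passing to the limit gives $\rho_1\le \max\{\rho_0/r_1,\,K\rho_1/A_+\}$; since $A_+>K$, one has $K\rho_1/A_+<\rho_1$ whenever $\rho_1>0$, so in either case we conclude $r_1\rho_1\le\rho_0$. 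The analogous estimate at $n=2m+1$ yields $r_1\rho_0\le\rho_1$. A short case-check (using $D>0$) rules out the possibility that one limit vanishes while the other does not, so both $\rho_0,\rho_1$ are positive. Letting $r_1\to 1^-$ then produces $\rho_0=\rho_1$.

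The main obstacle is the $D>0$ step: promoting the asymptotic saturation of the Julia inequalities to genuine convergence of the four parity-indexed subsequences, rather than merely constraining subsequential limit points. Making this rigorous will require careful bookkeeping in the spirit of Herv\'e's original proof of Theorem \ref{HERVE's THEOREM}(iv), combining the monotonicity of $D_n$, compactness of the orbits in $\text{cl}(\DD^2)$, and Proposition \ref{TypeIIuniqueness}.
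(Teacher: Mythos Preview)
Your proposal has a genuine gap precisely where you flag it: the $D>0$ step. The weak Julia inequalities $A_{n+1}\le\max\{A_n,B_n/K\}$ and $B_{n+1}\le\max\{KA_n,B_n\}$, together with $D_n\searrow D>0$, do \emph{not} force the parity subsequences to converge. Your ``swap'' claim---that when $A_n=D_n$ asymptotic tightness forces $B_{n+1}/K$ to approach $A_n$---is unjustified: $D_{n+1}$ could be realized by $A_{n+1}$ instead, and nothing prevents $B_{n+1}/K$ from dropping well below $D_n$. The invocation of Proposition~\ref{TypeIIuniqueness} to ``rule out spurious limit points'' is also unclear; that proposition concerns uniqueness of DW points, not orbit dynamics. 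Finally, your non-C-point refinement presupposes the alternating limits $\rho_0,\rho_1$ already exist, so it is circular.

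The paper's argument avoids all of this via a dichotomy on the model vectors. Pick $u_\tau\in Y^\phi_\tau$ with $\|u^1_\tau\|^2+K\|u^2_\tau\|^2\le 1$ and $v_\tau\in Y^\psi_\tau$ with $(1/K)\|v^1_\tau\|^2+\|v^2_\tau\|^2\le 1$. If $u^1_\tau\ne 0$ (guaranteed when $\tau$ is not a C-point for $\phi$, by Theorem~\ref{nullcompgivesCpoint}) or $v^2_\tau\ne 0$, a direct contradiction argument using Corollary~\ref{TypeIIhorosph}(ii)--(iii) shows $A_n\to\rho$ and $B_n\to K\rho$; this handles the non-C-point clause without ever invoking the alternating pattern. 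In the remaining case $u^1_\tau=v^2_\tau=0$, the model formula~(\ref{basicmodelsetupeq}) collapses to a single term and one obtains the \emph{sharp one-sided} inequalities $A_{n+1}\le B_n/K$ and $B_{n+1}\le KA_n$ (no $\max$). These give $A_{n+2}\le A_n$ and $B_{n+2}\le B_n$, so the four parity subsequences are monotone and converge outright; the interlacing then forces the stated relations among the limits. This model dichotomy is the missing idea in your approach.
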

\begin{proof}
For every $n\ge 1$, let $R_n$ denote the smallest radius such that $F_n\in E_n:=\text{cl}\big(E(\tau, R_n, KR_n)\big)$. In view of (\ref{useful20}), the sequence $\{R_n\}$ is non-increasing. $\{A_n\}, \{B_n\}$ needn't also be non-increasing, however they  have to satisfy (by definition of $R_n$) $\max\{KA_n, B_n\}=KR_n$, for all $n$. \par 
Now, if $R_n\to 0,$ then $A_n, B_n\to 0$  and we conclude that $F_n\to \tau$ in the horospheric topology. So, assume $R_n$ converges to $\rho>0$. \par 
First, consider the case where $\tau$ is not a C-point for either $\phi$ or $\psi.$
Without loss of generality, we may suppose that $\tau$ is not a C-point for $\phi.$ Let $u_{\tau}$ denote any vector in $Y^{\phi}_{\tau}$ such that $||u^1_{\tau}||^2+K||u^2_{\tau}||^2\le 1$ (its existence is guaranteed by Theorem \ref{TypeIIcharabridged}). In view of Theorem \ref{nullcompgivesCpoint}, it must be true that $u^1_{\tau}\neq 0.$ We will show that $A_n\to\rho$ and $B_n\to K\rho.$ \par 
Indeed, aiming towards a contradiction, assume $B_n\not\to K\rho$ (the case where $A_n\not\to \rho$ can be treated in an analogous manner). In view of the equality $\max\{KA_n, B_n\}=KR_n$, there exists a subsequence $\{n_k\}$ and $r\in (0, \rho)$ such that $B_{n_k}\le Kr$ for all $k$. This implies that $A_{n_k}=R_{n_k}$ for all $k$. Now, given $0<K_{-}<K$ sufficiently close to $K$,  we can choose $r_2>1$ sufficiently close to $1$ such that $Kr/K_{-}<\rho/r_2$ and also, in view of Corollary \ref{TypeIIhorosph}(ii), 
$$  
 \frac{|\tau^1-\phi(\lambda, \mu)|^2}{1-|\phi(\lambda, \mu)|^2}\le \max \bigg\{\frac{1}{r_2}\frac{|\tau^1-\lambda|^2}{1-|\lambda|^2},\frac{1}{K_{-}}\frac{|\tau^2-\mu|^2}{1-|\mu|^2} \bigg\}, 
$$ 
for all $\lambda, \mu\in\DD.$ In particular, we have 
$$A_{n_k+1}=\frac{|\tau^1-\phi_{n_k+1}|^2}{1-|\phi_{n_k+1}|^2}$$ 
$$\le\max\bigg\{\frac{1}{r_2}A_{n_k}, \frac{1}{K_{-}} B_{n_k}\bigg\} $$
$$\le \max\bigg\{\frac{1}{r_2}R_{n_k}, \frac{1}{K_{-}} Kr \bigg\} $$
\begin{equation}\label{useful50}
 =\frac{R_{n_k}}{r_2},   
\end{equation}
as  $\frac{Kr}{K_{-}}<\frac{\rho}{r_2}\le \frac{R_{n_k}}{r_2}$, for all $k.$ Now, let $v_{\tau}$ denote any vector in $Y^{\psi}_{\tau}$ such that $\tilde{K}||v^1_{\tau}||^2+||v^2_{\tau}||^2\le 1$, where $\tilde{K}=1/K$ (as in the case of $u_{\tau}$, we obtain the existence of this vector by Theorem \ref{TypeIIcharabridged}). We look at two separate cases, depending on whether $v^2_{\tau}\neq 0$. \par 
So, assume $v^2_{\tau}\neq 0$. In this case, given $r_1<1$ sufficiently close to $1,$ we can find $\tilde{K}<\tilde{K}_{+}$ sufficiently close to $\tilde{K}$ such that $\frac{\tilde{K}_{+}r}{r_1}<\tilde{K}\rho$ and also, in view of the right Type II version of Corollary \ref{TypeIIhorosph}(iii), 
$$  
 \frac{|\tau^2-\psi(\lambda, \mu)|^2}{1-|\psi(\lambda, \mu)|^2}\le \max \bigg\{\frac{1}{\tilde{K}_{+}}\frac{|\tau^1-\lambda|^2}{1-|\lambda|^2},\frac{1}{r_1}\frac{|\tau^2-\mu|^2}{1-|\mu|^2} \bigg\}, 
$$ for all $\lambda, \mu\in\DD.$ In particular, we have 
$$B_{n_k+1}\le \max \bigg\{\frac{A_{n_k}}{\tilde{K}_{+}}, \frac{B_{n_k}}{r_1} \bigg\} $$
$$\le \max \bigg\{\frac{R_{n_k}}{\tilde{K}_{+}}, \frac{r}{\tilde{K}r_1} \bigg\}$$ 
\begin{equation}\label{useful60}
   =\frac{R_{n_k}}{\tilde{K}_{+}}, 
\end{equation}
as $\frac{r}{\tilde{K}r_1}<\frac{\rho}{\tilde{K}_{+}}\le \frac{R_{n_k}}{\tilde{K}_{+}}$, for all $k$. Combining (\ref{useful50}) with (\ref{useful60}), we obtain
$$KR_{n_k+1}=\max\{KA_{n_k+1}, B_{n_k+1}\}<cKR_{n_k},$$ for some $c\in (0,1)$ and all $k$ large enough. Letting $k\to\infty$ then leads to a contradiction. \par 
Now, assume $v^2_{\tau}=0$. In view of (\ref{useful50}), we can find $r'<\rho$ such that for all $k$ large enough we have $A_{n_k+1}\le r'<\rho$. Also, since $v^1_{\tau}\neq 0$, we can mimic the proof of (\ref{useful50}) (with $\psi$ in place of $\phi$) to obtain $B_{n_k+2}< c_1KR_{n_k+1}$ for some $c_1\in (0, 1)$ and all $k$ large enough. Similarly, since $u^1_{\tau}\neq 0$, we can mimic the proof of (\ref{useful60}) (with $\phi$ in place of $\psi$) to obtain the existence of $c_2\in (0, 1)$ such that $A_{n_k+2}\le c_2 R_{n_k+1},$ for all $k$ large enough. Thus, we arrive at the conclusion $KR_{n_k+2}=\max\{KA_{n_k+2}, B_{n_k+2}\}<\max\{c_1, c_2\}KR_{n_k+1,}$ for all $k$ large enough, which yields a contradiction when we let $k\to\infty.$ \par 
The only case left to examine is when $R_n\to \rho>0$ and $u^1_{\tau}=v^2_{\tau}=0.$  Mimicking the proof of ``(ii) implies (iii)" from Theorem \ref{Juliarevisited}, we may conclude that
 $$A_{n+1}\le \frac{B_{n}}{K} \hspace{0.1 cm} \text{ and } \hspace{0.1 cm} B_{n+1}\le KA_n,$$
 for all $n\ge 1.$
Thus, $$A_{n+2}\le A_n \hspace{0.1 cm} \text{ and } \hspace{0.1 cm} B_{n+2}\le B_n,$$
which means that the sequences $\{A_{2n}\}, \{A_{2n+1}\}, \{B_{2n}\} $ and $\{B_{2n+1}\}$ are all non-increasing. Thus, there exist nonnegative numbers $\rho_0, \rho_1, \rho'_0, \rho'_1$ such that $A_{2n}\to \rho_0, A_{2n+1}\to \rho_1$, $B_{2n+1}\to \rho'_1$ and $B_{2n}\to \rho'_0.$ The inequalities $A_{2n+1}\le \frac{B_{2n}}{K} $ and $B_{2n}\le KA_{2n-1}$ give us $\rho_1\le \rho'_0/K$ and $\rho'_0\le K\rho_1$, respectively. Thus, $\rho'_0=K\rho_1$ and an entirely analogous argument shows that $\rho'_1=K\rho_0.$ We conclude that 
$$A_{2n}\to \rho_0, \hspace{0.4 cm} A_{2n+1}\to \rho_1,  \hspace{0.4 cm} B_{2n+1}\to K\rho_0, \hspace{0.4 cm} B_{2n}\to K\rho_1,$$
where $\max\{\rho_0, \rho_1\}=\rho$ (by definition of $\rho$) and so $\rho_0, \rho_1$ cannot be zero at the same time. This concludes the proof. \end{proof}

 \small
\subsection{The (Type I, Type II) case} 
\large

Assume now that $\phi$ and $\psi$ are left Type I and right Type II functions, respectively. This immediately implies that $F=(\phi, \psi)$ does not have any interior fixed points. In this setting, Herv\'{e} proved that any cluster point of the sequence of iterates $\{F^n\}$ must be of the form $(\tau^1, h),$ where $h$ is either a holomorphic function $\DD^2\to\DD$ or a unimodular constant and $\tau^1$ is the common Denjoy-Wolff point of all slices $\phi_{\mu}$ (Theorem \ref{HERVE's THEOREM}(iii)).  Examples showing that this conclusion cannot, in general, be improved, are contained in \cite[Section 11]{HERVE}.
\par  Now, if we, in addition, assume the existence of $\sigma\in \mathbb{T}$ such that $(\tau^1, \sigma)$ is a right Type II DW point for $\psi$, stronger conclusions can be drawn about the cluster set of $\{F^n\}.$
\begin{proposition}\label{TypeI,TypeIIWITHDW} Assume $F=(\phi, \psi):\DD^2\to\DD^2$ is such that $\phi$ is a left Type I function (with $\tau^1$ being the common Denjoy-Wolff point of all slices $\phi_{\mu}$) and $\psi$ has a right Type II DW point of the form $\tau=(\tau^1, \sigma)\in\mathbb{T}^2$. Then, there exists $K>0$ such that 
$$F(E(\tau, R, KR))\subset E(\tau, R, KR),$$
for all $R>0.$ Thus, any cluster point of the sequence of iterates $\{F^n\}$ must be of the form $(\tau^1, h),$ where $h$ is either a holomorphic function $\DD^2\to\DD$ or the constant $\sigma.$
\end{proposition}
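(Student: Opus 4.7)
The plan is to combine the weighted Julia inequalities from Corollaries \ref{typeIhorosph} and \ref{TypeIIhorosph} to produce the desired invariant horosphere, and then to invoke Theorem \ref{HERVE's THEOREM}(iii) together with a simple boundary-tangency argument to pin down the possible cluster points.

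First, I would observe that $F$ has no interior fixed points: if $F(\lambda_0,\mu_0)=(\lambda_0,\mu_0)$, then $\lambda_0$ would be an interior fixed point of $\phi_{\mu_0}$, which is impossible since $\phi_{\mu_0}$ is fixed-point-free with Denjoy-Wolff point $\tau^1\in\mathbb{T}$. Hence Theorem \ref{HERVE's THEOREM}(iii) applies to $F$. Now, since $\phi$ is left Type I with common Denjoy-Wolff point $\tau^1$, Corollary \ref{typeIhorosph} produces
$$\frac{|\tau^1-\phi(\lambda,\mu)|^2}{1-|\phi(\lambda,\mu)|^2}\le \frac{|\tau^1-\lambda|^2}{1-|\lambda|^2},$$
for all $(\lambda,\mu)\in\DD^2$. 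Since $\tau=(\tau^1,\sigma)\in\mathbb{T}^2$ is a right Type II DW point for $\psi$, the right Type II analogue of Corollary \ref{TypeIIhorosph}(i) yields a constant $A>0$ such that
$$\frac{|\sigma-\psi(\lambda,\mu)|^2}{1-|\psi(\lambda,\mu)|^2}\le \max\bigg\{\frac{|\sigma-\mu|^2}{1-|\mu|^2},\ \frac{1}{A}\frac{|\tau^1-\lambda|^2}{1-|\lambda|^2}\bigg\},$$
for all $(\lambda,\mu)\in\DD^2$.

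Set $K=1/A$ and fix $(\lambda,\mu)\in E(\tau,R,KR)=E(\tau^1,R)\times E(\sigma,KR)$. The first inequality immediately gives $\phi(\lambda,\mu)\in E(\tau^1,R)$. For the second coordinate, the two terms inside the $\max$ are strictly less than $KR$ and $R/A=KR$, respectively, so $\psi(\lambda,\mu)\in E(\sigma,KR)$. This proves $F(E(\tau,R,KR))\subset E(\tau,R,KR)$ for every $R>0$.

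Finally, let $(g,h)$ be a cluster point of $\{F^n\}$. By Theorem \ref{HERVE's THEOREM}(iii) we already know $g\equiv\tau^1$ and $h\in\mathcal{S}_2$; the maximum modulus principle further implies that $h$ is either a map into $\DD$ or a unimodular constant, so it suffices to rule out $h\equiv c$ for some $c\in\mathbb{T}\setminus\{\sigma\}$. Pick any base point $(\lambda_0,\mu_0)\in\DD^2$ and choose $R_0>0$ with $(\lambda_0,\mu_0)\in E(\tau,R_0,KR_0)$. The invariance established above forces $\psi_n(\lambda_0,\mu_0)\in E(\sigma,KR_0)$ for every $n\ge 1$, and since $\text{cl}(E(\sigma,KR_0))\cap\mathbb{T}=\{\sigma\}$, any limit of $\psi_n(\lambda_0,\mu_0)$ lying on $\mathbb{T}$ must equal $\sigma$; hence $c=\sigma$, as required. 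The only mildly delicate step is checking that the right-hand side of the $\psi$-inequality collapses to $KR$, which happens precisely because the choice $K=1/A$ balances the two terms in the $\max$; beyond that the argument is essentially bookkeeping on top of Herv\'{e}'s theorem.
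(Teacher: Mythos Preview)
Your proof is correct and follows essentially the same approach as the paper: combine Corollary \ref{typeIhorosph} for $\phi$ with the right Type II version of Corollary \ref{TypeIIhorosph}(i) for $\psi$ (with your $K=1/A$ matching the paper's $E(\tau,AR,R)$ after the obvious reparametrization), and then invoke Theorem \ref{HERVE's THEOREM}(iii) together with the fact that $\text{cl}(E(\sigma,KR))\cap\mathbb{T}=\{\sigma\}$ to pin down the second coordinate of any cluster point. Your explicit check that $F$ has no interior fixed points is a nice addition that the paper handles only at the subsection level.
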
\begin{proof} 
Assuming $\psi$ has a right Type II DW point of the form $\tau=(\tau^1, \sigma)$, one can combine Corollary \ref{typeIhorosph} with Corollary \ref{TypeIIhorosph} to conclude that 
$$F(E(\tau, AR, R))\subset E(\tau, AR, R),$$
for all $R>0$, where $A=\Big(\liminf_{\lambda\to\tau^1}\frac{1-|\eta(\lambda)}{1-|\lambda|}\Big)^{-1}>0$ and $\eta:\DD\to\DD$ is the holomorphic function satisfying $\psi(\lambda, \eta(\lambda))=\eta(\lambda)$ for all $\lambda\in\DD$. 
\par 
To obtain the conclusion regarding the behavior of the iterates, combine the previous result with Theorem \ref{HERVE's THEOREM}(iii) and the observation that, for any $R>0$, cl$(E(\tau, AR, R))\cap\mathbb{T}^2=\tau.$
\end{proof}
\begin{remark}
In the absence of a right Type II DW of the form $(\tau^1, \sigma)$ for $\psi$, the behavior of $\{F^n\}$ could be considerably more complicated. Indeed, it could even happen that infinitely many unimodular constants $\{\sigma(i)\text{ }|\text{ }i\in I\}$ exist such that the constant $(\tau^1, \sigma(i))$ is a cluster point of $\{F^n\}$, for every $i\in I$; see the 2nd example in \cite[Section 11]{HERVE}.
\end{remark} 
In the setting of Proposition \ref{TypeI,TypeIIWITHDW}, it is clear (in view of Theorem \ref{TypeIcharabridged}) that $(\tau^1, \sigma)$ will always be a left Type I DW point for $\phi,$ no matter the value of $\sigma.$ Surprisingly, having $(\tau^1, \sigma)$ not be a C-point for $\phi$ will force the entire sequence  $\{F^n\}$ to converge to $(\tau^1, \sigma).$ This is the content of Theorem \ref{TypeITypeIIBBUTNOTC}, the proof of which does not make use of Herv\'{e}'s results.
\begin{proof}[Proof of Theorem \ref{TypeITypeIIBBUTNOTC}]
  Assume $\tau=(\tau^1, \sigma)\in\mathbb{T}^2$  satisfies the hypotheses of the theorem. Clearly, $\phi$ and $\psi$ will be left Type I and right Type II functions, respectively, with the common Denjoy-Wolff point of all slices $\phi_{\mu}$ being $\tau^1.$ By Proposition \ref{TypeI,TypeIIWITHDW}, there exists $K>0$ such that
  \begin{equation}\label{useful100}
F(E(\tau, KR, R)) \subset E(\tau, KR, R),
\end{equation}
for all $R>0.$  Now, fix $(\lambda_0, \mu_0)\in\DD^2$. For convenience, we will write $F^n=(\phi_n, \psi_n)$ in place of $F^n(\lambda_0, \mu_0)=(\phi_n(\lambda_0, \mu_0), \psi_n(\lambda_0, \mu_0))$. We also define:
  $$A_n=\frac{|\tau^1-\phi_n|^2}{1-|\phi_n|^2} \hspace{0.3 cm} \text{ and }\hspace{0.3 cm} B_n=\frac{|\sigma-\psi_n|^2}{1-|\psi_n|^2},$$
  for all $n\ge 1.$ Corollary \ref{typeIhorosph} then yields that $\{A_n\}$ is non-increasing. \par
  First, we show that $A_n\to 0.$ Indeed, assume instead that $A_n\to\rho>0.$ (\ref{useful100}) implies that there exists $B>0$ such that $B_n<B,$ for all $n\ge 1.$ Also, let $\{M_k\}\subset\mathbb{R}^{+}$ be any increasing sequence tending to $\infty.$ Corollary \ref{typeIhorosph} implies that we can find a decreasing sequence $\{r_k\}$, $r_k\to 1$ such that 
  \begin{equation}\label{useful110}
    A_{n+1}\le\max\bigg\{\frac{A_n}{r_k}, \frac{B_n}{M_k} \bigg\},
  \end{equation}
  for all $n, k\ge 1.$ Let $\epsilon>0$ and choose $k=k_0$ to be such that $B/M_{k_0}<\rho.$ Also, since $r_{k_0}>1$, we can find $N\ge 1$ such that $A_N/r_{k_0}<\rho$. Thus, (\ref{useful110}) yields
  $$A_{N+1}\le \max\bigg\{\frac{A_N}{r_{k_0}}, \frac{B_N}{M_{k_0}} \bigg\}<\rho,$$
  a contradiction. Hence, $A_n\to 0.$ We will show that $B_n\to 0$ as well. Indeed, assume that $B_n\not\to 0$.  (\ref{useful100}) combined with the fact that $A_n\to 0$ implies that $\liminf_nB_n=s>0.$  Also, given $0<K_{-}<K$, Corollary \ref{TypeIIhorosph} yields that for any $t_2>1$ sufficiently close to $1$ one obtains 
  \begin{equation}\label{useful130}
      B_{n+1}\le\max\bigg\{\frac{A_n}{K_{-}}, \frac{B_n}{t_2} \bigg\},
  \end{equation}
  for all $n\ge 1.$ Now, choose $n_0$ such that $A_{n_0}/K_{-}<s/2$ and also $B_{n_0}/t_2<s.$ In view of (\ref{useful130}), we obtain
  $$B_{n_0+1}\le\max\bigg\{\frac{A_{n_0}}{K_{-}}, \frac{B_{n_0}}{t_2} \bigg\}<s,$$
  a contradiction. We conclude that $A_n, B_n\to 0,$ which gives us $F^n=F^n(\lambda_0, \mu_0)\to(\tau^1, \sigma)$. Since $(\lambda_0, \mu_0)$ was arbitrary, we are done.
\end{proof} 
\begin{remark}
We have actually reached the even stronger conclusion that, in the setting of Theorem \ref{TypeITypeIIBBUTNOTC}, the iterates $F^n(\lambda)$ converge to $(\tau^1, \sigma)$ in the horospheric topology, for any $\lambda\in\DD^2.$
\end{remark}
\begin{exmp}\label{exmp1}
Define $\phi, \psi:\DD^2\to\DD$ by 
$$\phi(\lambda)=\frac{1-\lambda^1\lambda^2}{2-\lambda^1-\lambda^2} $$
and 
$$\psi(\lambda)=\begin{cases}
    \frac{(\lambda^2-\lambda^1)-2(1-\lambda^1)(1-\lambda^2)\log\big(\frac{1+\lambda^2}{1-\lambda^2}\frac{1-\lambda^1}{1+\lambda^1}\big)}{(\lambda^2-\lambda^1)+2(1-\lambda^1)(1-\lambda^2)\log\big(\frac{1+\lambda^2}{1-\lambda^2}\frac{1-\lambda^1}{1+\lambda^1}\big)} \hspace{1 cm} \text{ if }\lambda^1\neq\lambda^2, \\
    \frac{-3+5\lambda^1}{5-3\lambda^1} \hspace{5.5 cm} \text{ if }\lambda^1=\lambda^2,
\end{cases}$$
for all $\lambda\in\DD^2$ ($\psi$ has been taken from \cite{McPascoerevisited}). \par Since the slice function $\phi_0$ has $1$ as its Denjoy-Wolff point, Theorem \ref{TypeIcharabridged} implies that the entire closed face $\{1\}\times\text{cl}(\DD)$ consists of B-points for $\phi$ and also $\phi(1, \sigma)=1, $ for all $|\sigma|\le 1.$ Actually, it is easy to see that $\phi$ extends analytically across $(1, \sigma)$ whenever $\sigma\neq 1.$ Now, for $\sigma=1,$ it can be verified that 
$$\frac{D_{-(1, M)}\phi(1, 1)}{-\phi(1, 1)}=-D_{-(1, M)}\phi(1, 1)=\frac{M}{M+1}<1,$$ 
for all $M>0$. Thus, $(1, 1)$ is not a C-point for $\phi$ and also, since \\ $\lim_{M\to\infty}M/(M+1)=1,$ the angular derivative of every slice function $\phi_{\mu}$ at its Denjoy-Wolff point $1$ has to be equal to $1$ (this can be also verified directly, as the slice functions are easy to compute in this case). \par
 Now, we look at $\psi.$ Since $\psi(0, 0)=0,$ $\psi$ is clearly a left (also a right) Type II function. Also, as shown in \cite{McPascoerevisited}, $(1, 1)$ is a B-point for $\psi$ that is not a C-point and $\psi(1, 1)=1.$ We wish to determine whether $(1, 1)$ is also a left Type II DW point for $\psi.$ However, computing the function $\xi:\DD\to\DD$ such that $\psi(\xi(\mu), \mu)=\xi(\mu),$ for all $\mu\in\DD,$ seems impractical here. Instead, we will look at the directional derivatives of $\psi$ at $(1, 1)$ along $\delta_M=(1, M)$ and then use Theorem \ref{ultimateDWchar}. Indeed, in \cite[Section 4]{McPascoerevisited} it was determined that 
 $$K_{(1, 1)}(M)=\frac{D_{-(1, M)}\psi(1, 1)}{-\psi(1, 1)}=-D_{-(1, M)}\psi(1, 1)$$ $$=4M\int_{-1}^1\frac{dt}{(1-t)+(1+t)M}$$
 $$=\begin{cases}
 4\frac{M\ln M}{M-1} \hspace{0.35 cm} \text{ if }M\neq 1,
     \\
   4  \hspace{1.4 cm} \text{ if } M=1.
 \end{cases} $$
Since $K_{(1, 1)}(1)>1$ and $\lim_{M\to 0+}K_{(1, 1)}(M)=0, $ there exists $C>0$ such that $K_{(1, 1)}(C)=1.$ Theorem \ref{ultimateDWchar} then implies that $(1, 1)$ is a left Type II DW point for $\psi$. Also, since $\psi(\lambda^1, \lambda^2)=\psi(\lambda^2, \lambda^1)$, $(1,1)$ must also be a right Type II DW point for $\psi.$
\par 
Now, define $F=(\phi, \psi):\DD^2\to\DD^2.$ In view of our previous observations, we have that $(1, 1)$ is a left Type I DW point for $\phi$ that is not a C-point and it is also a right Type II DW point for $\psi.$ Theorem \ref{TypeITypeIIBBUTNOTC} then allows us to conclude that $F^n\to (1, 1)$ uniformly on compact subsets of $\DD^2.$
\end{exmp}

Before ending this subsection, we remark that the (Type II, Type I) case can be treated in an entirely analogous way.

 \small
\subsection{The (Type I, Type I) case} 
\large

Finally, assume that $\phi$ and $\psi$ are left Type I and right Type I functions, respectively, hence $F=(\phi, \psi)$ does not have any interior fixed points. The following characterization is an easy consequence of Theorem \ref{TypeIcharabridged}, so we omit the proof.
\begin{proposition}\label{(Type1,TypeI)char}
Let $F=(\phi, \psi):\DD^2\to\DD^2$ be holomorphic. Then, $\phi$ and $\psi$ are left Type I and right Type I functions, respectively, if and only if there exists $\tau=(\tau^1, \tau^2)\in\mathbb{T}^2$ that is a left Type I DW point for $\phi$ and a right Type I DW point for $\psi.$
\end{proposition}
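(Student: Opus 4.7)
The proposition is essentially a direct unpacking of Definition \ref{TypeI+IIdef} combined with the equivalences already established in Theorem \ref{TypeIcharabridged} (and its right Type I analogue), so the plan is short.

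For the forward implication, I would assume that $\phi$ is a left Type I function and $\psi$ is a right Type I function. By definition, there exist $\tau^1, \tau^2 \in \mathbb{T}$ such that $\tau^1$ is the common Denjoy-Wolff point of every slice $\phi_\mu$ and $\tau^2$ is the common Denjoy-Wolff point of every slice $\psi^\lambda := \psi(\lambda, \cdot)$. By the implication (iii)$\Rightarrow$(ii) of Theorem \ref{TypeIcharabridged}, every point in $\{\tau^1\} \times \mathrm{cl}(\DD)$ is a left Type I DW point for $\phi$; in particular $(\tau^1, \tau^2)$ is. Applying the right Type I version of the same theorem to $\psi$, every point in $\mathrm{cl}(\DD) \times \{\tau^2\}$ is a right Type I DW point for $\psi$, so $(\tau^1,\tau^2)$ is one. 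Setting $\tau = (\tau^1, \tau^2)$ finishes this direction.

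For the converse, I would assume that $\tau = (\tau^1, \tau^2) \in \mathbb{T}^2$ is simultaneously a left Type I DW point for $\phi$ and a right Type I DW point for $\psi$. Definition \ref{TypeI+IIdef}(i) already requires that $\phi \neq \pi^1$, while Definition \ref{TypeI+IIdef}(iii) together with (i) applied to $\widetilde{\psi}$ requires $\psi \neq \pi^2$. Now I invoke the implication (i)$\Rightarrow$(iii) of Theorem \ref{TypeIcharabridged}, which says that the existence of a single point of the form $(\tau^1, \sigma)$ which is a left Type I DW point forces $\phi$ to be a left Type I function with common Denjoy-Wolff point $\tau^1$ for all its slices; applying the right Type I analogue to $\psi$ gives that $\psi$ is a right Type I function with common Denjoy-Wolff point $\tau^2$ for all slices $\psi^\lambda$. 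This completes the proof.

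There is no real obstacle here: the entire argument is packaging of Theorem \ref{TypeIcharabridged} and its $\widetilde{\phantom{\phi}}$-version, with a brief check that the exclusions $\phi \neq \pi^1$ and $\psi \neq \pi^2$ are automatically guaranteed by the very definition of the relevant DW points. This is exactly why the authors omit the proof.
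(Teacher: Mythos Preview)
Your proposal is correct and matches the paper's own reasoning: the authors explicitly state that the proposition is an easy consequence of Theorem \ref{TypeIcharabridged} and omit the proof, and your argument simply unpacks that by invoking the implications (iii)$\Rightarrow$(ii) and (i)$\Rightarrow$(iii) of Theorem \ref{TypeIcharabridged} (and its right-hand analogue), together with the observation that the exclusions $\phi\neq\pi^1$, $\psi\neq\pi^2$ are built into Definition \ref{TypeI+IIdef}.
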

Now, let $\tau^1$ and $\tau^2$ be as in Proposition \ref{(Type1,TypeI)char}. In this setting, Herv\'{e} proved that either every cluster point of $\{F^n\}$ will be of the form 
$(\tau^1, h)$, where $h$ is either a holomorphic function $\DD^2\to\DD$ or the constant $\tau^2$, or every cluster point will be of the form 
$(g, \tau^2)$, where $g$ is either a holomorphic function $\DD^2\to\DD$ or the constant $\tau^1$ (Theorem \ref{HERVE's THEOREM}(ii)).  Also, it is not hard to see that in e.g. the former case, there exists a (parabolic) fractional linear transformation $T$ with Denjoy-Wolff point $\tau^2$ such that, whenever both $(\tau^1, h_1)$ and $(\tau^1, h_2)$ appear as \textit{non-constant} cluster points of $\{F^n\}$, it must be true that $h_1=T\circ h_2$ (see the 2nd remark in \cite[Section 14]{HERVE}). Examples showing that these conclusions cannot, in general, be improved are contained in \cite[Section 15]{HERVE}.\par Unfortunately, the proof of Theorem \ref{HERVE's THEOREM}(ii) (to be found in \cite[Sections 12-13]{HERVE}) does not make it clear whether it is possible to determine ``beforehand" which of the two constants ($\tau^1$ or $\tau^2$) will be the one that appears as a coordinate in every cluster point of $\{F^n\}.$ We will show that, under the extra assumption of $(\tau^1, \tau^2)$ not being a C-point for either $\phi$ or $\psi$, one can draw stronger conclusions.  Our proof is independent of Herv\'{e}'s result. 

\begin{proof}[Proof of Theorem \ref{TypeITypeIBBUTNOTC}]
Assume $\tau=(\tau^1, \tau^2)\in\mathbb{T}^2$ satisfies the hypotheses of the theorem. Clearly, $\phi$ and $\psi$ will be left Type I and right Type I functions, respectively. Also, Corollary \ref{TypeIIhorosph} tells us that 
  \begin{equation}\label{useful210}
      F(E(\tau, R_1, R_2))\subset E(\tau, R_1, R_2),
  \end{equation}
  for all $R_1, R_2>0.$ For any fixed $(\lambda_0, \mu_0)\in\DD^2,$ define:
  $$A_n=\frac{|\tau^1-\phi_n(\lambda_0, \mu_0)|^2}{1-|\phi_n(\lambda_0, \mu_0)|^2} \hspace{0.3 cm} \text{ and }\hspace{0.3 cm} B_n=\frac{|\tau^2-\psi_n(\lambda_0, \mu_0)|^2}{1-|\psi_n(\lambda_0, \mu_0)|^2},$$
  for all $n\ge 1.$ (\ref{useful210}) then implies that both $\{A_n\}$ and $\{B_n\}$ are non-increasing. We can then argue as in the proof of Theorem \ref{TypeITypeIIBBUTNOTC} to deduce that $A_n\to 0$ (assuming $\tau$ is not a C-point for $\phi$). Thus, every cluster point of $\{F^n\}$ will be of the form $(\tau^1, h),$ where $h$ is holomorphic on $\DD^2$ and bounded by $1.$ Moreover, since $\{B_n\}$ is bounded, one can deduce that $h$ will have to be either a holomorphic map $\DD^2\to\DD$ or the constant $\tau^2.$
\end{proof}
\begin{remark}
We have actually reached the even stronger conclusion that, in the setting of Theorem \ref{TypeITypeIBBUTNOTC} with e.g. $\tau$ not being a C-point for $\phi$, the points $\phi_n(\lambda)$ converge to $\tau^1$ in the horospheric topology of the unit disk, for any $\lambda\in\DD^2.$
\end{remark}

\begin{exmp} \label{exmp2}
   Define $\phi:\DD^2\to\DD$ by 
   $$\phi(\lambda)=-\frac{3\lambda^1\lambda^2-\lambda^1-\lambda^2-1}{3-\lambda^1-\lambda^2-\lambda^1\lambda^2},$$
   for all $\lambda\in\DD^2$ (this example appears in \cite{SolaTullydynamics}). It can be easily  verified that the Denjoy-Wolff point of the slice function $\phi_0(z)=(z+1)/(3-z)$ is equal to $1.$ Theorem \ref{TypeIcharabridged} then implies that the closed face $\{1\}\times\text{cl}(\DD)$ consists of B-points for $\phi$ and also $\phi(1, \sigma)=1,$ for all $|\sigma|\le 1$. Moreover, we can compute 
   $$\frac{D_{-(1, M)}\phi(1, 1)}{-\phi(1,1)}=-D_{-(1, M)}\phi(1, 1)=\frac{M}{M+1},$$
   for all $M>0.$ Thus, $(1, 1)$ is not a C-point for $\phi$ (and also $\phi'_{\mu}(1)=\lim_{M\to\infty}M/(M+1)=1,$ for all $\mu\in\DD).$ \par
   Now, let $F=(\phi, \psi):\DD^2\to\DD^2,$ where $\psi$ is any (holomorphic) right Type I function such that the Denjoy-Wolff point of all slice functions $\psi(\lambda, \cdot)$ is equal to $1.$ Theorem \ref{TypeITypeIBBUTNOTC} then implies that every cluster point of $\{F^n\}$ will be of the form $(1, h),$ where $h$ is either a holomorphic function $\DD^2\to\DD$ or the constant $1.$ Now, if we take $\psi$ to be e.g. 
   $$\psi(\lambda^1, \lambda^2)=\frac{1-\lambda^1\lambda^2}{2-\lambda^1-\lambda^2},$$
   our observations from Example \ref{exmp1} (and the fact that $\psi(\lambda^1, \lambda^2)=\psi(\lambda^2, \lambda^1)$) show that $(1, 1)$ will be a right Type I DW point for $\psi$ that is not a C-point. Applying Theorem \ref{TypeITypeIBBUTNOTC} again then yields (for this particular choice of $\psi$) that $F^n\to (1, 1)$ uniformly on compact subsets of $\DD^2.$
\end{exmp}

 \small
\section{CONNECTION WITH FROSINI'S WORK} \label{Frosinisection}
\large
Points of Denjoy-Wolff type for holomorphic maps $F:\DD^2\to\DD^2$ have been investigated by Frosini in \cite{FrosiniBusemann}, \cite{FrosiniArxiv}, \cite{FrosiniDynamics}. She defined Denjoy-Wolff points for $F$ as those fixed boundary points where $F$-invariant horospheres are centered, with the exact definition depending on the kind of horospheres in question. In particular, motivated by the definition of ``small" and ``big" horospheres found in \cite{AbateJuliaWolff}, she defined (see \cite[Definitions 3.2-3.3]{FrosiniDynamics}) \textit{quasi-Wolff} and \textit{Wolff} points for $F$ as those fixed boundary points where small horospheres are mapped into big ones and small horospheres are mapped into small ones, respectively. Unfortunately, the existence of quasi-Wolff points is, in general, not very helpful for describing the behavior of $\{F^n\}$,  as big horospheres offer very limited control over the iterates. On the other hand, while Wolff points do offer much more restrictive Julia-type inequalities, they do not always exist (see \cite[Theorem 4.1]{FrosiniDynamics} for a characterization of the set of Wolff points for any self-map $F$ of $\DD^2$). Finally, in \cite[Section 8]{FrosiniBusemann}, Frosini considered \textit{generalized Wolff points}, which motivate our next definition.
\begin{definition}\label{GeneralizedDenjoyWolff}
Let $F=(\phi, \psi):\DD^2\to\DD^2$ be holomorphic with $\tau\in\partial\DD^2$. If there exists $M\in (0, \infty)$ such that 
$$F(E(\tau, R, MR))\subset E(\tau, R, MR),$$
for all $R>0,$ $\tau$ will be called a \textit{generalized Denjoy-Wolff point} for $F.$ 
\end{definition}
As a consequence of Julia's inequality for the bidisk, any generalized Denjoy-Wolff point $\tau\in\partial\DD^2$ of $F$ must be a B-point point for both $\phi$ and $\psi$ such that $F(\tau)=\tau$.  
Notice also that, in contrast to \cite[Definition 33]{FrosiniBusemann}, we do not assume the existence of any complex geodesics, instead relying only on the existence of $F$-invariant ``weighted" horospheres (although the definitions turn out to be equivalent, see Remark \ref{aremark}).  \par 
Let $W(F)$ denote the set of all generalized Denjoy-Wolff points of $F$. Our next result is a slight refinement of \cite[Theorem 39]{FrosiniBusemann}, obtained as a straightforward application of the results developed in this paper. Note that $\tau^1, \tau^2$ will always denote points in $\mathbb{T}$.
\begin{theorem}\label{Frosinibasically}
    Let $F=(\phi, \psi):\DD^2\to\DD^2$ be holomorphic such that $\phi\neq\pi^1$, $\psi\neq\pi^2$ and without any interior fixed points. Then, one and only one of the following three cases is possible:
    \begin{itemize}
        \item[(i)] $W(F)=\{(\tau^1, \tau^2)\}$ if and only if $\phi$ is left Type II and $\psi$ is right Type II;
 \item[(ii)] $\{\tau^1\}\times \DD\subset W(F)\subset\big(\{\tau^1\}\times\DD\big)\cup \{(\tau^1, \tau^2)\}$ (resp., $\DD\times\{\tau^2\}\subset W(F)\subset\big(\DD\times\{\tau^2\}\big)\cup \{(\tau^1, \tau^2)\})$ if and only if $\phi$ is left Type I and $\psi$ is right Type II (resp., $\phi$ is left Type II and $\psi$ is right Type I);
      \item[(iii)]   $W(F)=\big(\{\tau^1\}\times \DD\big)\cup\{(\tau^1, \tau^2)\}\cup \big(\DD\times\{\tau^2\}\big)$ if and only if $\phi$ is left Type I and $\psi$ is right Type I.
    \end{itemize}
\end{theorem}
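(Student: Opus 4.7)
The plan is to translate membership in $W(F)$ into directional-derivative conditions (via Theorems \ref{Juliarevisited} and \ref{ultimateDWchar}), and then run the case analysis forced on us by Theorem \ref{130}: $\phi$ must be left Type I or left Type II, and $\psi$ must be right Type I or right Type II.

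First I would dispose of the facial boundary points. For $\sigma=(\tau^1,\sigma^2)$ with $\tau^1\in\mathbb{T}$ and $\sigma^2\in\DD$, the convention $E(s,R)=\DD$ for $s\in\DD$ gives $E(\sigma,R,MR)=E(\tau^1,R)\times\DD$ for every $M>0$; hence $\sigma\in W(F)$ iff $\phi(E(\tau^1,R)\times\DD)\subset E(\tau^1,R)$ for all $R>0$. Slicing in the second variable and invoking one-variable Julia, this is equivalent to $\tau^1$ being the Denjoy-Wolff point of every $\phi_\mu$, which by Theorem \ref{TypeIcharabridged} is equivalent to $\phi$ being a left Type I function with DW point $\tau^1$. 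A symmetric argument handles $(\sigma^1,\tau^2)$ with $\sigma^1\in\DD$. Thus $\{\tau^1\}\times\DD$ (respectively $\DD\times\{\tau^2\}$) is either entirely contained in $W(F)$ or disjoint from it, the former happening exactly when $\phi$ is left Type I with DW point $\tau^1$ (respectively $\psi$ is right Type I with DW point $\tau^2$).

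Next I would analyze distinguished boundary points $\sigma=(\sigma^1,\sigma^2)\in\mathbb{T}^2$. Unpacking $F(E(\sigma,R,MR))\subset E(\sigma,R,MR)$ coordinate-wise and applying Theorem \ref{Juliarevisited} to $\phi$ with weight $M$ shows $\phi(E(\sigma,R,MR))\subset E(\sigma^1,R)$ is equivalent to $K_\sigma(M)\le 1$ for $\phi$. For the second coordinate I would exploit the identity $\widetilde\psi(E(\widetilde\sigma,R_1,R_2))=\psi(E(\sigma,R_2,R_1))$, where $\widetilde\sigma=(\sigma^2,\sigma^1)$, to apply Theorem \ref{Juliarevisited} to $\widetilde\psi$; a short computation then recasts $\psi(E(\sigma,R,MR))\subset E(\sigma^2,MR)$ as the bound $K_{\widetilde\sigma}(1/M)\le 1$ for $\widetilde\psi$. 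Combining both with Theorem \ref{ultimateDWchar}, the existence of a valid $M>0$ is equivalent to $\sigma$ being simultaneously a left Type I or II DW point for $\phi$ and a right Type I or II DW point for $\psi$, with the maximal Type II constants (when applicable) having product at least $1$---precisely the compatibility required so that a common $M$ exists.

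The proof then runs through the four cases. In $(I,I)$ both faces lie in $W(F)$, and Proposition \ref{TypeIuniqueness} forces the only $\mathbb{T}^2$-contribution to be $(\tau^1,\tau^2)$, yielding (iii). In the mixed $(I,II)$ case only the face $\{\tau^1\}\times\DD$ contributes, and Proposition \ref{TypeIIuniqueness}(iii) together with the uniqueness built into Theorem \ref{TypeIIcharabridged} shows that at most one additional point $(\tau^1,\tau^2)\in\mathbb{T}^2$ can occur (depending on whether the $\eta$-map of $\psi$ has $\tau^1$ as a B-point), yielding the two-sided inclusion of (ii); the $(II,I)$ case is symmetric. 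The main obstacle is the $(II,II)$ case, where no face lies in $W(F)$ and the no-interior-fixed-point hypothesis becomes essential: I would invoke Theorem \ref{(Type II, Type II)} to produce the unique $(\tau^1,\tau^2)\in\mathbb{T}^2$ that is left Type II for $\phi$ with some constant $K$ and right Type II for $\psi$ with constant $1/K$. The relation $K\cdot(1/K)=1$ is exactly the compatibility needed so that both directional derivative bounds hold at the same $M$, thereby giving $W(F)=\{(\tau^1,\tau^2)\}$ and establishing (i).
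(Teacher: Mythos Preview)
Your proposal is correct and follows essentially the same route as the paper: both arguments reduce membership in $W(F)$ to the DW-point classification via Theorem~\ref{Juliarevisited} and Theorem~\ref{ultimateDWchar}, handle facial points through Corollary~\ref{typeIhorosph} (your one-variable Julia slicing is exactly that corollary), and then invoke Propositions~\ref{TypeIuniqueness}, \ref{TypeIIuniqueness} and Theorem~\ref{(Type II, Type II)} for uniqueness in the respective cases. The only organizational difference is that you front-load the translation of ``$\sigma\in W(F)$'' into directional-derivative language once and for all before splitting into cases, whereas the paper runs the case split first and appeals to the relevant corollaries inside each case; the mathematical content is the same. One small point to make explicit in the $(II,II)$ case: after producing $(\tau^1,\tau^2)$ from Theorem~\ref{(Type II, Type II)}, you should note that any other $\sigma\in W(F)\cap\mathbb{T}^2$ would, by your own preliminary analysis, satisfy condition~(i) of that theorem for some $K'\in[1/A_\psi(\sigma),A_\phi(\sigma)]$, and hence must coincide with $(\tau^1,\tau^2)$ by the uniqueness clause there---this closes the containment $W(F)\subset\{(\tau^1,\tau^2)\}$ that your last sentence asserts.
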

\begin{proof} Theorem \ref{130} implies that (i)-(iii) contain all possible cases. \par 
    First, assume $\phi$ is left Type II and $\psi$ is right Type II. Theorem \ref{(Type II, Type II)} and Corollary \ref{TypeIIhorosph} imply that $W(F)\supset\{(\tau^1, \tau^2)\}$ for some $\tau^1, \tau^2\in\mathbb{T}$, where $(\tau^1, \tau^2)$ is simultaneously a left
Type II DW point for $\phi$ with constant $M$ and a right Type II DW
point for $\psi$ with constant $1/M.$ Now, assume $(\sigma^1, \sigma^2)\in W(F).$ If either $\sigma^1\in\DD$ or $\sigma^2\in\DD,$ Corollary \ref{typeIhorosph} would imply that either $\psi$ is right Type I or $\phi$ is left Type I, respectively, a contradiction. Thus, $(\sigma^1, \sigma^2)\in\mathbb{T}^2$. But then, Theorems \ref{Juliarevisited} and \ref{TypeIIcharabridged} yield that $(\sigma^1, \sigma^2)$ is simultaneously a left Type II DW point for $\phi$ with constant $M'>0$ and a right Type II DW
point for $\psi$ with constant $1/M'$. In view of Theorem \ref{(Type II, Type II)}, we obtain $(\sigma^1, \sigma^2)=(\tau^1, \tau^2),$ hence $W(F)=\{(\tau^1, \tau^2)\}.$ \par 
Conversely, if $W(F)=\{(\tau^1, \tau^2)\}$, Corollary \ref{typeIhorosph} implies that $\phi$ cannot be a left Type I function and $\psi$ cannot be a right Type I function (else, $W(F)$ would also have to contain facial boundary points). Theorem \ref{130} then yields that $\phi$ is left Type II and $\psi$ is right Type II. \par 
Next, we prove (ii). We will only deal with the (Type I, Type II) version. First, assume that $\phi$ is left Type I and $\psi$ is right Type II, with $\tau^1$ being the common Denjoy-Wolff point of all functions $\phi_{\mu}.$ Corollary \ref{typeIhorosph} implies that $\{\tau^1\}\times \DD\subset W(F)$. If $W(F)=\{\tau^1\}\times \DD$, we are done. Otherwise, assume that we can find a different point $(\sigma^1, \sigma^2)\in W(F).$ We must have $\sigma^1\in\mathbb{T},$ else $\psi$ would be a right Type I function. Also, we may assume $\sigma^2\in\mathbb{T}$ (else we would have $\sigma^1=\tau^1$, in view of Corollary \ref{typeIhorosph}). Now, Theorem \ref{Juliarevisited} (specifically, the fact that (iii) implies (i)) yields that $(\sigma^1, \sigma^2)$ must be either a left Type I or a left Type II DW point for $\phi.$ Proposition \ref{TypeIuniqueness} then tells us that $\sigma^1=\tau^1.$ Note that $(\tau^1, \sigma^2)$ will have to be (in view of Theorem \ref{Juliarevisited}) a right Type II DW point for $\psi.$
Also, if $(t^1, t^2)\in W(F)$ is not contained in $\{\tau^1\}\times\DD$, our previous arguments show that $t^1=\tau^1$ and $(t^1, t^2)$ is, in addition, a right Type II DW point for $\psi$. Proposition \ref{TypeIIuniqueness} then implies $\sigma^2=\tau^2.$ We conclude that $\{\tau^1\}\times \DD\subset W(F)\subset\big(\{\tau^1\}\times\DD\big)\cup \{(\tau^1, \sigma^2)\}$, where $\sigma^2\in\mathbb{T}.$  \par 
Conversely, assume $\{\tau^1\}\times \DD\subset W(F)\subset\big(\{\tau^1\}\times\DD\big)\cup \{(\tau^1, \tau^2)\}$, where $\tau^1, \tau^2\in\mathbb{T}.$ Corollary \ref{typeIhorosph} then implies that $\phi$ is a left Type I and $\psi$ is a right Type II function (else, $W(F)$ would have to contain a face of the form $\DD\times\{\sigma^2\}$), as desired. \par 
Finally, the proof of (iii) rests on Corollary \ref{typeIhorosph}, Proposition \ref{TypeIuniqueness} and Theorem \ref{Juliarevisited}; one can argue in a manner analogous to the proof of (ii). We omit the details. 
\end{proof}
\begin{remark}
As seen in the previous proof, the point $(\tau^1, \tau^2)$ in (ii) will belong to W(F) if and only if it is a right Type II DW point for $\psi.$
\end{remark}

\large

 \par\textit{Acknowledgements}. The second author would like to thank John M\raise.5ex\hbox{c}Carthy and Greg Knese for helpful suggestions.

\printbibliography

\end{document}